\newcommand{\ad}[1]{\mathrm{ad}_{#1}}
\newcommand{\bt}{\mathfrak{b}}
\newcommand{\Cp}{\mathbb{C}}
\newcommand{\Fp}{\mathbb{F}}
\newcommand{\gt}{\mathfrak{g}}
\newcommand{\Np}{\mathbb{N}}
\newcommand{\nt}{\mathfrak{n}}
\newcommand{\Ou}{\mathcal{O}}
\newcommand{\wt}{\widetilde}
\DeclareMathOperator{\rk}{rk}
\DeclareMathOperator{\supp}{supp}
\title{Classification of coadjoint orbits for the maximal unipotent subgroup in the simple group of type $F_4$}
\authors{Matvey Surkov}
\abstract{%
   Let $N$ be the maximal unipotent subgroup of the simple algebraic group of type $\Phi$. It naturally acts on the space $\nt^*$ dual to the Lie algebra $\nt$ of $N$, and this action is called the coadjoint action. Such orbits play a key role in A.A. Kirillov's orbit method. In this work, we classify the orbits of this action in the case of $\Phi=F_4$ in terms of supports of canonical forms. This means that we will present a set $S$ of linear forms from $\nt^*$ such that for any coadjoint orbit there exists a unique form from $S$ belonging to that orbit. The set of canonical forms will be explicitly described in terms of supports. The support of a form $\lambda\in\nt^*$ is the set of positive roots $\alpha\in\Phi^+$ such that $\lambda(e_\alpha)\neq 0$.
    }
\keywords{Coadjoint orbit, the orbit method, root system.}
\begin{document}

\section{Introduction and the main result}

Let $\gt$ be a simple finite-dimensional Lie algebra over a finite field $\Fp_q$ of sufficiently large characteristic $p$, $\bt$ be a Borel subalgebra of $\gt$, $\Phi$ be the root system of $\gt$, $\Phi^+$ be the set of positive roots corresponding to $\bt$, $\nt$ be the nilradical of $\bt$, $N=\exp(\nt)$ be the corresponding unipotent algebraic group, and $\nt^*$ be the dual space of $\nt$. The group $N$ is also a maximal unipotent subgroup of the simple algebraic group $G$ of type $\Phi$, or equivalently a Sylow $p$-subgroup of $G$. The group $N$ acts on $\nt$ by the adjoint action; the dual action of $N$ on the space $\nt^*$ is called the coadjoint action; we will denote the result of this action by $g\cdot\lambda$ for $g\in N$, $\lambda\in\nt^*$. According to the orbit method introduced by A.A. Kirillov in 1962, coadjoint orbits play the key role in the representation theory of the group $N$. Almost all coadjoint orbits that have been studied well so far are so-called orbits associated with rook placements. They are discussed in detail in the works \cite{Andre95, AndreNeto06,  Kostant12, Kostant13, Panov08, IgnatyevPanov09, Ignatev09, Ignatev11, Ignatyev12, IgnatyevShevchenko21, IgnatyevSurkov23}.

Explicitly, the coadjoint action is given by:
\begin{equation*}
\begin{split}
(\exp(x)\cdot\lambda)(y)=\lambda(\exp(-\ad{x})(y))=\lambda(y) - \lambda([x, y]) + \dfrac {1}{2!}\lambda([x, [x, y]]) - \ldots,\\
x,y\in\nt,~\lambda\in\nt^*.
\end{split}
\end{equation*}

Let $\{e_\alpha, \alpha\in\Phi^+\}$ be a basis of $\nt$ consisting of root vectors, and let $\{e_\alpha^*, \alpha\in\Phi^+\}$ be the dual basis in $\nt^*$. The support of a linear form $\lambda\in\nt^*$ is defined as the subset of positive roots given by $$\supp(\lambda)=\{\alpha\in\Phi^+\colon\lambda(e_\alpha)\neq0\}.$$

Our goal is to describe the coadjoint orbits for the case of $\Phi=F_4$. The description will be given in terms of supports of canonical forms. A list of supports will be provided with the following properties.
\begin{itemize}
    \item For any two linear forms $\lambda_1\neq\lambda_2$ whose supports belong to the given list (even if the supports coincide), the orbits $N\cdot\lambda_1$ and $N\cdot\lambda_2$ are different.
    \item For any linear form $\lambda\in\nt^*$, there exists a form $\lambda'$ with a support from the given list such that $N\cdot\lambda = N\cdot\lambda'$.
\end{itemize}
In other words, each coadjoint orbit contains a unique linear form whose support belongs to the given list.

The set of simple roots of $F_4$ can be identified with the following subset of $\mathbb{R}^4$:
\begin{equation*}
\Delta=\{\alpha_1=\varepsilon_2-\varepsilon_3,~\alpha_2=\varepsilon_3-\varepsilon_4,~\alpha_3=\varepsilon_4,~\alpha_4=\frac{1}{2}(\varepsilon_1-\varepsilon_2-\varepsilon_3-\varepsilon_4)\}. \end{equation*}
Here $\{\varepsilon_i\}_{i=1}^4$ is the standard basis of $\mathbb{R}^4$ (endowed with the standard scalar product).

Recall that there exists a natural partial order on $\Phi^+$ defined as follows: $\alpha>\beta$ if $\alpha-\beta$ is a sum of positive roots. Let us fix a linear lexicographic order $\succ$ on the positive roots of $F_4$. We say that $\alpha = a_1\alpha_1+a_2\alpha_2+a_3\alpha_3+a_4\alpha_4\succ\beta = b_1\alpha_1+b_2\alpha_2+b_3\alpha_3+b_4\alpha_4$ if $a_1>b_1$ or $a_1=b_1, a_2>b_2$ or $a_1=b_1, a_2=b_2, a_3>b_3$ or $a_1=b_1, a_2=b_2, a_3=b_3, a_4>b_4$. Thus, the roots will be ordered as follows:
\begin{equation*}
\begin{split}
\Phi^+=\{&\alpha_4,~\alpha_3,~\alpha_3+\alpha_4,~\alpha_2,~\alpha_2+\alpha_3,~\alpha_2+\alpha_3+\alpha_4,~\alpha_2+2\alpha_3,~\alpha_2+2\alpha_3+\alpha_4,\\
&\alpha_2+2\alpha_3+2\alpha_4,~\alpha_1,~\alpha_1+\alpha_2,~\alpha_1+\alpha_2+\alpha_3,~\alpha_1+\alpha_2+\alpha_3+\alpha_4,~\alpha_1+\alpha_2+2\alpha_3,\\
&\alpha_1+\alpha_2+2\alpha_3+\alpha_4,~\alpha_1+\alpha_2+2\alpha_3+2\alpha_4,~\alpha_1+2\alpha_2+2\alpha_3,~\alpha_1+2\alpha_2+2\alpha_3+\alpha_4,\\
&\alpha_1+2\alpha_2+2\alpha_3+2\alpha_4,~\alpha_1+2\alpha_2+3\alpha_3+\alpha_4,~\alpha_1+2\alpha_2+3\alpha_3+2\alpha_4,\\
&\alpha_1+2\alpha_2+4\alpha_3+2\alpha_4,~\alpha_1+3\alpha_2+4\alpha_3+2\alpha_4,~2\alpha_1+3\alpha_2+4\alpha_3+2\alpha_4\}.
\end{split}
\end{equation*}
Clearly, $\alpha>\beta$ implies $\alpha\succ\beta$. Further, if we talk about order on positive roots, we will always mean the lexicographic order as above, unless otherwise stated.

Let $\lambda\in\nt^*$, $\gamma\in\Phi^+$. Consider the following matrices.
\begin{equation*}
\begin{split}
    A_{\lambda,\gamma}:=\left(\lambda([e_\alpha, e_\beta])\right)_{\alpha\in\Phi^+, \alpha\succeq\gamma, \beta\in\Phi^+}\cdot\\
    B_{\lambda,\gamma}:=\left(\lambda([e_\alpha, e_\beta])\right)_{\alpha\in\Phi^+, \alpha\succ\gamma, \beta\in\Phi^+}.
\end{split}
\end{equation*}
The columns of the matrices are numbered by all positive roots, and the rows are numbered by certain positive roots. The matrix $A_{\lambda,\gamma}$ is the matrix $B_{\lambda,\gamma}$ with one more row. Further, let $S$ be the set of $\lambda\in\nt^*$ for which the following condition holds: for any $\gamma\in \supp(\lambda)$,
\begin{equation}\label{rk}
\rk A_{\lambda,\gamma} = \rk B_{\lambda,\gamma}.
\end{equation}
Finally, we can formulate the main result of this work.
\begin{theorem}
    \label{list}
For any coadjoint orbit\textup{,} there exists a unique linear form $\lambda\in S$ lying on this orbit.
\end{theorem}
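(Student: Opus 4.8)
The plan is to prove the two required properties — existence of a canonical form in $S$ on each orbit, and uniqueness — by analyzing the coadjoint action one root at a time, processing the positive roots in decreasing lexicographic order. The key observation is that the matrices $A_{\lambda,\gamma}$ and $B_{\lambda,\gamma}$ encode exactly the infinitesimal directions in which the action of $N$ can move the $e_\gamma^*$-component of $\lambda$. Recall that for $x=\sum_\beta c_\beta e_\beta$ the leading term of $(\exp(x)\cdot\lambda)(e_\alpha)-\lambda(e_\alpha)$ is $-\lambda([x,e_\alpha])=-\sum_\beta c_\beta\lambda([e_\beta,e_\alpha])$, so the entries $\lambda([e_\alpha,e_\beta])$ govern how the component along $\alpha$ changes under the group action. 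The condition $\rk A_{\lambda,\gamma}=\rk B_{\lambda,\gamma}$ for every $\gamma\in\supp(\lambda)$ says precisely that the row indexed by $\gamma$ is linearly dependent on the rows indexed by roots strictly greater than $\gamma$; equivalently, there is no group direction that changes the $e_\gamma^*$-coefficient without also changing some higher coefficient.

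First I would set up the reduction procedure that proves existence. Given an arbitrary $\lambda_0\in\nt^*$, I process the positive roots $\gamma$ in decreasing order. At each step, if the current form $\lambda$ violates \eqref{rk} at $\gamma$ — i.e. $\rk A_{\lambda,\gamma}>\rk B_{\lambda,\gamma}$ — then the $\gamma$-row of $A_{\lambda,\gamma}$ is independent of the higher rows, which means there is an element $g\in N$ whose action kills the $e_\gamma^*$-component while leaving all strictly-higher components fixed (this is where one uses that $N$ is generated by one-parameter subgroups $\exp(t e_\beta)$ and that, in decreasing order, acting by $\exp(t e_\beta)$ only affects components along roots $\preceq$ the current one up to the choice of $\beta$). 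I would formalize this by an induction on the ordered set of roots: having made \eqref{rk} hold for all roots $\succ\gamma$, one applies a suitable group element to force $\gamma\notin\supp(\lambda)$ whenever the rank condition fails, without disturbing the part of the form already normalized. Iterating down to the smallest root produces a form $\lambda'\in S$ with $N\cdot\lambda'=N\cdot\lambda_0$.

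Next I would prove uniqueness. Suppose $\lambda_1,\lambda_2\in S$ lie on the same orbit, so $\lambda_2=g\cdot\lambda_1$ for some $g\in N$. Writing $g$ as a product of elementary one-parameter subgroups and comparing the two forms component-by-component in decreasing order of roots, the rank conditions satisfied by both forms force the relevant group parameters to vanish successively: if $\lambda_1$ and $\lambda_2$ first differ at some root $\gamma$, then the membership of both in $S$ makes the $\gamma$-row dependent on the higher rows for each form, which contradicts the existence of a nontrivial group motion producing that difference while fixing everything strictly above $\gamma$. Hence $\lambda_1=\lambda_2$. This is the heart of the argument and I expect it to require the most care, because one must track precisely how the triangular structure of the action interacts with the rank equalities.

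The main obstacle will be the control of the action's triangularity with respect to the chosen lexicographic order: although $\alpha>\beta$ implies $\alpha\succ\beta$, the linear order $\succ$ is finer than the root-sum partial order, so one must verify that acting by $\exp(t e_\beta)$ genuinely preserves all components strictly above the targeted root in the $\succ$-order, and that the higher-order terms in the exponential expansion do not spoil the rank-based normalization. Establishing this compatibility — essentially a careful bookkeeping of which structure constants $\lambda([e_\alpha,e_\beta])$ can be nonzero and how they cascade through the Baker–Campbell–Hausdorff expansion in $F_4$ — is the technical crux; for $F_4$ one may ultimately need to supplement the general argument with explicit verification against the $24$ positive roots listed above, since the existence of root strings of length up to four (reflected in coefficients as large as $4\alpha_3$) makes the higher commutators genuinely contribute.
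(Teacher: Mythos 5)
Your plan takes a genuinely different route from the paper, and it \emph{can} be completed, but as written it has a real gap at the load-bearing step, in both halves. In your uniqueness argument you pass from the infinitesimal statement (``the $\gamma$-row of $A_{\lambda,\gamma}$ depends linearly on the rows indexed by $\beta\succ\gamma$'', which is a statement about the tangent action $x\cdot\lambda$) to the global statement (``no element of $N$ can change the $e_\gamma^*$-coordinate while fixing all coordinates at roots $\succ\gamma$'') with no argument. Over a field of characteristic $p$ the relation between group-level stabilizers and Lie-algebra-level annihilators is exactly what has to be proved, and your closing suggestion that one may have to fall back on bookkeeping of structure constants and case-checking against the $24$ positive roots of $F_4$ shows the mechanism that bridges the two levels is missing. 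The same issue sits inside your existence step: acting by a one-parameter subgroup $\exp(te_\beta)$ along a single root vector does \emph{not} in general fix the coordinates above $\gamma$, because of the quadratic and higher terms of the exponential series, which you acknowledge but do not control.

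Both gaps close with two elementary observations, after which your proof becomes complete and uniform (no $F_4$-specific verification at all). Since $\alpha>\beta$ implies $\alpha\succ\beta$, the span $\mathfrak{m}_\gamma$ of the $e_\beta$ with $\beta\succ\gamma$ is an ideal of $\nt$; put $V_\gamma=\{x\in\nt:\lambda([x,e_\beta])=0 \text{ for all }\beta\succ\gamma\}$, i.e.\ the joint kernel of the rows of $B_{\lambda,\gamma}$. First: if $x\in V_\gamma$, then for every $\beta\succeq\gamma$ the first bracket $[x,e_\beta]$ already lies in $\mathfrak{m}_\gamma$, so every term of degree $\ge 2$ in the series for $(\exp(tx)\cdot\lambda)(e_\beta)$ is of the form $\lambda([x,z])$ with $z\in\mathfrak{m}_\gamma$ and vanishes \emph{identically}; hence $\exp(tx)$ fixes all coordinates at roots $\succ\gamma$ exactly and shifts $\lambda(e_\gamma)$ by $-t\,\lambda([x,e_\gamma])$. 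This is the correct normalizing element for your existence induction: when (\ref{rk}) fails at $\gamma$, pick $x_0\in V_\gamma$ with $\lambda([x_0,e_\gamma])\neq0$ and solve for $t$; since condition (\ref{rk}) at any $\gamma'\succ\gamma$ involves only the coordinates $\lambda(e_\delta)$ with $\delta\succ\gamma'$ (every entry of $A_{\lambda,\gamma'}$ is a multiple of some $\lambda(e_{\alpha+\beta})$ with $\alpha+\beta\succ\gamma'$), the part already normalized is untouched. Second: if $g=\exp(x)$ satisfies $(g\cdot\lambda)|_{\mathfrak{m}_\gamma}=\lambda|_{\mathfrak{m}_\gamma}$, then necessarily $x\in V_\gamma$. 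Indeed, $\exp(-\ad{x})-1=(-\ad{x})\circ u$ with $u=\sum_{k\ge0}(-\ad{x})^k/(k+1)!$; for $p$ large $u$ is well defined, unipotent, and preserves the ideal $\mathfrak{m}_\gamma$, so $u(\mathfrak{m}_\gamma)=\mathfrak{m}_\gamma$ and the stabilizer condition is equivalent to $\lambda([x,\mathfrak{m}_\gamma])=0$. Now your uniqueness argument runs: if $\lambda_1\neq\lambda_2$ lie in $S$ and $\lambda_2=g\cdot\lambda_1$, let $\gamma$ be the largest root where they differ; the matrices $A_{\lambda_i,\gamma}$, $B_{\lambda_i,\gamma}$ coincide for $i=1,2$, and $\gamma$ lies in at least one support, so (\ref{rk}) holds at $\gamma$, i.e.\ $\lambda_1([\cdot,e_\gamma])$ vanishes on $V_\gamma$; by the second observation $x=\log g\in V_\gamma$, and by the first, $\lambda_2(e_\gamma)=\lambda_1(e_\gamma)-\lambda_1([x,e_\gamma])=\lambda_1(e_\gamma)$, a contradiction.

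For comparison, the paper does neither of these things: it proves uniqueness via Theorem \ref{fun}, producing rational functions $F_{D,\gamma}$ with $\mu(e_\gamma)=\lambda(e_\gamma)+F_{D,\gamma}(\mu(e_\beta),\beta\succ\gamma)$ on the generic orbit element $\exp(\chi)\cdot\lambda$, established by a long case analysis over the list of supports, and it obtains existence not by normalization but by counting: $|S|$ over $\Fp_q$ is computed and matched against the orbit count of \cite{GoodwinMoschRohle16}. Your route, once repaired as above, is more elementary and self-contained than both of these, so the gap is worth closing carefully rather than by the case-by-case verification you anticipate.
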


This theorem generalizes the result of the work \cite{IgnatyevSurkov23}.  This article discusses the so-called orbits associated with rook placements. By definition, a subset $D$ of $\Phi^+$ is called a non-singular rook placement if $\alpha-\beta\notin \Phi^+$ for all distinct $\alpha,\beta\in D$. It follows that $(\alpha,\beta)\leq0$ for all distinct $\alpha,\beta\in D$. Given a rook placement $D$ and a map $\xi\colon D\to\Cp^{\times}$, we put
\begin{equation*}
f_{D,\xi}=\sum_{\alpha\in D}\xi(\alpha)e_{\alpha}^*\in\nt^*.
\end{equation*}
We say that the coadjoint orbit $\Omega_{D,\xi}$ of the linear form $f_{D,\xi}$ is associated with the rook placement $D$ and the map $\xi$. Thus, $f_{D,\xi}$ is a linear form with support $D$.

In \cite{IgnatyevSurkov23} it is proved that if $\Phi=F_4$, $D$ is an orthogonal non-singular rook placement and $\xi_1$, $\xi_2$ are distinct maps from $D$ to $\Cp^{\times}$ then $\Omega_{D,\xi_1}\neq\Omega_{D,\xi_2}$. Moreover, if $D$ is a non-singular rook placement, then $\lambda = f_{D,\xi}$ belongs to $S$. Indeed, if $\gamma\in D$, then $\gamma+\beta\notin D$ for all $\beta\in\Phi^+$ and $\lambda([e_\gamma,e_\beta])=0$ for all $\beta\in\Phi^+$. Hence, in the matrix $A_{\lambda,\gamma}$ the row corresponding to $\gamma$ is zero, and condition (\ref{rk}) is true for $\gamma\in D$. Therefore, distinct linear forms with the same support $D$ lie in different coadjoint orbits.

A result similar to Theorem \ref{list} holds for the root system $\Phi=G_2$ and field $\Cp$. In the same work \cite{IgnatyevSurkov23}, the equations defining the basic subvarieties $\Ou_{D,\xi}$ are given. By definition, the basic subvariety $\Ou_{D,\xi}$ corresponding to a rook placement $D$ and a map $\xi\colon D\to\Cp^{\times}$ is $$\Ou_{D,\xi}=\sum_{\alpha\in D}\Omega_{\{\alpha\},\xi_{\alpha}},$$ where $\xi_{\alpha}$ is the restriction of $\xi$ to $\{\alpha\}$. That work also proves André's stratification for $G_2$:
$$\nt^*=\bigsqcup_{D,\xi}\Ou_{D,\xi},$$ where the union is taken over all non-singular rook placements $D$ and all maps $\xi\colon D\to\Cp^{\times}$.

In this case $\Phi^+=\{ \alpha, \beta, \alpha+\beta, 2\alpha+\beta, 3\alpha+\beta, 3\alpha+2\beta \}$. Let us introduce the notation of structural constants as in the work \cite{IgnatyevSurkov23}:
\begin{gather*}
    e_\alpha, e_\beta] = c_1\cdot e_{\alpha+\beta}, \quad
    [e_\alpha, e_{\alpha+\beta}] = c_2\cdot e_{2\alpha+\beta}, \quad
    [e_\alpha, e_{2\alpha+\beta}] = c_3\cdot e_{3\alpha+\beta}, \\
    [e_{3\alpha+\beta}, e_\beta] = c_4\cdot e_{3\alpha+2\beta} \quad \text{and} \quad
    [e_{\alpha+\beta}, e_{2\alpha+\beta}] = c_5\cdot e_{3\alpha+2\beta}.
\end{gather*}

For almost every rook placement $D$, we have $\Ou_{D,\xi} = \Omega_{D,\xi}$. The only exception is the case $D=\{ \alpha+\beta, 3\alpha+\beta\}$. In this case $\Ou_{D,\xi}$ is the union of orbits $\Omega_{D,\xi}$ and $\Omega_{\{ \beta, \alpha+\beta, 3\alpha+\beta\},\xi'}$ where $\xi'\colon {\beta, \alpha+\beta, 3\alpha+\beta} \to \Cp^{\times}$ is such that its restriction to $D$ coincides with $\xi$. Let us write down the equations defining the orbits for $D = {\alpha+\beta, 3\alpha+\beta}$. A linear form $\lambda = \sum_{\gamma \in \Phi^+} \lambda_{\gamma} e_{\gamma}^*$ lies in $\Omega_{D,\xi}$ if and only if it satisfies the following system:
\begin{equation*}
    \begin{split}
        6c_3^2\lambda_{3\alpha+\beta}^2\lambda_\beta-6c_1c_3\lambda_{2\alpha+\beta}\lambda_{3\alpha+\beta}\lambda_{\alpha+\beta}+5c_1c_2\lambda_{2\alpha+\beta}^3={ }&{ }0,\\
        2c_3\lambda_{\alpha+\beta}\lambda_{3\alpha+\beta}-c_2\lambda_{2\alpha+\beta}^2={ }&{ }2c_3\xi(\alpha+\beta)\xi(3\alpha+\beta),\\
        \lambda_{3\alpha+\beta}={ }&{ }\xi(3\alpha+\beta),\\
        \lambda_{3\alpha+2\beta}={ }&{ }0.
    \end{split}
\end{equation*}
For a linear form $\lambda\in\Omega_{\{ \beta, \alpha+\beta, 3\alpha+\beta\},\xi'}$ we have the following system of
equations.
\begin{equation*}
    \begin{split}
        6c_3^2\lambda_{3\alpha+\beta}^2\lambda_\beta-6c_1c_3\lambda_{2\alpha+\beta}\lambda_{3\alpha+\beta}\lambda_{\alpha+\beta}+5c_1c_2\lambda_{2\alpha+\beta}^3={ }&{ } 6c_3^2(\xi'(3\alpha+\beta))^2\xi'(\beta),\\
        2c_3\lambda_{\alpha+\beta}\lambda_{3\alpha+\beta}-c_2\lambda_{2\alpha+\beta}^2={ }&{ }2c_3\xi'(\alpha+\beta)\xi'(3\alpha+\beta),\\
        \lambda_{3\alpha+\beta}={ }&{ }\xi'(3\alpha+\beta),\\
        \lambda_{3\alpha+2\beta}={ }&{ }0.
    \end{split}
\end{equation*}

In the case $\Phi=G_2$ the set $S$ consists of linear forms with supports from this list:
\begin{equation*}
\begin{split}
\emptyset, \{\alpha\}, \{\beta\}, \{\alpha,\beta\}, \{\alpha+\beta\}, \{2\alpha+\beta\}, \{2\alpha+\beta, \beta\}, \{3\alpha+\beta\},\\ \{3\alpha+\beta, \alpha+\beta\}, \{3\alpha+\beta, \alpha+\beta, \beta\},
\{3\alpha+\beta, \beta\}, \{3\alpha+2\beta\}, \{3\alpha+2\beta, \alpha\}.
\end{split}
\end{equation*}

The orbits of these forms correspond precisely to all the orbits whose equations are provided in \cite{IgnatyevSurkov23} and in the present work.

\section{Description of the set of canonical forms}

In this section, we will obtain an explicit description of the set $S$ to prove Theorem \ref{list} and to demonstrate its use. At first glance, it might seem that this set does not admit a nice description. In fact, this is not the case. The set $S$ can be described very neatly in terms of supports. A list of 880 supports is presented in the Appendix.

\begin{proposition}
    \label{S}
The set $S$ consists of all linear forms with supports from the list and forms with one of the three supports
\begin{equation*}
\begin{split}
    &\{ \alpha_1+\alpha_2+2\alpha_3,~\alpha_1+\alpha_2,~\alpha_2+2\alpha_3+\alpha_4, ~\alpha_2+\alpha_3+\alpha_4,~\alpha_2 \},\\
    &\{ \alpha_1+\alpha_2+2\alpha_3,~\alpha_1+\alpha_2,~\alpha_2+2\alpha_3+\alpha_4, ~\alpha_2+\alpha_3+\alpha_4,~\alpha_2,~\alpha_4 \},\\
    &\{ \alpha_1+\alpha_2+2\alpha_3,~\alpha_1+\alpha_2,~\alpha_2+2\alpha_3+\alpha_4, ~\alpha_2+\alpha_3+\alpha_4,~\alpha_4 \},
\end{split}
\end{equation*}
and of the forms with one of the three supports listed below, provided their coordinates satisfy the equation
\begin{equation*}
\lambda_{\alpha_1+\alpha_2}\lambda^2_{\alpha_2+2\alpha_3+\alpha_4} = \lambda_{\alpha_1+\alpha_2+2\alpha_3}\lambda^2_{\alpha_2+\alpha_3+\alpha_4}.
\end{equation*}
Here $\lambda = \sum_{\alpha\in\Phi^+} \lambda_\alpha e_\alpha^*$, and $\lambda_\alpha$ are the coordinates with respect to the dual basis.
\end{proposition}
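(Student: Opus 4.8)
The plan is to argue directly from the defining condition of $S$, which I first restate in a usable form. Writing the Chevalley structure constants as $[e_\alpha,e_\beta]=N_{\alpha,\beta}e_{\alpha+\beta}$ (with $N_{\alpha,\beta}=0$ when $\alpha+\beta\notin\Phi^+$), the $\alpha$-row of $A_{\lambda,\gamma}$ has $\beta$-entry $N_{\alpha,\beta}\lambda_{\alpha+\beta}$, so $\rk A_{\lambda,\gamma}=\rk B_{\lambda,\gamma}$ holds exactly when the $\gamma$-row lies in the linear span of the rows indexed by $\alpha\succ\gamma$. Dually, the condition fails at $\gamma$ precisely when there is a $y=\sum_\mu y_\mu e_\mu\in\nt$ with $\lambda([e_\alpha,y])=0$ for every $\alpha\succ\gamma$ but $\lambda([e_\gamma,y])\neq0$; that is, a right null vector of $B_{\lambda,\gamma}$ not orthogonal to the $\gamma$-row. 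The crucial structural point is that every entry $N_{\alpha,\beta}\lambda_{\alpha+\beta}$ of $A_{\lambda,\gamma}$ involves a coordinate $\lambda_{\alpha+\beta}$ with $\alpha+\beta\succ\alpha\succeq\gamma$; hence the condition at $\gamma$ depends only on the coordinates $\lambda_\delta$ with $\delta\succ\gamma$, and never on $\lambda_\gamma$ itself. This lets me organize the whole analysis by processing the roots from the top of the order downward.

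First I would dispose of the top: for the highest root $\gamma=2\alpha_1+3\alpha_2+4\alpha_3+2\alpha_4$ the $\gamma$-row is identically zero and there are no larger rows, so the condition is vacuous. Proceeding downward, each condition is a rank equality for an explicit matrix whose nonzero entries are the $N_{\alpha,\beta}\lambda_{\alpha+\beta}$. For a fixed support $D$ and a fixed $\gamma\in D$ I would isolate the nonzero rows and columns (those $\alpha\succeq\gamma$ and $\beta$ with $\alpha+\beta\in D$), reducing the test to a small submatrix determined by the combinatorics of $F_4$. The outcome is one of three types: the $\gamma$-row is forced into the span of the higher rows for all choices of the nonzero coordinates (the condition holds identically); the row can never lie in that span under the support constraints (so no form with support $D$ belongs to $S$); or the row lies in the span only on a proper subvariety cut out by a polynomial in the $\lambda_\delta$, $\delta\succ\gamma$. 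Assembling the results over all $\gamma\in D$ gives, for each admissible support, either unconditional membership (the $880$ supports together with the three supports displayed in the statement) or membership subject to an explicit equation.

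The heart of the argument, and the main obstacle, is the small family of supports on which the rank test genuinely degenerates. On these, the right null space of $B_{\lambda,\gamma}$ contains a distinguished vector whose pairing with the $\gamma$-row is a nonzero polynomial in the higher coordinates; requiring that pairing to vanish — exactly the condition that the $\gamma$-row join the row span — produces, after choosing the null vector compatibly with the $F_4$ structure constants, the cubic
\[
\lambda_{\alpha_1+\alpha_2}\lambda^2_{\alpha_2+2\alpha_3+\alpha_4}=\lambda_{\alpha_1+\alpha_2+2\alpha_3}\lambda^2_{\alpha_2+\alpha_3+\alpha_4}.
\]
The square on $\lambda_{\alpha_2+2\alpha_3+\alpha_4}$ reflects that this coordinate enters both as a matrix entry and a second time through the elimination of the expansion coefficients of the $\gamma$-row, and keeping track of its two appearances with the correct constants is the delicate part. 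I expect all six exceptional supports to contain the four roots occurring in the cubic and to differ only in the lower roots they carry, so that it is precisely this local difference near the critical $\gamma$ that decides whether the test is vacuous (the three $T$-supports) or is the displayed equation (the three $U$-supports).

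Finally I would confirm that no support outside the stated families yields a member of $S$: for every other $D$ some $\gamma\in D$ fails its rank test identically under the support constraints. Since $F_4$ has $24$ positive roots the enumeration is large, so I would organize it by the position of $\gamma$ in the order and by the local shape of $D$ near $\gamma$, and verify the mechanical rank computations with computer algebra from the explicit $N_{\alpha,\beta}$, reserving human care only for the three exceptional cubics. This description of $S$ is consistent with the earlier remarks: for a non-singular rook placement $D$ every $\gamma\in D$ gives a zero $\gamma$-row, so such supports fall automatically into the unconditional part of the list.
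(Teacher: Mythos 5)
Your opening reductions are sound and coincide with the paper's: condition (\ref{rk}) is equivalent to the $\gamma$-row of $A_{\lambda,\gamma}$ lying in the row span of $B_{\lambda,\gamma}$, and since every entry in a row $\alpha\succeq\gamma$ has the form $N_{\alpha,\beta}\lambda_{\alpha+\beta}$ with $\alpha+\beta\succ\gamma$, the test at $\gamma$ depends only on the part of the support above $\gamma$ — exactly the observation the paper uses to prune its top-down enumeration. Your sketch of where the cubic comes from (fixing the expansion coefficients by two of the variables and forcing the coefficient of the third to vanish, which makes $\lambda_{\alpha_2+2\alpha_3+\alpha_4}$ enter twice) is broadly consistent with the computation in Example \ref{exam1}, and your closing remark about non-singular rook placements is correct.

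The genuine gap is that the entire content of Proposition \ref{S} is located in the step you outsource and never perform. Deciding, for each support $D$ and each $\gamma\in D$, for which values of the nonzero coordinates the $\gamma$-row lies in the higher row span is not a single rank computation but a stratification problem over the parameters, and you give no mechanism for carrying it out across the enumeration, nor any of its results. This is precisely what the paper supplies and you omit: two criteria that depend on the support alone — the sufficient condition of Proposition \ref{suf}, built from the sets $\zeta_{D,\gamma}$, and the necessary condition of Proposition \ref{nec}, built from chains $\delta_i,\varepsilon_i$ — which reduce almost the whole enumeration to parameter-free combinatorics and leave only $33$ supports for explicit parametric analysis. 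Without either those criteria or the executed computation, your trichotomy (holds identically / fails identically / holds on a subvariety) merely restates what must be proved. Moreover, your one concrete structural prediction is false: the hand analysis in the paper produces exactly three exceptional supports, namely the displayed ones, and they are the \emph{conditional} ones; the two additional unconditional supports it uncovers, $\{\alpha_1+\alpha_2+2\alpha_3,\,\alpha_1+\alpha_2+\alpha_3+\alpha_4,\,\alpha_2+2\alpha_3+\alpha_4,\,\alpha_2+\alpha_3,\,\alpha_2\}$ and $\{\alpha_1+\alpha_2,\,\alpha_2+2\alpha_3,\,\alpha_2+\alpha_3+\alpha_4,\,\alpha_3+\alpha_4,\,\alpha_4\}$, are absorbed into the list of $880$ and do \emph{not} contain the four roots of the cubic, so there is no family of six exceptional supports differing only in roots below the critical $\gamma$. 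Your misreading is partly excused by the statement's own redundant wording, but it confirms that the classification was asserted rather than derived.
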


This section is devoted to the proof of this proposition.

In what follows, we will use the following table. It is a $24\times24$ table whose rows and columns are indexed by the positive roots. In the cell at the intersection of row $\alpha$ and column $\beta$, we write $\beta-\alpha$ if $\beta-\alpha$ can be expressed as a sum of positive roots; otherwise, the cell is left blank. The main diagonal contains zeros. It is easy to see that all cells below the diagonal are empty. Below we show the upper-left corner of this table, corresponding to those positive roots whose simple root expansion does not contain $\alpha_1$. A cell at position $(\alpha,\beta)$ is highlighted in orange if the difference $\beta-\alpha$ is itself a positive root. Almost all examples will involve specifically this part of the table. Four columns are highlighted in blue, as they will appear most frequently in the examples.

\begin{center}
    \textbf{Table 1}
\end{center}
{\tiny

\newcolumntype{c}{>{\columncolor[HTML]{b8e4ff}} l}

\begin{longtable}{||l||c|l|c|c|l|l|c|l|l||}
\hline
$e\backslash e^*$&$\alpha_4$&$\alpha_3$&$\alpha_3+\alpha_4$&$\alpha_2$&$\alpha_2+\alpha_3$&
$\alpha_2+\alpha_3+\alpha_4$&$\alpha_2+2\alpha_3$&$\alpha_2+2\alpha_3+\alpha_4$&$\alpha_2+2\alpha_3+2\alpha_4$\\
\hline \hline
$\alpha_4$&$\cellcolor[HTML]{C0C0C0}0$& &$\cellcolor[HTML]{ffa500}\alpha_3$& & &$\cellcolor[HTML]{ffa500}\alpha_2+\alpha_3$& &$\cellcolor[HTML]{ffa500}\alpha_2+2\alpha_3$&$\cellcolor[HTML]{ffa500}\alpha_2+2\alpha_3+\alpha_4$\\
\hline
$\alpha_3$& &$\cellcolor[HTML]{C0C0C0}0$&$\cellcolor[HTML]{ffa500}\alpha_4$& &$\cellcolor[HTML]{ffa500}\alpha_2$&$\alpha_2+\alpha_4$&$\cellcolor[HTML]{ffa500}\alpha_2+\alpha_3$&$\cellcolor[HTML]{ffa500}\alpha_2+\alpha_3+\alpha_4$&$\alpha_2+\alpha_3+2\alpha_4$\\
\hline
$\alpha_3+\alpha_4$& & & $\cellcolor[HTML]{C0C0C0}0$& & &$\cellcolor[HTML]{ffa500}\alpha_2$& &$\cellcolor[HTML]{ffa500}\alpha_2+\alpha_3$&$\cellcolor[HTML]{ffa500}\alpha_2+\alpha_3+\alpha_4$\\
\hline
$\alpha_2$& & & &$\cellcolor[HTML]{C0C0C0}0$&$\cellcolor[HTML]{ffa500}\alpha_3$&$\cellcolor[HTML]{ffa500}\alpha_3+\alpha_4$&$2\alpha_3$&$2\alpha_3+\alpha_4$&$2\alpha_3+2\alpha_4$\\
\hline
$\alpha_2+\alpha_3$& & & & &$\cellcolor[HTML]{C0C0C0}0$&$\cellcolor[HTML]{ffa500}\alpha_4$&$\cellcolor[HTML]{ffa500}\alpha_3$&$\cellcolor[HTML]{ffa500}\alpha_3+\alpha_4$&$\alpha_3+2\alpha_4$\\
\hline
$\alpha_2+\alpha_3+\alpha_4$& & & & & &$\cellcolor[HTML]{C0C0C0}0$& &$\cellcolor[HTML]{ffa500}\alpha_3$&$\cellcolor[HTML]{ffa500}\alpha_3+\alpha_4$\\
\hline
$\alpha_2+2\alpha_3$& & & & & & &$\cellcolor[HTML]{C0C0C0}0$&$\cellcolor[HTML]{ffa500}\alpha_4$&$2\alpha_4$\\
\hline
$\alpha_2+2\alpha_3+\alpha_4$& & & & & & & &$\cellcolor[HTML]{C0C0C0}0$&$\cellcolor[HTML]{ffa500}\alpha_4$\\
\hline
$\alpha_2+2\alpha_3+2\alpha_4$& & & & & & & & &$\cellcolor[HTML]{C0C0C0}0$\\
\hline
\end{longtable}
}

To describe the set $S$, we wrote a program, which is included in the Appendix. The program outputs supports that satisfy a sufficient condition for being included in the list, and supports that satisfy a necessary condition; these conditions will be described below.

First, let us reformulate condition (\ref{rk}). The algebra $\nt$ acts on $\nt^*$ as follows: for $x,y\in\nt$ and $\lambda\in\nt^*$, define $(x\cdot\lambda)(y) := -\lambda([x,y])$. This action is called tangent.

Let us expand $x\in\nt$ in the chosen basis: $$x = \sum_{\alpha\in\Phi^+}x_\alpha e_\alpha.$$ Let $\lambda\in\nt^*$, $\beta\in\Phi^+$. Consider the coordinates of $x\cdot\lambda$ in the dual basis: $$(x\cdot\lambda)(e_\beta) = -\lambda([\sum_{\alpha\in\Phi^+}x_\alpha e_\alpha, e_\beta]) = -\sum_{\alpha\in\Phi^+}\lambda([e_\alpha,e_\beta]) x_\alpha.$$ We can view these coordinates as linear combinations of variables $x_\alpha, \alpha\in\Phi^+$, with constant coefficients $-\lambda([e_\alpha,e_\beta])$. By definition, $-A_{\lambda, \gamma}$ is the matrix of linear combinations $\{(x\cdot\lambda)(e_\alpha)\}_{\alpha\succeq\gamma}$, and $-B_{\lambda, \gamma}$ is the matrix of linear combinations $\{(x\cdot\lambda)(e_\alpha)\}_{\alpha\succ\gamma}$. Clearly, $\rk A_{\lambda, \gamma} = \rk B_{\lambda, \gamma}$ if and only if the row numbered $\gamma$ of the matrix $A_{\lambda, \gamma}$ is a linear combination of all its other rows (i.e., the rows of the matrix $B_{\lambda, \gamma}$). In other words, $(x\cdot\lambda)(e_\gamma)$ can be expressed as a linear combination of $\{(x\cdot\lambda)(e_\alpha)\}_{\alpha\succ\gamma}$.

\begin{example}
    \label{ex}
Let $D=\supp(\lambda)=\{\alpha_4,\alpha_3+\alpha_4,\alpha_2,\alpha_2+2\alpha_3\}$. We will show that such a $\lambda$ lies in $S$. To visualize what the orbit of the tangent action looks like, let us refer to the table. The highlighted blue columns correspond to the roots in $D$. The linear combination $(x\cdot\lambda)(e_\beta)$ contains only those $x_\alpha$ for which $\lambda([e_\alpha,e_\beta])\neq0$, which is equivalent to $\alpha+\beta\in D$. It can be seen from the construction of the table that $(x\cdot\lambda)(e_\beta)$ contains only $x_\alpha$ such that the orange root $\alpha$ is in the row corresponding to $\beta$ in the highlighted column. For example, the combination $(x\cdot\lambda)(e_{\alpha_3})$ contains only $x_{\alpha_4}$ and $x_{\alpha_2+\alpha_3}$.

Now let us check condition (\ref{rk}) for all roots from $D$. As you can see from the table, $(x\cdot\lambda)(e_{\alpha_2+2\alpha_3}) = 0$. Naturally, this will always be the case for the maximal root in the support. Note that $(x\cdot\lambda)(e_{\alpha_2+\alpha_3})$ contains only $x_{\alpha_3}$. Further, $(x\cdot\lambda)(e_{\alpha_2}) = 0$, because there are no orange cells in the blue columns in the row $\alpha_2$; $(x\cdot\lambda)(e_{\alpha_3+\alpha_4}) = 0$ for similar reasons. Finally, $(x\cdot\lambda)(e_{\alpha_4})$ contains only $x_{\alpha_3}$, which means that it is a scalar multiple of $(x\cdot\lambda)(e_{\alpha_2+\alpha_3})$.

It is clear that we did not need to know the coefficients for $x_{\alpha_3}$ in either $(x\cdot\lambda)(e_{\alpha_2+\alpha_3})$ or $(x\cdot\lambda)(e_{\alpha_4})$. This is simply because $x_{\alpha_3}$ as a variable can be expressed in terms of $(x\cdot\lambda)(e_{\alpha_2+\alpha_3})$, and everywhere above (i.e., in $(x\cdot\lambda)(e_{\alpha_2})$, $(x\cdot\lambda)(e_{\alpha_3+\alpha_4})$, $(x\cdot\lambda)(e_{\alpha_3})$, $(x\cdot\lambda)(e_{\alpha_4})$) $x_{\alpha_3}$ can be expressed as $(x\cdot\lambda)(e_{\alpha_2+\alpha_3})$ multiplied by some coefficient. The Sufficient condition below is based on this idea.
\end{example}

Consider the tangent action $\nt\curvearrowright\nt^*$. Let $\gamma\in\Phi^+$, $\lambda\in\nt^*$, $D = \supp(\lambda)$. Starting from the support $D$ and the root $\gamma$, we will construct a set $\zeta_{D,\gamma}$ of positive roots. We will add roots to $\zeta_{D,\gamma}$ one by one. We add a root $\delta$ to $\zeta_{D,\gamma}$ if there exists a root $\beta\succ\gamma$ such that $\lambda([e_\delta, e_\beta])\neq0$ and if $\lambda([e_\alpha, e_\beta])\neq0$ for some other $\alpha\in\Phi^+$, then $\alpha\in\zeta_{D,\gamma}$. Using this procedure, we will add to $\zeta_{D,\gamma}$ the maximum possible number of roots. Note that $\zeta_{D,\gamma}$ does not depend on the specific values of the coordinates of $\lambda$ in the dual basis, but only on its support $D$. The set $\zeta_{D,\gamma}$ also does not depend on the order in which we add the roots. Indeed, define sets $R_n$ for $n\in\Np$. A root $\delta$ lies in $R_n$ if there exists a root $\beta\succ\gamma$ such that $\lambda([e_\delta, e_\beta])\neq0$ and if $\lambda([e_\alpha, e_\beta])\neq0$ for some other $\alpha\in\Phi^+$, then $\alpha\in R_i$ for some $i<n$. Clearly, we must add all roots from $\bigcup_{n\in\Np} R_n$ to $\zeta_{D,\gamma}$, and we cannot add any other roots.

\begin{proposition}\textup{(Sufficient condition)}\label{suf}
Let $\lambda\in\mathfrak{n}^*$, $D = \operatorname{supp}(\lambda)$. Suppose that for every root $\gamma\in D$, the following condition holds: if $\lambda([e_\alpha, e_\gamma])\neq0$, then $\alpha\in\zeta_{D,\gamma}$. Then $\lambda\in S$\end{proposition}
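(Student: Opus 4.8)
The plan is to show that the hypothesis on $\zeta_{D,\gamma}$ lets us express the distinguished coordinate $(x\cdot\lambda)(e_\gamma)$ as a linear combination of the higher coordinates $\{(x\cdot\lambda)(e_\beta)\}_{\beta\succ\gamma}$, which by the reformulation of condition (\ref{rk}) immediately after Example~\ref{ex} is exactly what it means for $\gamma$ to satisfy $\rk A_{\lambda,\gamma}=\rk B_{\lambda,\gamma}$. Since this must hold for every $\gamma\in\supp(\lambda)$, establishing it root-by-root proves $\lambda\in S$. So I would fix an arbitrary $\gamma\in D$ and work with the set $\zeta_{D,\gamma}$ built by the inductive procedure via the strata $R_n$.

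First I would unwind the meaning of $\zeta_{D,\gamma}$ in terms of the variables $x_\alpha$. The coordinate $(x\cdot\lambda)(e_\beta)=-\sum_\alpha\lambda([e_\alpha,e_\beta])x_\alpha$ is a linear form in the $x_\alpha$, and the defining rule for adding $\delta$ to $\zeta_{D,\gamma}$ says: there is some $\beta\succ\gamma$ whose associated linear form $(x\cdot\lambda)(e_\beta)$ involves $x_\delta$ and involves no variables $x_\alpha$ other than those with $\alpha$ already in $\zeta_{D,\gamma}$. I would argue by induction on $n$ that for every $\delta\in R_n$ the variable $x_\delta$ can be written as a linear combination of $\{(x\cdot\lambda)(e_\beta)\}_{\beta\succ\gamma}$ together with variables $x_\alpha$ for $\alpha$ in strata $R_i$ with $i<n$; feeding the inductive hypothesis back in, each such $x_\delta$ is a linear combination of the higher coordinates alone. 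Concretely, for $\delta\in R_n$ the witnessing $\beta$ gives $(x\cdot\lambda)(e_\beta)=-\lambda([e_\delta,e_\beta])x_\delta-\sum_{\alpha\in R_{<n}}\lambda([e_\alpha,e_\beta])x_\alpha$ with the coefficient of $x_\delta$ nonzero, so solving for $x_\delta$ and substituting the inductive expressions for the $x_\alpha$ yields $x_\delta$ as a combination of $\{(x\cdot\lambda)(e_\beta)\}_{\beta\succ\gamma}$. Hence every $x_\alpha$ with $\alpha\in\zeta_{D,\gamma}$ lies in the span of the higher coordinates.

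To close the argument I would examine the row indexed by $\gamma$ itself, namely $(x\cdot\lambda)(e_\gamma)=-\sum_\alpha\lambda([e_\alpha,e_\gamma])x_\alpha$. The hypothesis of the proposition is precisely that $\lambda([e_\alpha,e_\gamma])\neq0$ forces $\alpha\in\zeta_{D,\gamma}$, so the only variables appearing in $(x\cdot\lambda)(e_\gamma)$ are those $x_\alpha$ with $\alpha\in\zeta_{D,\gamma}$. By the previous paragraph each such $x_\alpha$ is a linear combination of $\{(x\cdot\lambda)(e_\beta)\}_{\beta\succ\gamma}$, and therefore so is $(x\cdot\lambda)(e_\gamma)$. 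This is the sought linear dependence, so $\rk A_{\lambda,\gamma}=\rk B_{\lambda,\gamma}$ holds at $\gamma$.

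The main obstacle I anticipate is bookkeeping rather than a conceptual gap: one must be careful that the coefficients appearing when solving for $x_\delta$ are genuinely constants (they are, being the fixed values $-\lambda([e_\alpha,e_\beta])$), and that the inductive substitution terminates, which follows because the strata $R_n$ are built from strictly smaller indices and $\zeta_{D,\gamma}=\bigcup_n R_n$ is finite. A secondary subtlety is to make sure the witnessing $\beta$ can always be chosen with $\beta\succ\gamma$ strictly, so that the resulting expressions involve only rows of $B_{\lambda,\gamma}$ and never the row $\gamma$ itself; this is exactly how the set $\zeta_{D,\gamma}$ was defined, so no circularity arises. Once these points are checked, the linear-algebra reduction gives the claim directly.
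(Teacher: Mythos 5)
Your proposal is correct and follows essentially the same argument as the paper: both express each $x_\delta$ with $\delta\in\zeta_{D,\gamma}$ as a linear combination of the higher coordinates $\{(x\cdot\lambda)(e_\beta)\}_{\beta\succ\gamma}$ by solving for $x_\delta$ at its witnessing row $\beta$, then use the hypothesis to conclude that $(x\cdot\lambda)(e_\gamma)$ lies in the span of those coordinates, which is condition (\ref{rk}) at $\gamma$. The only cosmetic difference is that you induct on the stratum index $n$ of the sets $R_n$, while the paper inducts on the order in which roots are added to $\zeta_{D,\gamma}$; the paper itself observes these are equivalent descriptions of the same set.
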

\begin{proof}
Fix a root $\gamma\in D$. Consider the orbit of the infinitesimal action $\mathfrak{n}\cdot\lambda$. Let $\mu\in\mathfrak{n}\cdot\lambda$, $\mu = x\cdot\lambda$, where $x = \sum_{\alpha\in\Phi^+}x_\alpha e_\alpha$. Here $\mu$ is given, while the values $x_\alpha$ are unknown.

We will show by induction on $|\zeta_{D,\gamma}|$ that if $\delta\in\zeta_{D,\gamma}$, then $x_\delta$ can be expressed as a linear combination of $\{\mu(e_\beta)\}_{\beta\succ\gamma}$. Consider the step of adding $\delta$ to $\zeta_{D,\gamma}$. Suppose we have already proved this for all roots that are already in $\zeta_{D,\gamma}$. There exists a root $\beta\succ\gamma$ such that $\delta$ is the unique root not in $\zeta_{D,\gamma}$ satisfying $\lambda([e_\delta, e_\beta])\neq0$. Consider $\mu(e_\beta) = -\sum_{\alpha\in\Phi^+}\lambda([e_\alpha, e_\beta])x_\alpha$. This can be rewritten as: $$\lambda([e_\delta, e_\beta])x_\delta = -\mu(e_\beta)-\sum_{\alpha\neq\delta}\lambda([e_\alpha, e_\beta])x_\alpha.$$ We claim that the right-hand side is a linear combination of $\{\mu(e_{\beta'})\}_{\beta'\succ\gamma}$. Indeed, if the coefficient of $x\alpha$ on the right-hand side is non-zero (i.e., $\lambda([e_\alpha, e_\beta])\neq 0$), then $\alpha\in\zeta_{D,\gamma}$. By the inductive hypothesis, $x_{\alpha}$ can be expressed as a linear combination of $\{\mu(e_{\beta'})\}_{\beta'\succ\gamma}$. Since $\lambda([e_\delta, e_\beta])\neq 0$, it follows that $x_\delta$ can also be expressed as a linear combination of $\{\mu(e_{\beta'})\}_{\beta'\succ\gamma}$.

Now consider $\mu(e_\gamma) = -\sum_{\alpha\in\Phi^+}\lambda([e_\alpha, e_\gamma])x_\alpha$. By the hypothesis of the proposition, if $\lambda([e_\alpha,e_\gamma])\neq 0$, then $x_\alpha$ can be expressed as a linear combination of $\{\mu(e_{\beta'})\}_{\beta'\succ\gamma}$. Consequently, the same holds for $\mu(e_\gamma)$, which is equivalent to condition (\ref{rk}) for the root $\gamma$. The result follows.
\end{proof}

Note that if we take another form $\lambda'$ with support $D$, then the conditions of Proposition~\ref{suf} will also be satisfied for it. Therefore, for a given support $D$, either all forms with this support satisfy the conditions of Proposition \ref{suf}, or none do. Hence, it suffices to check the conditions for a single form with a given support, for example, for the form with all coordinates equal to $1$ at the basis vectors $e_\alpha^*$ with $\alpha\in D$.

\begin{proposition}\textup{(Necessary condition)}\label{nec}
Let $\lambda\in\nt^*$\textup, $D = \supp(\lambda)$\textup, $\gamma\in D$. Suppose that there exist positive roots $\delta_1, \delta_2, \dots \delta_k, \varepsilon_1, \varepsilon_2, \dots \varepsilon_k$ \textup(not necessarily distinct\textup) satisfying the conditions\textup:
\begin{itemize}
    \item[\textup1.] $\delta_1=\gamma$ and $\delta_i\succ\gamma$ for all $i=\overline{2,k}$\textup;
    \item[\textup2.] $\lambda([e_{\delta_i}, e_\beta])\neq0$ and $\beta\notin\zeta_{D,\gamma} \iff \beta = \varepsilon_{i-1}$ or $\beta = \varepsilon_i$ for all $i=\overline{1,k-1}$\textup;
    \item[\textup3.] $\lambda([e_\alpha, e_{\varepsilon_i}])\neq0$ and $\alpha\succeq\gamma \iff \alpha = \delta_i$ or $\alpha = \delta_{i+1}$ for all $i=\overline{1,k}$\textup;
    \item[\textup4.] $\lambda([e_{\delta_k}, e_{\varepsilon_k}])\neq0$ and $\varepsilon_k\notin\zeta_{D,\gamma}$.
\end{itemize}
Then $\lambda\notin S$.
\end{proposition}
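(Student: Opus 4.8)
The plan is to prove this directly by exhibiting that condition~(\ref{rk}) fails at the single root $\gamma=\delta_1\in\supp(\lambda)$; since membership in $S$ requires (\ref{rk}) at \emph{every} $\gamma\in\supp(\lambda)$, one failure already gives $\lambda\notin S$. By the reformulation preceding Proposition~\ref{suf}, it suffices to show that the row of $A_{\lambda,\gamma}$ indexed by $\gamma$ does \emph{not} lie in the row span of $B_{\lambda,\gamma}$, equivalently that the functional $x\mapsto(x\cdot\lambda)(e_\gamma)$ is not a linear combination of $\{x\mapsto(x\cdot\lambda)(e_\alpha)\}_{\alpha\succ\gamma}$. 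Recall that the coefficient of $x_\beta$ in $(x\cdot\lambda)(e_\alpha)$ is $\lambda([e_\alpha,e_\beta])$ up to sign, so row $\alpha$ of $A_{\lambda,\gamma}$, read in the columns $\beta=\varepsilon_1,\dots,\varepsilon_k$, records exactly the numbers $\lambda([e_\alpha,e_{\varepsilon_i}])$.

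First I would isolate the $k$ columns indexed by $\varepsilon_1,\dots,\varepsilon_k$ and describe the submatrix they cut from the rows $\alpha\succeq\gamma$. By the contrapositive of condition~3 (reading $\delta_{k+1}$ as absent, so that column $\varepsilon_k$ meets only $\delta_k$), every row $\alpha\succeq\gamma$ with $\alpha\notin\{\delta_i,\delta_{i+1}\}$ is zero in column $\varepsilon_i$; hence among the rows $\succeq\gamma$ only $\delta_1,\dots,\delta_k$ survive on these columns, and they form a staircase: $\delta_1=\gamma$ is supported only on column $\varepsilon_1$, while for $j\ge2$ the row $\delta_j$ is supported on columns $\varepsilon_{j-1}$ and $\varepsilon_j$, with top entry $\lambda([e_{\delta_j},e_{\varepsilon_j}])\neq0$ (the entries $\lambda([e_{\delta_i},e_{\varepsilon_i}])$ are nonzero by condition~3, and $\lambda([e_{\delta_k},e_{\varepsilon_k}])\neq0$ by condition~4). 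Condition~1 guarantees $\delta_1\prec\delta_j$ for $j\ge2$, so $\gamma$ coincides with no later $\delta_j$; this is precisely what keeps the row $\gamma$ confined to the single column $\varepsilon_1$.

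Then I would run a back-substitution. Suppose for contradiction that row $\gamma$ equals $\sum_{\alpha\succ\gamma}c_\alpha\,(\text{row }\alpha)$. Since every row $\succ\gamma$ other than $\delta_2,\dots,\delta_k$ vanishes on the columns $\varepsilon_1,\dots,\varepsilon_k$, comparing $\varepsilon_i$-entries yields $k$ scalar equations in the unknowns $c_{\delta_2},\dots,c_{\delta_k}$. For $k\ge2$ I read them downward: the $\varepsilon_k$-entry of row $\gamma$ is zero (as $\gamma=\delta_1\prec\delta_k$), while the right-hand side is $c_{\delta_k}\lambda([e_{\delta_k},e_{\varepsilon_k}])$ with nonzero coefficient, forcing $c_{\delta_k}=0$; substituting into the $\varepsilon_{k-1}$-equation forces $c_{\delta_{k-1}}=0$, and so on down to $c_{\delta_2}=0$. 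In every case the $\varepsilon_1$-equation then reads $\lambda([e_\gamma,e_{\varepsilon_1}])=c_{\delta_2}\lambda([e_{\delta_2},e_{\varepsilon_1}])$, whose right-hand side is $0$ (and is absent altogether when $k=1$), contradicting $\lambda([e_\gamma,e_{\varepsilon_1}])\neq0$. Hence no such coefficients exist, so $\rk A_{\lambda,\gamma}>\rk B_{\lambda,\gamma}$ and $\lambda\notin S$.

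I expect the main obstacle to be bookkeeping rather than concept: making the staircase description airtight when the $\delta_i$ and $\varepsilon_i$ are not distinct, since then several of the rows $\delta_j$ coincide and the ``diagonal'' entries must be reinterpreted. Organizing the elimination strictly downward, from $\varepsilon_k$ to $\varepsilon_1$, is what makes the argument robust to such coincidences, because at each stage only the single surviving top entry of the current column is used. I would also note that conditions~1, 3, and 4 alone drive this rank argument; condition~2 together with the constraint $\varepsilon_k\notin\zeta_{D,\gamma}$ (and the use of $\zeta_{D,\gamma}$ generally) serve to situate the obstructing path outside the pivotable set $\zeta_{D,\gamma}$, dovetailing with Proposition~\ref{suf}, and they would be the natural ingredients of a dual argument that instead produces an explicit witness $x$ with $(x\cdot\lambda)(e_\alpha)=0$ for all $\alpha\succ\gamma$ but $(x\cdot\lambda)(e_\gamma)\neq0$.
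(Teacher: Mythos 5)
Your proof is correct, but it follows a genuinely different route from the paper's. The paper argues constructively in the dual direction: given $\mu$ in the tangent orbit and an arbitrary target value $A$, it produces $\mu'=x\cdot\lambda$ with $\mu'(e_\beta)=\mu(e_\beta)$ for all $\beta\succ\gamma$ and $\mu'(e_\gamma)=A$, solving for $x_{\varepsilon_1},\dots,x_{\varepsilon_k}$ one by one along the chain $\delta_1,\dots,\delta_k$; that is exactly the ``dual argument'' you sketch at the end, and it is where condition 2 and the clauses $\beta\notin\zeta_{D,\gamma}$, $\varepsilon_k\notin\zeta_{D,\gamma}$ are genuinely needed (they ensure that in the equation for $\mu'(e_{\delta_i})$ every variable other than $x_{\varepsilon_{i-1}}$, $x_{\varepsilon_i}$ is already pinned down, either through $\zeta_{D,\gamma}$ as in Proposition \ref{suf} or by copying the coordinates of $\mu$). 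You instead work directly with the rows of $A_{\lambda,\gamma}$ restricted to the columns $\varepsilon_1,\dots,\varepsilon_k$, where conditions 1, 3, 4 force the bidiagonal staircase, and back-substitution from $\varepsilon_k$ down to $\varepsilon_1$ annihilates all candidate coefficients, contradicting $\lambda([e_\gamma,e_{\varepsilon_1}])\neq 0$; since restricting a linear dependence to a subset of columns preserves it, this indeed refutes condition (\ref{rk}) at $\gamma$, hence $\lambda\notin S$. Your treatment of coincidences is sound: equal roots share a single coefficient $c_\alpha$, so each downward step is either vacuous or a division by the nonzero entry $\lambda([e_{\delta_i},e_{\varepsilon_i}])$; the one point worth spelling out is that the vanishing of row $\gamma$ on the columns $\varepsilon_2,\dots,\varepsilon_k$ is not merely because $\gamma\prec\delta_i$, but follows from the forward direction of condition 3 together with condition 1 (a nonzero entry would force $\gamma\in\{\delta_i,\delta_{i+1}\}$), which in particular rules out the coincidence $\varepsilon_i=\varepsilon_1$ for $i\ge 2$. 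The trade-off between the two proofs: yours is shorter, purely linear-algebraic, and shows the stated hypotheses are not minimal, since condition 2 and all $\zeta_{D,\gamma}$-clauses are never used (conditions 1, 3 and the nonvanishing of $\lambda([e_{\delta_k},e_{\varepsilon_k}])$ already suffice); the paper's proof consumes the full hypotheses but delivers more information, namely an explicit element of the orbit with arbitrarily prescribed value at $e_\gamma$, in the same spirit as the $\zeta_{D,\gamma}$-machinery used throughout the rest of the paper.
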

\begin{proof}
It is enough to show that $(x\cdot\lambda)(e_\gamma)$ cannot be expressed as a linear combination of $\{(x\cdot\lambda)(e_\beta)\}_{\beta\succ\gamma}$. In other words, consider the orbit of the tangent action $\nt\cdot\lambda$, and let $\mu\in\nt\cdot\lambda$. It is enough to check that if all values $\mu(e_\beta)$ are known for $\beta\succ\gamma$, then the value of $\mu(e_\gamma)$ can be arbitrary. We will construct $\mu'\in\nt\cdot\lambda$ such that $\mu'(e_\beta) = \mu(e_\beta)$ for all $\beta\succ\gamma$ and $\mu'(e_\gamma) = A$, where $A$ is an arbitrary given value. Let $\mu' = x\cdot\lambda$, $x = \sum_{\alpha\in\Phi^+}x_\alpha e_\alpha$. It follows from the proof of Proposition \ref{suf} that if $\delta\in\zeta_{D,\gamma}$, then $x_\delta$ is uniquely defined by the values $\{\mu(e_\beta)\}_{\beta\succ\gamma}$. We define all other values $x_\alpha$, except $x_{\varepsilon_i}$, to be the same as for $\mu$. We will determine the values $x_{\varepsilon_i}$ by induction on $i=\overline{1,k-1}$.

\textbf{Base case}. Let $i=1$, $A = \sum_{\alpha\in\Phi^+}\lambda([e_{\delta_1}, e_\alpha])x_\alpha$. Clearly, if $\lambda([e_{\delta_1}, e_\alpha])\neq 0$, then either $\alpha=\varepsilon_1$ or $\alpha\in\zeta_{D,\gamma}$. Hence, in the right-hand side, all values $x_\alpha$ with nonzero $\lambda([e_{\delta_1}, e_\alpha])$  except possibly $x_{\varepsilon_1}$, are already defined. Since $\lambda([e_{\delta_1}, e_{\varepsilon_1}])\neq 0$, the value of $x_{\varepsilon_1}$ is uniquely determined as well. After this, the only expression $\mu'(e_\beta), \beta\succeq\gamma$ that may still depend on $x_{\varepsilon_1}$ is $\mu'(e_{\delta_2})$. If $k=1$, then nothing else depends on $x_{\varepsilon_1}$.

\textbf{Inductive step}. Suppose $x_{\varepsilon_{i-1}}$ was found. Consider $\mu(e_{\delta_i}) = \sum_{\alpha\in\Phi^+}\lambda([e_{\delta_i}, e_\alpha])x_\alpha$. On the right-hand side, only $x_{\varepsilon_{i-1}}, x_{\varepsilon_i}$ and $x_\alpha$, for which $\alpha\in\zeta_{D,\gamma}$, can appear with non-zero coefficients. The value $x_{\varepsilon_{i-1}}$ was determined in the previous step. Thus, on the right-hand side, only $x_{\varepsilon_i}$ may be undefined, and it definitely appears. From this equation, the value $x_{\varepsilon_1}$ is uniquely determined. The only $\mu'(e_\beta), \beta\succ\gamma$ that may still depend on $x_{\varepsilon_i}$ is $\mu'(e_{\delta_{i+1}})$.

\textbf{Case $i=k$}. Consider $\mu(e_{\delta_k}) = \sum_{\alpha\in\Phi^+}\lambda([e_{\delta_k}, e_\alpha])x_\alpha$. The right-hand side definitely contains $x_{\varepsilon_k}$ and possibly $x_{\varepsilon_{k-1}}$. The value $x_{\varepsilon_{k-1}}$ was determined in the previous step (or does not appear if $k=1$). All other $x_\alpha$ are also defined. Therefore, the value $x_{\varepsilon_k}$ is uniquely determined. Moreover, no other $\mu'(e_\beta), \beta\succeq\gamma$ depends on $x_{\varepsilon_k}$, and its value no longer affects anything else.

Thus, we have computed all coefficients $x_\alpha$ and constructed the required $\mu'$.
\end{proof}

Also note that the conditions of Proposition \ref{nec} do not depend on the specific coordinates of $\lambda$, but only on its support. Therefore, it is sufficient to check these conditions for just one form with a given support; for example, we may assume that all its nonzero coordinates are equal to $1$.

The conditions of Propositions \ref{suf} and \ref{nec} are formulated so that they can be checked algorithmically. Below we present algorithms for checking the necessary and sufficient conditions for a support to belong to the set $S$.

\textbf{Algorithm}

An incomplete enumeration of supports is performed. For each support, the necessary (respectively, sufficient) condition is checked, and the next support to be checked is then generated based on the result. The enumeration relies on the following observation. Let $D$ be a support, and let $\alpha$ be a positive root not in $D$ that is smaller than all roots in $D$. If $D$ does not satisfy the necessary (respectively, sufficient) condition, then $D\cup{\alpha}$ also fails to satisfy the necessary (respectively, sufficient) condition. Indeed, the necessary (respectively, sufficient) condition for a support fails when there exists a root $\gamma$ in the support satisfying certain conditions. These conditions depend only on those $\beta \in D$ such that $\beta \succeq \gamma$. Neither the set $\zeta_{D,\gamma}$ nor the values $\lambda([e_\alpha, e_\beta])$ (where $\alpha$ or $\beta \succeq \gamma$) depend on the roots in the support that are smaller than $\gamma$.

The enumeration proceeds according to the following rules. First, the support of the element ${2\alpha_1+3\alpha_2+4\alpha_3+2\alpha_4}$ is checked against the necessary (respectively, sufficient) condition. Next, suppose we have checked a support $D$ against the necessary (respectively, sufficient) condition. Let $\delta$ be the minimal root in $D$. First, let us consider the case if $\delta\neq\alpha_4$. If $D$ passes the check, we add to $D$ the positive root immediately preceding $\delta$ (with respect to the order $\succ$). If $D$ fails the check, we remove $\delta$ from $D$ and add the positive root immediately preceding $\delta$.
Then we check the new support against the necessary (respectively, sufficient) condition. Second, if $\delta=\alpha_4$, then we remove $\delta$, remove the new minimal root $\beta$ in the support (if it exists) and add the root closest to $\beta$ from below. If $D = \{\alpha_4\}$, then the enumeration ends here. Finally, we include the empty set in the list of supports, since it trivially satisfies the necessary (respectively, sufficient) condition. In this way, we enumerate all supports that could potentially satisfy the necessary (respectively, sufficient) condition, while avoiding checks for supports that certainly do not satisfy these conditions.

Note that if $D$ is the support of a form $\lambda$, then the condition $\lambda([e_\alpha, e_\beta])\neq0$ is equivalent to $\alpha+\beta\in D$. Checking this condition is easy to implement. As mentioned earlier, the necessary (respectively, sufficient) condition must be checked for all roots in the support $D$. It is clear that it suffices to check it only for the root $\gamma$ that was most recently added to the support, because for all other roots the condition has already been verified. The first step is to construct the set $\zeta_{D,\gamma}$. This can be done as follows. We run a loop over rows indexed by roots $\alpha \succ \gamma$. If in a row $\alpha$ there is exactly one non-zero element $\lambda([e_\alpha, e_\delta])$ such that $\delta \notin \zeta_{D,\gamma}$, then we add $\delta$ to $\zeta_{D,\gamma}$. After completing a full pass through the rows, if the size of $\zeta_{D,\gamma}$ has increased, we repeat the loop. The process stops when the cardinality of $\zeta_{D,\gamma}$ stabilizes. This yields the set $\zeta_{D,\gamma}$.

Checking the sufficient condition for the support $D$ is straightforward. We iterate over the row indexed by $\gamma$ and verify the condition from Proposition \ref{suf}.

The necessary condition is checked as follows. We describe the $i$-th step of the verification. We examine the row indexed by $\delta_i$ (recall that $\delta_1 = \gamma$). If we encounter a non-zero element $\lambda([e_{\delta_i}, e_{\varepsilon}])$ such that $\varepsilon \notin \zeta_{D,\gamma}$ and $\varepsilon \neq \varepsilon_{i-1}$ (where $\varepsilon_0$ is undefined), we inspect the column indexed by $\varepsilon$, starting from row $\gamma$ upwards. If there are no other non-zero elements in this segment of the column, the necessary condition is not satisfied. If this is not the case, but exactly one such element $\lambda([e_{\delta_i}, e_{\varepsilon}])$ with $\varepsilon \notin \zeta_{D,\gamma}$ is found in the row (ignoring a possible $\lambda([e_{\delta_i}, e_{\varepsilon_{i-1}}])$), and if, in the corresponding segment of the column, there is exactly one other non-zero element of the form $\lambda([e_{\delta}, e_{\varepsilon}])$, then we set $\varepsilon_i = \varepsilon$ and $\delta_{i+1} = \delta$. We then proceed to check the condition for the next column, indexed by $\delta_{i+1}$.

In practice, our program checks the necessary condition only up to the second step. If the condition has not been violated by that point, the program considers the check passed. This simplification does not significantly affect the final result.

The results of the check are as follows: 878 supports satisfy the sufficient condition, while 911 satisfy the necessary condition. Therefore, 33 supports remain to be checked manually. They are listed below:
\begin{enumerate}\label{tablesupp}
    \item $\alpha_1+2\alpha_2+4\alpha_3+2\alpha_4,~\alpha_1+2\alpha_2+2\alpha_3+2\alpha_4,~\alpha_1+2\alpha_2+2\alpha_3,~\alpha_1+\alpha_2$
    \item $\alpha_1+2\alpha_2+4\alpha_3+2\alpha_4,~\alpha_1+2\alpha_2+2\alpha_3+2\alpha_4,~\alpha_1+2\alpha_2+2\alpha_3,~\alpha_1+\alpha_2,~\alpha_1$
    \item $\alpha_1+2\alpha_2+4\alpha_3+2\alpha_4,~\alpha_1+2\alpha_2+2\alpha_3+2\alpha_4,~\alpha_1+2\alpha_2+2\alpha_3,~\alpha_1,~\alpha_2$
    \item $\alpha_1+2\alpha_2+4\alpha_3+2\alpha_4,~\alpha_1+2\alpha_2+2\alpha_3+2\alpha_4,~\alpha_1+2\alpha_2+2\alpha_3,~\alpha_2$
    \item $\alpha_1+2\alpha_2+2\alpha_3+2\alpha_4,~\alpha_1+2\alpha_2+2\alpha_3,~\alpha_1+\alpha_2+2\alpha_3+\alpha_4,~\alpha_1+\alpha_2+2\alpha_3,~\alpha_2+2\alpha_3,~\alpha_3+\alpha_4$
    \item $\alpha_1+2\alpha_2+2\alpha_3+2\alpha_4,~\alpha_1+\alpha_2+2\alpha_3+\alpha_4,~\alpha_1+\alpha_2+2\alpha_3,~\alpha_2+2\alpha_3,~\alpha_2+\alpha_3$
    \item $\alpha_1+2\alpha_2+2\alpha_3,~\alpha_1+\alpha_2+2\alpha_3+\alpha_4,~\alpha_2+\alpha_3+\alpha_4,~\alpha_3+\alpha_4$
    \item $\alpha_1+2\alpha_2+2\alpha_3,~\alpha_1+\alpha_2+2\alpha_3+\alpha_4,~\alpha_2+\alpha_3+\alpha_4,~\alpha_3+\alpha_4,~\alpha_4$
    \item $\alpha_1+2\alpha_2+2\alpha_3,~\alpha_1+\alpha_2+2\alpha_3+\alpha_4,~\alpha_2+\alpha_3+\alpha_4,~\alpha_4$
    \item $\alpha_1+2\alpha_2+2\alpha_3,~\alpha_1+\alpha_2+\alpha_3+\alpha_4,~\alpha_2+2\alpha_3+\alpha_4,~\alpha_3+\alpha_4$
    \item $\alpha_1+2\alpha_2+2\alpha_3,~\alpha_1+\alpha_2+\alpha_3+\alpha_4,~\alpha_2+2\alpha_3+\alpha_4,~\alpha_3+\alpha_4,~\alpha_3$
    \item $\alpha_1+2\alpha_2+2\alpha_3,~\alpha_1+\alpha_2+\alpha_3+\alpha_4,~\alpha_2+2\alpha_3+\alpha_4,~\alpha_3+\alpha_4,~\alpha_3,~\alpha_4$
    \item $\alpha_1+2\alpha_2+2\alpha_3,~\alpha_1+\alpha_2+\alpha_3+\alpha_4,~\alpha_2+2\alpha_3+\alpha_4,~\alpha_3+\alpha_4,~\alpha_4$
    \item $\alpha_1+2\alpha_2+2\alpha_3,~\alpha_1+\alpha_2+\alpha_3+\alpha_4,~\alpha_2+2\alpha_3+\alpha_4,~\alpha_3,~\alpha_4$
    \item $\alpha_1+2\alpha_2+2\alpha_3,~\alpha_1+\alpha_2+\alpha_3+\alpha_4,~\alpha_2+2\alpha_3+\alpha_4,~\alpha_4$
    \item $\alpha_1+\alpha_2+2\alpha_3+2\alpha_4,~\alpha_1+\alpha_2,~\alpha_2+\alpha_3+\alpha_4,~\alpha_2+\alpha_3,~\alpha_3$
    \item $\alpha_1+\alpha_2+2\alpha_3,~\alpha_1+\alpha_2+\alpha_3+\alpha_4,~\alpha_2+2\alpha_3+\alpha_4,~\alpha_2+\alpha_3+\alpha_4$
    \item $\alpha_1+\alpha_2+2\alpha_3,~\alpha_1+\alpha_2+\alpha_3+\alpha_4,~\alpha_2+2\alpha_3+\alpha_4,~\alpha_2+\alpha_3+\alpha_4,~\alpha_2+\alpha_3$
    \item $\alpha_1+\alpha_2+2\alpha_3,~\alpha_1+\alpha_2+\alpha_3+\alpha_4,~\alpha_2+2\alpha_3+\alpha_4,~\alpha_2+\alpha_3+\alpha_4,~\alpha_2+\alpha_3,~\alpha_2$
    \item $\alpha_1+\alpha_2+2\alpha_3,~\alpha_1+\alpha_2+\alpha_3+\alpha_4,~\alpha_2+2\alpha_3+\alpha_4,~\alpha_2+\alpha_3+\alpha_4,~\alpha_2+\alpha_3,~\alpha_2,~\alpha_4$
    \item $\alpha_1+\alpha_2+2\alpha_3,~\alpha_1+\alpha_2+\alpha_3+\alpha_4,~\alpha_2+2\alpha_3+\alpha_4,~\alpha_2+\alpha_3+\alpha_4,~\alpha_2+\alpha_3,~\alpha_4$
    \item $\alpha_1+\alpha_2+2\alpha_3,~\alpha_1+\alpha_2+\alpha_3+\alpha_4,~\alpha_2+2\alpha_3+\alpha_4,~\alpha_2+\alpha_3+\alpha_4,~\alpha_2$
    \item $\alpha_1+\alpha_2+2\alpha_3,~\alpha_1+\alpha_2+\alpha_3+\alpha_4,~\alpha_2+2\alpha_3+\alpha_4,~\alpha_2+\alpha_3+\alpha_4,~\alpha_2,~\alpha_4$
    \item $\alpha_1+\alpha_2+2\alpha_3,~\alpha_1+\alpha_2+\alpha_3+\alpha_4,~\alpha_2+2\alpha_3+\alpha_4,~\alpha_2+\alpha_3+\alpha_4,~\alpha_4$
    \item $\alpha_1+\alpha_2+2\alpha_3,~\alpha_1+\alpha_2+\alpha_3+\alpha_4,~\alpha_2+2\alpha_3+\alpha_4,~\alpha_2+\alpha_3,~\alpha_2$
    \item $\alpha_1+\alpha_2+2\alpha_3,~\alpha_1+\alpha_2+\alpha_3+\alpha_4,~\alpha_2+2\alpha_3+\alpha_4,~\alpha_2+\alpha_3,~\alpha_2,~\alpha_4$
    \item $\alpha_1+\alpha_2+2\alpha_3,~\alpha_1+\alpha_2+\alpha_3+\alpha_4,~\alpha_2+2\alpha_3+\alpha_4,~\alpha_2+\alpha_3,~\alpha_4$
    \item $\alpha_1+\alpha_2+2\alpha_3,~\alpha_1+\alpha_2+\alpha_3+\alpha_4,~\alpha_2+2\alpha_3+\alpha_4,~\alpha_2,~\alpha_4$
    \item $\alpha_1+\alpha_2+2\alpha_3,~\alpha_1+\alpha_2+\alpha_3+\alpha_4,~\alpha_2+2\alpha_3+\alpha_4,~\alpha_4$
    \item $\alpha_1+\alpha_2+2\alpha_3,~\alpha_1+\alpha_2,~\alpha_2+2\alpha_3+\alpha_4,~\alpha_2+\alpha_3+\alpha_4,~\alpha_2$
    \item $\alpha_1+\alpha_2+2\alpha_3,~\alpha_1+\alpha_2,~\alpha_2+2\alpha_3+\alpha_4,~\alpha_2+\alpha_3+\alpha_4,~\alpha_2,~\alpha_4$
    \item $\alpha_1+\alpha_2+2\alpha_3,~\alpha_1+\alpha_2,~\alpha_2+2\alpha_3+\alpha_4,~\alpha_2+\alpha_3+\alpha_4,~\alpha_4$
    \item $\alpha_1+\alpha_2,~\alpha_2+2\alpha_3,~\alpha_2+\alpha_3+\alpha_4,~\alpha_3+\alpha_4,~\alpha_4$
\end{enumerate}

Linear forms with support 5 do not satisfy the necessary condition, but checking this requires three steps, whereas our algorithm checks only the first two steps (see above). It is clear that linear forms with supports numbered 2, 6, 12, 14, 16, 18, 19, 20 and 21 do not satisfy condition (\ref{rk}); that is, such forms do not belong to $S$. This follows immediately from Table 1, similar to Example~\ref{ex}. In the remaining cases, it is necessary to write out the coordinates of the infinitesimal action explicitly. It turns out that the linear forms with supports numbered 25 and 33 lie in $S$ without any restrictions on their coordinates. Linear forms with supports numbered 30, 31, and 32 lie in $S$ if and only if
\begin{center}
    $\lambda_{\alpha_1+\alpha_2}\lambda^2_{\alpha_2+2\alpha_3+\alpha_4} = \lambda_{\alpha_1+\alpha_2+2\alpha_3}\lambda^2_{\alpha_2+\alpha_3+\alpha_4}$.
\end{center} Thus, we obtain 883 supports whose corresponding forms belong to $S$, three of which require an additional condition on the coordinates. This completes the proof of Proposition~\ref{S}.

\begin{example}
    \label{exam1}
Consider a form $\lambda$ with support 33. We will show that $\lambda\in S$. Let $\mu\in\nt\cdot\lambda$, $\mu=x\cdot\lambda,~x = \sum\nolimits_{\alpha\in\Phi^+}x_\alpha e_\alpha,~\lambda=\sum\nolimits_{\alpha\in\Phi^+}\lambda_\alpha e_\alpha^*,~\mu=\sum\nolimits_{\alpha\in\Phi^+}\mu_\alpha e_\alpha^*$. Let us write down explicitly the coordinates of interest. We denote the structure constants in the relation $C e_{\alpha+\beta}=[e_\alpha,e_\beta]$ by $C_{\alpha,\beta}$. Their values can be found in \cite{KolesnikovPolovinkina23}. Then
\begin{equation*}
\begin{split}
    \mu_{\alpha_1+\alpha_2}&=\mu_{\alpha_2+2\alpha_3}=\mu_{\alpha_2+\alpha_3+\alpha_4}=0;\\
    \mu_{\alpha_1}&=-\lambda([x,e_{\alpha_1}])=-\lambda([x_{\alpha_2}e_{\alpha_2},e_{\alpha_1}])=-C_{\alpha_2,\alpha_1}x_{\alpha_2}\lambda(e_{\alpha_1+\alpha_2})=-x_{\alpha_2}\lambda_{\alpha_1+\alpha_2};\\
    \mu_{\alpha_2+\alpha_3}&=-\lambda([x,e_{\alpha_2+\alpha_3}])=-\lambda([x_{\alpha_4}e_{\alpha_4}+x_{\alpha_3}e_{\alpha_3},e_{\alpha_2+\alpha_3}])=\\
    &=-C_{\alpha_4,\alpha_2+\alpha_3}x_{\alpha_4}\lambda(e_{\alpha_2+\alpha_3+\alpha_4})-C_{\alpha_3,\alpha_2+\alpha_3}x_{\alpha_3}\lambda(e_{\alpha_2+2\alpha_3})= \\
    &=-x_{\alpha_4}\lambda_{\alpha_2+\alpha_3+\alpha_4}+2x_{\alpha_3}\lambda_{\alpha_2+2\alpha_3};\\
    \mu_{\alpha_3+\alpha_4}&=-\lambda([x,e_{\alpha_3+\alpha_4}])=-\lambda([x_{\alpha_2}e_{\alpha_2},e_{\alpha_3+\alpha_4}])=-C_{\alpha_2,\alpha_3+\alpha_4}x_{\alpha_2}\lambda(e_{\alpha_2+\alpha_3+\alpha_4})\\
    &=x_{\alpha_2}\lambda_{\alpha_2+\alpha_3+\alpha_4}=-\dfrac{\lambda_{\alpha_2+\alpha_3+\alpha_4}}{\lambda_{\alpha_1+\alpha_2}}\mu_{\alpha_1};\\
    \mu_{\alpha_3}&=-\lambda([x,e_{\alpha_3}])=-\lambda([x_{\alpha_4}e_{\alpha_4}+x_{\alpha_2+\alpha_3}e_{\alpha_2+\alpha_3},e_{\alpha_3}])\\
    &=-C_{\alpha_4,\alpha_3}x_{\alpha_4}\lambda(e_{\alpha_3+\alpha_4})-C_{\alpha_2+\alpha_3,\alpha_3}x_{\alpha_2+\alpha_3}\lambda(e_{\alpha_2+2\alpha_3})=\\
    &-x_{\alpha_4}\lambda_{\alpha_3+\alpha_4}-2x_{\alpha_2+\alpha_3}\lambda_{\alpha_2+2\alpha_3};\\
    \mu_{\alpha_4}&=-\lambda([x,e_{\alpha_4}])=-\lambda([x_{\alpha_3}e_{\alpha_3}+x_{\alpha_2+\alpha_3}e_{\alpha_2+\alpha_3},e_{\alpha_4}])\\
    &=-C_{\alpha_3,\alpha_4}x_{\alpha_3}\lambda(e_{\alpha_3+\alpha_4})-C_{\alpha_2+\alpha_3,\alpha_4}x_{\alpha_2+\alpha_3}\lambda(e_{\alpha_2+\alpha_3+\alpha_4})= \\
    &=x_{\alpha_3}\lambda_{\alpha_3+\alpha_4}+x_{\alpha_2+\alpha_3}\lambda_{\alpha_2+\alpha_3+\alpha_4}=\dfrac{\lambda_{\alpha_3+\alpha_4}}{2\lambda_{\alpha_2+2\alpha_3}}\mu_{\alpha_2+\alpha_3}-\dfrac{\lambda_{\alpha_2+\alpha_3+\alpha_4}}{2\lambda_{\alpha_2+2\alpha_3}}\mu_{\alpha_3}.
\end{split}
\end{equation*}

Let us also consider support number 25. For all roots except $\alpha_2$, condition~(\ref{rk}) obviously holds. The coordinate $\mu_{\alpha_2}$ equals $-x_{\alpha_3}\lambda_{\alpha_2+\alpha_3}$. Therefore, if $x_{\alpha_3}$ can be expressed as a linear combination of $\{\mu_{\beta}\}_{\beta\succ\alpha_2}$, then $\alpha_2$ satisfies condition (\ref{rk}). This is indeed the case:
\begin{equation*}
\begin{split}
    \mu_{\alpha_1+\alpha_2+\alpha_3}&=-x_{\alpha_4}\lambda_{\alpha_1+\alpha_2+\alpha_3+\alpha_4}+2x_{\alpha_3}\lambda_{\alpha_1+\alpha_2+2\alpha_3};\\
    \mu_{\alpha_2+2\alpha_3}&=-x_{\alpha_4}\lambda_{\alpha_2+2\alpha_3+\alpha_4}+x_{\alpha_1}\lambda_{\alpha_1+\alpha_2+2\alpha_3};\\
    \mu_{\alpha_2+\alpha_3+\alpha_4}&=x_{\alpha_3}\lambda_{\alpha_2+2\alpha_3+\alpha_4}+x_{\alpha_1}\lambda_{\alpha_1+\alpha_2+\alpha_3+\alpha_4};\\
    x_{\alpha_3}&=\dfrac{\mu_{\alpha_2+\alpha_3+\alpha_4}\lambda_{\alpha_1+\alpha_2+2\alpha_3}-\mu_{\alpha_2+2\alpha_3}\lambda_{\alpha_1+\alpha_2+\alpha_3+\alpha_4}+\mu_{\alpha_1+\alpha_2+\alpha_3}\lambda_{\alpha_2+2\alpha_3+\alpha_4}}{3\lambda_{\alpha_2+2\alpha_3+\alpha_4}\lambda_{\alpha_1+\alpha_2+2\alpha_3}}.
\end{split}
\end{equation*}

Finally, consider forms with supports 30, 31, and 32. Suppose the support contains the roots $\alpha_1+\alpha_2+2\alpha_3,~\alpha_1+\alpha_2,~\alpha_2+2\alpha_3+\alpha_4,~\alpha_2+\alpha_3+\alpha_4$. Let us check whether condition~(\ref{rk}) holds for the root $\alpha_2$. Note that
\begin{equation*}
\begin{split}
    \mu_{\alpha_2+2\alpha_3}&=-x_{\alpha_4}\lambda_{\alpha_2+2\alpha_3+\alpha_4}+x_{\alpha_1}\lambda_{\alpha_1+\alpha_2+2\alpha_3};\\
    \mu_{\alpha_2+\alpha_3}&=-x_{\alpha_4}\lambda_{\alpha_2+\alpha_3+\alpha_4}+x_{\alpha_3+\alpha_4}\lambda_{\alpha_2+2\alpha_3+\alpha_4};\\
    \mu_{\alpha_2}&=-x_{\alpha_3+\alpha_4}\lambda_{\alpha_2+\alpha_3+\alpha_4}+x_{\alpha_1}\lambda_{\alpha_1+\alpha_2}.
\end{split}
\end{equation*}
We see that if $\beta\succ\alpha_2$ and the variable $x_{\alpha_3+\alpha_4}$ appears in $\mu_\beta$, then $\beta=\alpha_2+\alpha_3$. Similarly, if $\beta\succ\alpha_2$ and $x_{\alpha_1}$ appears in $\mu_\beta$, then $\beta=\alpha_2+2\alpha_3$. Hence, the coordinate $\mu_{\alpha_2}$ must be a linear combination of $\mu_{\alpha_2+2\alpha_3}$ and $\mu_{\alpha_2+\alpha_3}$. It is easy to verify that this holds if and only if
\begin{center}
$\lambda_{\alpha_1+\alpha_2}\lambda^2_{\alpha_2+2\alpha_3+\alpha_4} = \lambda_{\alpha_1+\alpha_2+2\alpha_3}\lambda^2_{\alpha_2+\alpha_3+\alpha_4}$.
\end{center}

Now, let us check condition (\ref{rk}) for the root $\alpha_4$. Note that
\begin{equation*}
\begin{split}
    \mu_{\alpha_1}&=-x_{\alpha_2}\lambda_{\alpha_1+\alpha_2}-x_{\alpha_2+2\alpha_3}\lambda_{\alpha_1+\alpha_2+2\alpha_3};\\
    \mu_{\alpha_3+\alpha_4}&=x_{\alpha_2}\lambda_{\alpha_2+\alpha_3+\alpha_4}-x_{\alpha_2+\alpha_3}\lambda_{\alpha_2+2\alpha_3+\alpha_4};\\
    \mu_{\alpha_4}&=x_{\alpha_2+\alpha_3}\lambda_{\alpha_2+\alpha_3+\alpha_4}+x_{\alpha_2+2\alpha_3}\lambda_{\alpha_2+2\alpha_3+\alpha_4}.
\end{split}
\end{equation*}
By an argument analogous to the one above, we find that the coordinate $\mu_{\alpha_4}$ must be a linear combination of $\mu_{\alpha_1}$ and $\mu_{\alpha_3+\alpha_4}$. It turns out that this condition is also equivalent to
\begin{center}
$\lambda_{\alpha_1+\alpha_2}\lambda^2_{\alpha_2+2\alpha_3+\alpha_4} = \lambda_{\alpha_1+\alpha_2+2\alpha_3}\lambda^2_{\alpha_2+\alpha_3+\alpha_4}$.
\end{center}
\end{example}

\section{Classification of orbits}

In this section, we will prove Theorem \ref{list}, thereby completing the classification of coadjoint orbits for type $F_4$. For the next stage, we will need the concept of a singular root.
\begin{definition}
Let $\alpha,\gamma\in\Phi^+$. The root $\alpha$ is called singular to $\gamma$ if $\gamma-\alpha\in\Phi^+$. The set of all roots singular to $\gamma$ is denoted by $S_\gamma$.
\end{definition}

It is obvious that the singular roots in $S_\gamma$ are divided into pairs of the form $\alpha+\beta=\gamma$.

From now on, unless stated otherwise, everything will take place over the algebraic closure $\overline{\Fp}$. We introduce two $\overline{\Fp}[x_\alpha,\alpha\in\Phi^+]$-modules:
\begin{equation*}
\begin{split}
M &= \overline{\Fp}[x_\alpha,\alpha\in\Phi^+]\otimes_{\Fp}\nt,\\
M^* &= \overline{\Fp}[x_\alpha,\alpha\in\Phi^+]\otimes_{\Fp}\nt^*.
\end{split}
\end{equation*}
Here $\overline{\Fp}[x_\alpha,\alpha\in\Phi^+]$ denotes the ring of polynomials over $\overline\Fp$ in the independent variables $x_\alpha, \alpha\in\Phi^+$. We define a pairing $M^*\times M\to\overline{\Fp}[x_\alpha,\alpha\in\Phi^+]$ by
\begin{equation*}
\langle f\otimes\lambda,g\otimes x\rangle\mapsto \lambda(x)fg.
\end{equation*}
In what follows, we will write $\lambda(x)$ instead of $\langle\lambda,x\rangle$ for $\lambda\in M^*, x\in M$.
Note that $\nt$ is a $\Fp$-subspace of $M$, and $\nt^*$ is a $\Fp$-subspace of $M^*$. Therefore, two other pairings, $\nt^*\times M\to\overline{\Fp}[x_\alpha,\alpha\in\Phi^+]$ and $M^*\times \nt\to\overline{\Fp}[x_\alpha,\alpha\in\Phi^+]$, are automatically induced. We introduce a Lie algebra structure on $M$ in the obvious way: for $f,g\in\overline{\Fp}[x_\alpha,\alpha\in\Phi^+]$ and $x,y\in\nt$, set
\begin{equation*}
[f\otimes x, g\otimes y] = fg\otimes[x,y].
\end{equation*}

Since $\nt$ can be identified with upper-triangular matrices with entries in $\Fp$, we can identify $M$ with upper-triangular matrices whose entries lie in $\overline{\Fp}[x_\alpha, \alpha\in\Phi^+]$. Consequently, we can define $\exp(x)$ for $x\in M$. Since the characteristic of $\Fp$ is sufficiently large, $\exp(x)$ is well-defined for all $x\in M$. Let $N_M$ be the group consisting of the matrices $\{\exp(x), x\in M\}$. The actions $N_M\curvearrowright M$ and $N_M\curvearrowright M^*$ are defined naturally. An explicit description of the action $N_M\curvearrowright M^*$ is as follows: for $x\in M$ and $\lambda \in M^*$,
\begin{equation*}
\exp(x)\cdot\lambda = \sum_{\alpha\in\Phi^+}\lambda\left(\sum_{i=0}^{\infty}\dfrac{(-1)^i}{i!}\underbrace{[x,[x,\dots,[x,e_\alpha]\dots]]}_{\text{$i$ commutators}}\right)\otimes e_\alpha^*.
\end{equation*}

Denote by $\chi$ the element $\chi = \sum_{\alpha\in\Phi^+}x_\alpha \otimes e_\alpha\in M$. For a form $\lambda\in\nt^*$, we will consider the element $\exp(\chi)\cdot\lambda\in M^*$. Let us also set $\overline{\nt}=\overline{\Fp}\otimes_{\Fp}\nt$, $\overline{N}=\overline{\Fp}\otimes_{\Fp}N$. Note that if $\wt{x} = \sum_{\alpha\in\Phi^+}\wt{x}_\alpha e_\alpha\in \overline\nt$ with $\wt{x}_\alpha\in\overline\Fp$, then $\exp(\wt{x})\cdot\lambda = (\exp(\chi)\cdot\lambda)(\wt{x}_\alpha,\alpha\in\Phi^+)$.

Now we formulate one of the most important theorems of this work. Let $y_\beta,\beta\in\Phi^+$ be independent variables.

\begin{theorem}
    \label{fun}
Let $\lambda\in S$\textup, $D = \supp(\lambda)$\textup, and let $\gamma\in\Phi^+$ satisfy condition \textup{(\ref{rk})}. Then there exists a rational function $F_{D,\gamma}\in\overline{\Fp}(y_\beta,\beta\succ\gamma)$ such that
\begin{equation}\label{FD}
\mu(e_\gamma) = \lambda(e_\gamma) + F_{D,\gamma}(\mu(e_\beta), \beta\succ\gamma),
\end{equation}
where $\mu=\exp(\chi)\cdot\lambda\in M^*$.
\end{theorem}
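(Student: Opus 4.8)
The plan is to prove the statement by induction along the linear order $\succ$, descending from the largest root, turning the qualitative fact established in Proposition~\ref{suf}---that certain variables $x_\delta$ are \emph{expressible} as linear combinations of the $\mu(e_\beta)$ with $\beta\succ\gamma$---into an explicit rational expression over $\overline{\Fp}(y_\beta,\beta\succ\gamma)$. First I would work in the module $M^*$ and write out the coordinate $\mu(e_\gamma)$ of $\mu=\exp(\chi)\cdot\lambda$. Expanding the exponential action, $\mu(e_\gamma)$ is a polynomial in the variables $x_\alpha$ whose degree-one part is $\lambda(e_\gamma)-\sum_{\alpha}\lambda([e_\alpha,e_\gamma])x_\alpha$ and whose higher-degree part comes from the iterated commutators $[\chi,[\chi,\dots,e_\gamma]]$. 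The strategy is to show that every monomial in the $x_\alpha$ that actually appears (beyond the constant term $\lambda(e_\gamma)$) can be rewritten rationally in terms of the coordinates $\mu(e_\beta)$ with $\beta\succ\gamma$.

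The key mechanism is that each variable $x_\delta$ occurring in $\mu(e_\gamma)$ corresponds, via the structure of the commutator, to a root $\delta$ with $\delta+\gamma\in D$ or more generally to a root appearing in an iterated bracket landing in $D$; in all such cases $\lambda([e_\delta,e_\gamma])\neq 0$ forces $\delta\in\zeta_{D,\gamma}$ by the hypothesis of condition~(\ref{rk}) (here I invoke Proposition~\ref{suf} and its proof). So I would first establish, by induction on $|\zeta_{D,\gamma}|$ exactly as in the proof of Proposition~\ref{suf}, that for each $\delta\in\zeta_{D,\gamma}$ there is a rational function $G_\delta\in\overline{\Fp}(y_\beta,\beta\succ\gamma)$ with $x_\delta=G_\delta(\mu(e_\beta),\beta\succ\gamma)$. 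Indeed, the defining step of $\zeta_{D,\gamma}$ produces, for each newly added $\delta$, a root $\beta\succ\gamma$ and an equation
\begin{equation*}
\lambda([e_\delta,e_\beta])\,x_\delta = -\mu(e_\beta)-\sum_{\alpha\neq\delta}\lambda([e_\alpha,e_\beta])\,x_\alpha + (\text{higher-order terms in }\chi),
\end{equation*}
in which every other $x_\alpha$ with nonzero coefficient already lies in $\zeta_{D,\gamma}$ and is therefore, inductively, rationally known; dividing by the nonzero scalar $\lambda([e_\delta,e_\beta])$ solves for $x_\delta$ rationally. The higher-order terms in $\chi$ involve only variables $x_\alpha$ for roots $\alpha$ strictly larger in the partial order, hence strictly larger in $\succ$, which by a secondary downward induction are themselves already expressed rationally in the $\mu(e_{\beta'})$ with $\beta'\succ\gamma$; this keeps the whole construction consistent.

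Having expressed every relevant $x_\delta$ rationally, I would substitute these expressions into $\mu(e_\gamma)=\lambda(e_\gamma)-\sum_\alpha\lambda([e_\alpha,e_\gamma])x_\alpha+\cdots$. Because every variable $x_\alpha$ that survives in this expression satisfies $\lambda([e_\alpha,e_\gamma])\neq 0$ (for the linear part) or arises from an iterated bracket with nonzero $\lambda$-value (for the higher part), each such $\alpha$ lies in $\zeta_{D,\gamma}$ by condition~(\ref{rk}), so the substitution is legitimate and yields $\mu(e_\gamma)=\lambda(e_\gamma)+F_{D,\gamma}(\mu(e_\beta),\beta\succ\gamma)$ for a rational $F_{D,\gamma}$, as claimed. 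The main obstacle I anticipate is the bookkeeping for the nonlinear terms: unlike the infinitesimal (tangent) action used in Proposition~\ref{suf}, the full group action $\exp(\chi)\cdot\lambda$ contributes products of several $x_\alpha$'s from the iterated commutators, and one must verify that each such product involves only roots whose variables have \emph{already} been solved for---i.e.\ that the support of every contributing monomial lies in $\zeta_{D,\gamma}$ together with roots handled by the outer descending induction---so that no circularity arises and the denominators (products of the nonzero structure-constant multiples $\lambda([e_\delta,e_\beta])$) never vanish. Organizing the two nested inductions (descending in $\succ$ on $\gamma$, and ascending on $|\zeta_{D,\gamma}|$) so that they interlock cleanly is where the care is required.
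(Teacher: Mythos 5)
Your plan founders on three concrete points, and together they are fatal. First, your claim that the higher-order terms of $\mu(e_\beta)$ ``involve only variables $x_\alpha$ for roots $\alpha$ strictly larger in the partial order, hence strictly larger in $\succ$'' is false: a degree-$n$ monomial of $\mu(e_\beta)$ comes from roots $\gamma_1,\dots,\gamma_n$ with $\beta+\gamma_1+\cdots+\gamma_n\in D$, and these $\gamma_i$ are typically \emph{small} roots. In Example \ref{exN}, with $D=\{\alpha_4,\alpha_3+\alpha_4,\alpha_2,\alpha_2+2\alpha_3\}$, one has $\mu_{\alpha_2}=\lambda_{\alpha_2}-x_{\alpha_3}^2\lambda_{\alpha_2+2\alpha_3}$ and $\alpha_3\prec\alpha_2$, so your ``secondary downward induction'' has no base to stand on. Second, you infer from condition (\ref{rk}) that every $x_\alpha$ surviving in $\mu(e_\gamma)$ has $\alpha\in\zeta_{D,\gamma}$. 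That inference is not condition (\ref{rk}); it is the hypothesis of Proposition \ref{suf}, which is strictly stronger than membership in $S$. The theorem must also cover the forms in $S$ that violate that hypothesis --- precisely supports 25, 33, and 30--32 (the latter subject to the relation $\lambda_{\alpha_1+\alpha_2}\lambda^2_{\alpha_2+2\alpha_3+\alpha_4}=\lambda_{\alpha_1+\alpha_2+2\alpha_3}\lambda^2_{\alpha_2+\alpha_3+\alpha_4}$) from the manual list. For these, individual variables (e.g.\ $x_{\alpha_1}$ and $x_{\alpha_3+\alpha_4}$ in Example \ref{exam1}) are \emph{not} separately expressible; only particular combinations of coordinates are, and only because of that algebraic relation among the $\lambda_\alpha$, which your argument never uses and could not produce.

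Third, and most fundamentally: even where the triangular elimination does go through, after substituting every solvable $x_\delta$ the resulting expression for $\mu(e_\gamma)$ may still formally contain unsolved variables, and one must prove that it is in fact \emph{constant} in them. This is exactly the content of the paper's Lemma \ref{uno} (one-element supports --- the case where your scheme looks most innocuous), and it is established not by elimination but by a dimension count: if the reduced polynomial $G$ genuinely depended on a leftover $x_\alpha$, one would obtain a surjection from $\overline{N}\cdot\lambda$ onto $\overline{\Fp}^{2k+1}$, forcing $\dim\overline{N}\cdot\lambda\ge 2k+1$ and contradicting $\dim\overline{N}\cdot\lambda=|S_\delta|=2k$. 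Nothing in your proposal supplies such an argument. This gap is why the paper's proof has the shape it does: Lemma \ref{uno} via the dimension argument, then the pair-set $\zeta'_{D,\gamma}$ and the induction Lemma \ref{zeta'}, and finally an explicit case-by-case verification over the support list, with rational formulas written out for the stubborn supports. Your purely formal induction would at best reproduce the easy cases already handled by Lemma \ref{zeta'}.
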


\begin{example}
    \label{exN}
Consider the same support as in Example \ref{ex}. Let
\begin{center}
    $D=\supp(\lambda)=\{\alpha_4,\alpha_3+\alpha_4,\alpha_2,\alpha_2+2\alpha_3\}$.
\end{center} We will show that for this $\lambda$ and for all roots $\gamma$ satisfying condition (\ref{rk}), formula (\ref{FD}) holds. Let $\mu=\exp(\chi)\cdot\lambda,~\lambda=\sum_{\alpha\in\Phi^+}\lambda_\alpha e_\alpha^*,~\mu=\sum_{\alpha\in\Phi^+}\mu_\alpha\otimes e_\alpha^*$. Let us write down explicitly the coordinates of interest. As in Example \ref{exam1}, we denote the structural constants in the relation $C e_{\alpha+\beta}=[e_\alpha,e_\beta]$ by $C_{\alpha,\beta}$. Their values can be found in \cite{KolesnikovPolovinkina23}. We have
\begin{equation*}
\begin{split}
    \mu_{\alpha_2+2\alpha_3}&=\lambda(e_{\alpha_2+2\alpha_3})=\lambda_{\alpha_2+2\alpha_3};\\
    \mu_{\alpha_2+\alpha_3+\alpha_4}&=0;\\
    \mu_{\alpha_2+\alpha_3}&=-\lambda([x,e_{\alpha_2+\alpha_3}])=-
    \lambda([x_{\alpha_3}e_{\alpha_3},e_{\alpha_2+\alpha_3}])=-C_{\alpha_3,\alpha_2+\alpha_3}x_{\alpha_3}
    \lambda(e_{\alpha_2+2\alpha_3})=\\
    &=2x_{\alpha_3}\lambda_{\alpha_2+2\alpha_3};\\
    \mu_{\alpha_2}&=\lambda(e_{\alpha_2})+\dfrac{1}{2}\lambda([x,[x,e_{\alpha_2}]])=\lambda_{\alpha_2}+\dfrac{1}{2}\lambda([x_{\alpha_3}e_{\alpha_3},[x_{\alpha_3}e_{\alpha_3},e_{\alpha_2}]])=\\
    &=\lambda_{\alpha_2}+\dfrac{1}{2}C_{\alpha_3,\alpha_2+\alpha_3}C_{\alpha_3,\alpha_2}x_{\alpha_3}^2\lambda(e_{\alpha_2+2\alpha_3})=\lambda_{\alpha_2}-x_{\alpha_3}^2\lambda_{\alpha_2+2\alpha_3}=\\
    &=\lambda_{\alpha_2}-\dfrac{\mu_{\alpha_2+2\alpha_3}^2}{4\mu_{\alpha_2+2\alpha_3}};\\
    \mu_{\alpha_3+\alpha_4}&=\lambda(e_{\alpha_3+\alpha_4})=\lambda_{\alpha_3+\alpha_4};\\
    \mu_{\alpha_4}&=\lambda(e_{\alpha_4})-\lambda([x,e_{\alpha_3+\alpha_4}])=\lambda_{\alpha_4}-\lambda([x_{\alpha_3}e_{\alpha_3},e_{\alpha_3+\alpha_4}])=\\
    &=\lambda_{\alpha_4}-C_{\alpha_3,\alpha_4}x_{\alpha_3}\lambda(e_{\alpha_3+\alpha_4})=\lambda_{\alpha_4}+x_{\alpha_3}\lambda_{\alpha_3+\alpha_4}=\lambda_{\alpha_4}+\dfrac{\mu_{\alpha_2+\alpha_3}\mu_{\alpha_3+\alpha_4}}{2\mu_{\alpha_2+2\alpha_3}}.
\end{split}
\end{equation*}

In fact, the validity of formula (\ref{FD}) in this example could have been understood without writing out the coordinates explicitly. It is enough to examine Table 1. Since $\mu(e_{\alpha_2+\alpha_3})$ involves only $x_{\alpha_3}$ and $\lambda(e_{\alpha_2+2\alpha_3})$, and since $\lambda(e_{\alpha_2+2\alpha_3}) = \mu(e_{\alpha_2+2\alpha_3})$, the variable $x_{\alpha_3}$ can be expressed as a rational function of $\mu(e_{\alpha_2+\alpha_3})$ and $\mu(e_{\alpha_2+2\alpha_3})$. Moreover, since $\mu(e_{\alpha_2})-\lambda(e_{\alpha_2})$ depends only on $x_{\alpha_3}$ and $\lambda(e_{\alpha_2+2\alpha_3})$, it can be expressed as a rational function of $\mu(e_{\alpha_2+\alpha_3})$ and $\mu(e_{\alpha_2+2\alpha_3})$. Furthermore, $\mu(e_{\alpha_3+\alpha_4})=\lambda(e_{\alpha_3+\alpha_4})$. As $\mu(e_{\alpha_4})-\lambda(e_{\alpha_4})$ involves only $x_{\alpha_3}$ and $\lambda(e_{\alpha_3+\alpha_4})$, it can be expressed as a rational function of $\mu(e_{\alpha_2+\alpha_3})$ and $\mu(e_{\alpha_3+\alpha_4})$.

\end{example}

Now let us proceed to the proof of Theorem \ref{fun}. First, we prove it in the special case of forms whose support consists of a single root. Any such form obviously belongs to $S$.

\begin{lemma}
    \label{uno}
Theorem \textup{\ref{fun}} holds for any $\lambda\in\nt^*$ with $|\supp(\lambda)|=1$.
\end{lemma}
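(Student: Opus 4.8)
The plan is to reduce the single-root case to an explicit computation. Let $\supp(\lambda) = \{\gamma_0\}$, so $\lambda = \lambda_{\gamma_0} e_{\gamma_0}^*$ with $\lambda_{\gamma_0} \neq 0$. First I would identify precisely which roots $\gamma$ satisfy condition (\ref{rk}). Since $\supp(\lambda)$ is a single root, the matrices $A_{\lambda,\gamma}$ and $B_{\lambda,\gamma}$ have a very restricted structure: the entry $\lambda([e_\alpha, e_\beta]) = \lambda_{\gamma_0} C_{\alpha,\beta} [\alpha + \beta = \gamma_0]$ is nonzero only when $\alpha + \beta = \gamma_0$, i.e.\ $\alpha$ is singular to $\gamma_0$. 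Thus the only root whose corresponding row of $A_{\lambda,\gamma_0}$ can be nonzero and relevant is $\gamma = \gamma_0$ itself, together with those $\gamma$ below $\gamma_0$ that appear as a singular summand. The key point is that for a single-root support the tangent action is essentially linear, so condition (\ref{rk}) is automatic for every $\gamma$, as the paper already remarks.

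Next I would compute $\mu = \exp(\chi)\cdot\lambda$ explicitly using the formula for the action $N_M \curvearrowright M^*$. Because $\lambda$ is supported on the single root $\gamma_0$, only the iterated brackets $[\chi,[\chi,\dots,[\chi,e_\gamma]\dots]]$ whose top component lands on $e_{\gamma_0}$ contribute, so
\begin{equation*}
\mu(e_\gamma) = \sum_{i=0}^{\infty} \frac{(-1)^i}{i!}\, \lambda_{\gamma_0}\, c_i(\gamma),
\end{equation*}
where $c_i(\gamma)$ is the coefficient of $e_{\gamma_0}$ in $\ad{\chi}^i(e_\gamma)$, a polynomial in the $x_\alpha$. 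For the root $\gamma = \gamma_0$ the $i=0$ term is $\lambda_{\gamma_0} = \lambda(e_{\gamma_0})$, and all higher terms involve strictly positive powers of the $x_\alpha$ attached to brackets that raise the root height toward $\gamma_0$; for roots $\gamma \succ \gamma_0$ there is no way to reach $e_{\gamma_0}$ by raising (since $\gamma_0 - \gamma$ is not a sum of positive roots), so I expect $\mu(e_\gamma) = 0$ there, mirroring the observation in Example~\ref{exN} that coordinates strictly above the relevant root vanish or are constant.

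The heart of the argument is then to express the offending monomials in the $x_\alpha$ through the higher coordinates $\mu(e_\beta)$ with $\beta \succ \gamma$. Exactly as in Example~\ref{exN}, the variables $x_\alpha$ that enter $\mu(e_\gamma) - \lambda(e_\gamma)$ are precisely those indexing the singular decompositions $\alpha + \beta = \gamma_0$, and each such $x_\alpha$ can be solved for rationally from a coordinate $\mu(e_\beta)$ with $\beta \succ \gamma$, since that coordinate depends on $x_\alpha$ linearly (to leading order) with coefficient a nonzero multiple of $\lambda_{\gamma_0}$. Substituting these rational expressions back yields the desired rational function $F_{D,\gamma} \in \overline{\Fp}(y_\beta, \beta \succ \gamma)$ with $\mu(e_\gamma) = \lambda(e_\gamma) + F_{D,\gamma}(\mu(e_\beta), \beta \succ \gamma)$.

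The main obstacle I anticipate is bookkeeping the iterated-bracket contributions cleanly: for the non-minimal roots $\gamma$ the polynomial $c_i(\gamma)$ may involve several $x_\alpha$ at once, and I must verify that each of the needed $x_\alpha$ is recoverable from a \emph{single} higher coordinate $\mu(e_\beta)$ with $\beta \succ \gamma$ without circular dependence. This is guaranteed structurally because $\gamma$ satisfies condition (\ref{rk}), which by the reformulation following Proposition~\ref{suf} says exactly that $(x\cdot\lambda)(e_\gamma)$ is a combination of the $(x\cdot\lambda)(e_\beta)$ with $\beta \succ \gamma$; lifting this linear-level statement to the full exponential action is the step requiring care, and I would handle it by inducting downward on the order $\succ$, expressing each $x_\alpha$ from the first higher coordinate in which it appears, precisely as the worked example does for $x_{\alpha_3}$.
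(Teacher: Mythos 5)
There is a genuine gap, and it sits exactly at what you call the ``heart of the argument.'' Your claim that the variables $x_\alpha$ entering $\mu(e_\gamma)-\lambda(e_\gamma)$ are precisely those indexing singular decompositions $\alpha+\beta=\gamma_0$ is false. A monomial of $\mu(e_\gamma)$ comes from a chain $\gamma,\ \gamma+\alpha_1,\ \gamma+\alpha_1+\alpha_2,\ \dots$ of positive roots ending at $\gamma_0$, and only the \emph{last} increment $\alpha_n$ is forced to lie in $S_{\gamma_0}$; the earlier increments $\alpha_1,\dots,\alpha_{n-1}$ need not be singular to $\gamma_0$. (Take $\gamma_0$ the highest root $2\alpha_1+3\alpha_2+4\alpha_3+2\alpha_4$ and $\gamma=\alpha_4$: chains such as $\alpha_4,\ \alpha_3+\alpha_4,\ \alpha_2+\alpha_3+\alpha_4,\dots$ contribute monomials containing $x_{\alpha_3}$, $x_{\alpha_2}$, which are not singular to $\gamma_0$.) These non-singular variables cannot be ``solved for rationally from a coordinate $\mu(e_\beta)$ with $\beta\succ\gamma$'': the linear parts of all coordinates involve only singular variables, so a non-singular $x_\alpha$ never appears linearly with a nonzero constant coefficient, and your downward induction has nothing to invert. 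Condition (\ref{rk}) does not rescue this, because (\ref{rk}) is a statement about the tangent (linear) action only; it says nothing about the higher-order monomials that the exponential produces. (A side error: condition (\ref{rk}) is not ``automatic for every $\gamma$'' here; it holds exactly for $\gamma\notin S_{\gamma_0}$, and for singular $\gamma$ formula (\ref{FD}) genuinely fails.)

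The paper's proof supplies precisely the ingredient your plan lacks. It eliminates only the \emph{singular} variables, in decreasing order, using $\mu(e_\beta)=C\lambda(e_{\gamma_0})x_{\gamma_0-\beta}+F(x)$ for $\beta\in S_{\gamma_0}$, arriving at $\mu(e_\gamma)=G\bigl(\mu(e_{\beta_i});\ x_\alpha,\ \alpha\notin S_{\gamma_0}\bigr)$, where the non-singular variables may very well still be present. The crux is then to show $G$ cannot actually depend on any $x_\alpha$ with $\alpha\notin S_{\gamma_0}$, and this is done by contradiction via a dimension count: if it did, one could prescribe independently the values of $\mu(e_\gamma)$ and of all $2k=|S_{\gamma_0}|$ singular coordinates, giving a dominant map from $\overline{N}\cdot\lambda$ onto $\overline{\Fp}^{\,2k+1}$, contradicting $\dim\overline{N}\cdot\lambda=\dim\nt\cdot\lambda=|S_{\gamma_0}|=2k$. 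Your proposal contains no substitute for this orbit-dimension argument, so as written it proves the lemma only in the special cases where no non-singular variable occurs, and the general single-root case remains open.
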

\begin{proof}
    Denote $D=\supp(\lambda)=\{\delta\}$. Note that in this case, condition (\ref{rk}) depends only on $D$, not on the specific $\lambda$, and it holds for those $\gamma$ that are not singular to $\delta$. Let $\mu = \exp(\chi)\cdot\lambda$. We need to prove that for any $\gamma \notin S_\delta$, the coordinate $\mu(e_\gamma)$ is a function of the coordinates ${\mu(e_\beta), \beta \succ \gamma}$. Observe that if $\gamma \succ \delta$, then $\mu(e_\gamma) = 0$, and if $\gamma = \delta$, then $\mu(e_\delta) = \lambda(e_\delta)$. For these two cases, the condition is obviously satisfied.  It remains to prove it for $\gamma \prec \delta$.

For any $\beta \in S_\delta$, the following equality holds:
\begin{equation*}
\begin{split}
\mu(e_\beta) =& -\lambda([x,e_\beta]) + \dfrac{1}{2!}\lambda([x, [x, e_\beta]]) - \dfrac{1}{3!}\lambda([x,[x,[x,e_\beta]]]) + \ldots\\ =&-\lambda([x_{\delta-\beta}e_{\delta-\beta},e_\beta]) + F(x) = -\lambda([e_{\delta-\beta},e_\beta]) x_{\delta-\beta} + F(x) = C\lambda(e_\delta) x_{\delta-\beta} + F(x),
\end{split}
\end{equation*}
where $C \neq 0$ and $F(x)$ is a polynomial in variables $x_\alpha$ with $\alpha < \delta - \beta$. Consequently,
\begin{equation}\label{xb}
x_{\delta-\beta} = \dfrac{\mu(e_\beta) - F(x)}{C\lambda(e_\delta)}
\end{equation}
Let $\beta_1, \beta_2, \ldots, \beta_{2k}$ be the singular roots to $\delta$, listed in decreasing order with respect to lexicographical order. They come in pairs such that $\beta_i + \beta_{2k-i+1} = \delta$. Now take $\gamma \notin S_\delta$ with $\delta \succ \gamma$ and consider $\mu(e_\gamma)$. Suppose, for contradiction, that there is no rational function $H$ over $\overline{\Fp}$ in the independent variables $t_\beta, \beta \succ \gamma$ such that $\mu(e_\gamma) = H(\mu(e_\beta), \beta \succ \gamma)$.

The coordinate $\mu(e_\gamma)$ is a polynomial in the $x_\beta$ with coefficients from $\overline{\Fp}$. It does not contain the variable $x_{\beta_{2k-i+1}}$ if $\gamma \succ \beta_i$. Let $\beta_{2k-j+1}$ be the largest root in $S_\delta$ such that $x_{\beta_{2k-j+1}}$ appears in $\mu(e_\gamma)$. Using formula (\ref{xb}), we express $x_{\beta_{2k-j+1}}$ in terms of $\mu(e_{\beta_j})$ and ${x_\alpha, \alpha < \beta_{2k-j+1}}$. Thus, $\mu(e_\gamma)$ becomes a polynomial that no longer involves $x_{\beta_{2k-j+1}}$. We perform a similar elimination successively for $\beta_{2k-j+2}, \beta_{2k-j+3}$, and so on, until $\beta_{2k}$. Note that at each step, a variable $x_{\beta_i}$ that was initially absent or already removed cannot reappear, because we proceed in decreasing order of the singular roots. As a result, $\mu(e_\gamma)$ is represented as a polynomial that does not contain $x_\beta, \beta \in S_\delta$, i.e., \[\mu(e_\gamma) = G(\mu(e_{\beta_i}), i=1,\ldots,j;~x_\alpha, \alpha \notin S_\delta)\] for some polynomial $G \in \overline{\Fp}[t_{\beta_i}, i=1,\ldots,j;~x_\alpha, \alpha \notin S_\delta]$. Observe that $\beta_i \succ \gamma$ for all $i=1,\ldots,j$.

By our assumption, $\mu(e_\gamma)$ cannot be expressed solely in terms of the subsequent coordinates. This implies that $G$ is not identically constant and actually depends on at least one variable $x_\alpha$. Then there exists a non-empty Zariski open subset $U \subseteq \overline{\Fp}^j$ such that for any $c = (c_1, c_2, \ldots, c_j) \in U$, the specialized polynomial $G(c_i, i = 1,\ldots,j;~x_\alpha, \alpha \notin S_\delta)$ is non-constant. Fix such a tuple $c_1, c_2, \ldots, c_j \in \overline{\Fp}$. Choose an index $\varepsilon \notin S_\delta$ and constants $c_\alpha \in \overline{\Fp}, \alpha \notin S_\delta, \alpha \neq \varepsilon$, so that the one-variable polynomial \[g(x_\varepsilon) = G(c_i, i = 1,\ldots,j;~x_\varepsilon, c_\alpha, \alpha \notin S_\delta, \alpha \neq \varepsilon)\] is non-constant. The field is algebraically closed, for any chosen $c_0 \in \overline{\Fp}$ there exists a $c_\varepsilon \in \overline{\Fp}$ such that $g(c_\varepsilon) = c_0$. Fix $c_0$ and the corresponding $c_\varepsilon$. Now, choose arbitrary values $c_{j+1}, \ldots, c_{2k} \in \overline{\Fp}$. Substituting the values $c_\varepsilon, c_\alpha, \alpha \notin S_\delta$ into formula~(\ref{xb}), we can determine values for $x_{\beta_{2k}}, x_{\beta_{2k-1}}, \ldots, x_{\beta_1}$ sequentially, in such a way that $\mu(e_{\beta_1}) = c_1$, $\mu(e_{\beta_2}) = c_2$, $\ldots$ and $\mu(e_{\beta_{2k}}) = c_{2k}$. For this assignment, we obtain
\[
    \mu(e_\gamma) = G(c_i, i = 1, \ldots, j;~c_\varepsilon, c_\alpha, \alpha \notin S_\delta) = c_0.
\]

Therefore, there exists a non-empty Zariski open subset $V\subseteq \overline{\Fp}^{2k+1}$ such that for any $(c_0, c_1, \ldots, c_{2k}) \in V$, we can find an element $\overline\mu \in \overline{N}\cdot\lambda$ such that $\overline\mu(e_\gamma) = c_1$, $\overline\mu(e_{\beta_1}) = c_2$, $\ldots$, $\overline\mu(e_{\beta_{2k}}) c_{2k}$. The orbit $\overline{N}\cdot\lambda$ is an algebraic variety. Our construction yields a surjection from $\overline{N}\cdot\lambda$ to $\overline{\Fp}^{2k+1}$, which implies $\dim \overline{N}\cdot\lambda \ge 2k+1$. However, \[\dim \overline{N}\cdot\lambda = \dim \overline{\nt}\cdot\lambda = \dim \nt\cdot\lambda = |S_\delta| = 2k.\] This contradiction completes the proof.
\end{proof}

We introduce further notation. Let $\alpha,\beta\in\Phi^+$. Denote $T_{\alpha,\beta}(x):=(\exp(\chi).e_\beta^*)(e_\alpha)$ in $\overline{\Fp}[x_\gamma,\gamma\in\Phi^+]$. It is clear that if $\lambda\in\nt^*, \alpha\in\Phi^+$, then
\begin{align*}
    (\exp(\chi)\cdot\lambda)(e_\alpha) 
    &= \left(\exp(\chi)\cdot\left(\sum_{\beta\in \supp(\lambda)}\lambda(e_\beta)e_\beta^*\right)\right)(e_\alpha) \\
    &= \sum_{\beta\in \supp(\lambda)}\lambda(e_\beta)(\exp(\chi).e_\beta^*)(e_\alpha)\\
    &= \sum_{\beta\in \supp(\lambda)}\lambda(e_\beta)T_{\alpha,\beta}(x).
\end{align*}

Now consider the following construction (cf. the construction of the set $\zeta_{D,\gamma}$ used in the proof of the sufficient condition in Proposition \ref{suf}). Let $\lambda\in S,~D = \supp(\lambda)$, and let the root $\gamma\in\Phi^+$ satisfy condition (\ref{rk}). Starting from $D$ and $\gamma$, we construct a set $\zeta'_{D,\gamma}\subset\Phi^+\times\Phi^+$: by definition, it consists of those pairs $(\alpha,\beta)$ for which the polynomial $T_{\alpha,\beta}(x)$ can be expressed as a rational function of $\{(\exp(\chi)\cdot\lambda)(e_\delta)\}_{\delta\succ\gamma}$.

\begin{example}

Table 1 helps visualize  the structure of the polynomials $T_{\alpha,\beta}(x)$. One simply examines the cell in row $\alpha$ and column $\beta$. For each pair $\gamma,\delta\in\Phi^+$ such that $\gamma+\delta\in\Phi^+$, denote the corresponding non-zero structure constant by $C_{\gamma,\delta}$, so that $[e_\gamma,e_\delta]=C_{\gamma,\delta}e_{\gamma+\delta}$. Each monomial in $T_{\alpha,\beta}(x)$ has the form $$(-1)^{n+1}\dfrac{1}{n!}C_{\gamma_1,\alpha}C_{\gamma_2,\alpha+\gamma_1}\ldots C_{\gamma_n,\alpha+\gamma_1+\ldots+\gamma_{n-1}}x_{\gamma_1}x_{\gamma_2}\ldots x_{\gamma_n},$$
where $\gamma_1,\ldots,\gamma_n\in\Phi^+,~\alpha+\gamma_1+\ldots+\gamma_i\in\Phi^+$ for all $i$ from $1$ to $n$, and $\alpha+\gamma_1+\ldots+\gamma_n=\beta$. Hence, it suffices to consider all possible decompositions of $\beta-\alpha$ into a sum of positive roots. For example,
\begin{center}
    $T_{\alpha_3,\alpha_2+2\alpha_3+\alpha_4}(x)=-x_{\alpha_2+\alpha_3+\alpha_4}+\dfrac{1}{2}x_{\alpha_2}x_{\alpha_3+\alpha_4}+\dfrac{3}{2}x_{\alpha_4}x_{\alpha_2+\alpha_3}-\dfrac{2}{3}x_{\alpha_2}x_{\alpha_3}x_{\alpha_4}$.
\end{center}

Note that if $\alpha\nleq\beta$, then $T_{\alpha,\beta}(x)=0$, while $T_{\alpha,\alpha}(x)=1$. In all other cases, $T_{\alpha,\beta}(x)$ has positive degree. We call a pair $(\alpha,\beta)$ nontrivial if $\alpha<\beta$ (i.e., $T_{\alpha,\beta}(x)\neq const$). Obviously, each trivial pair belongs to all $\zeta'_{D,\gamma}$.

Let us refer to Example \ref{exN}. There $D=\supp(\lambda)=\{\alpha_4,\alpha_3+\alpha_4,\alpha_2,\alpha_2+2\alpha_3\}$. The sets $\zeta'_{D,\alpha_2+2\alpha_3}$ and $\zeta'_{D,\alpha_2+\alpha_3+\alpha_4}$ contain no nontrivial pairs. Indeed, there are no $\beta$ in $D$ such that $\beta>\alpha_2+2\alpha_3$ or $\beta>\alpha_2+\alpha_3+\alpha_4$. Since $$(\exp(\chi)\cdot\lambda)(e_{\alpha_2+\alpha_3})=2x_{\alpha_3}\lambda(e_{\alpha_2+2\alpha_3})=2x_{\alpha_3}(\exp(\chi)\cdot\lambda)(e_{\alpha_2+2\alpha_3})$$ and $T_{\alpha_2,\alpha_2+2\alpha_3}(x)=-x_{\alpha_3}^2$, the pair $(\alpha_2,\alpha_2+2\alpha_3)$ belongs to $\zeta'_{D,\alpha_2}$. This pair also lies in $\zeta'_{D,\alpha_3+\alpha_4}$ and $\zeta'_{D,\alpha_4}$. Moreover, $\zeta'_{D,\alpha_4}$ contains one additional nontrivial pair, namely, $(\alpha_4,\alpha_3+\alpha_4)$, because $T_{\alpha_4,\alpha_3+\alpha_4}(x)=x_{\alpha_3}$. The remaining pairs of the form $(\gamma,\alpha)$ with $\alpha\in D$ and $\gamma$ satisfies condition (\ref{rk}) are trivial and therefore belong to all $\zeta'_{D,\gamma}$. The lemma below explains why we focus precisely on these pairs.
\end{example}

\begin{lemma}
    \label{zeta'}
Let $\lambda\in S, D=\supp(\lambda)$\textup, and let $\gamma\in\Phi^+$ satisfy condition \textup{(\ref{rk})}\textup. Suppose that for every $\alpha\in D$\textup, the pair $(\gamma,\alpha)$ belongs to $\zeta'_{D,\gamma}$\textup, and formula \textup{(\ref{FD})} holds for all $\gamma'\succ\gamma$ that satisfy condition \textup{(\ref{rk})}. Then formula \textup{(\ref{FD})} also holds for $\lambda$\textup, $D$ and $\gamma$.
\end{lemma}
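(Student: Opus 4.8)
The plan is to prove formula (\ref{FD}) by directly expanding the coordinate $\mu(e_\gamma)$ and reading off the hypotheses. Recall the identity $(\exp(\chi)\cdot\lambda)(e_\alpha)=\sum_{\beta\in\supp(\lambda)}\lambda(e_\beta)T_{\alpha,\beta}(x)$ established just after the definition of $T_{\alpha,\beta}$. Specializing to $\alpha=\gamma$ gives $\mu(e_\gamma)=\sum_{\alpha\in D}\lambda(e_\alpha)\,T_{\gamma,\alpha}(x)$, so the whole statement reduces to showing that this sum equals $\lambda(e_\gamma)$ plus a rational function of $\{\mu(e_\beta)\}_{\beta\succ\gamma}$.

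First I would isolate the constant term. Using the structural facts $T_{\gamma,\gamma}(x)=1$ and $T_{\gamma,\alpha}(x)=0$ whenever $\gamma\nleq\alpha$, only the indices $\alpha\in D$ with $\alpha\geq\gamma$ survive. The diagonal term $\alpha=\gamma$ occurs precisely when $\gamma\in D$, contributing exactly $\lambda(e_\gamma)\cdot1=\lambda(e_\gamma)$; when $\gamma\notin D$ this term is absent, but then $\lambda(e_\gamma)=0$, so in either case the constant part is exactly $\lambda(e_\gamma)$. Every remaining surviving index satisfies $\alpha>\gamma$ in the partial order, hence $\alpha\succ\gamma$, and thus $\mu(e_\gamma)=\lambda(e_\gamma)+\sum_{\alpha\in D,\,\alpha\succ\gamma}\lambda(e_\alpha)\,T_{\gamma,\alpha}(x)$.

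Next I would invoke the hypothesis that $(\gamma,\alpha)\in\zeta'_{D,\gamma}$ for each $\alpha\in D$. By the very definition of $\zeta'_{D,\gamma}$, this means that every $T_{\gamma,\alpha}(x)$ occurring in the residual sum is a rational function of $\{\mu(e_\delta)\}_{\delta\succ\gamma}$ over $\overline{\Fp}$. Since the $\lambda(e_\alpha)$ are scalars, the finite combination $\sum_{\alpha\in D,\,\alpha\succ\gamma}\lambda(e_\alpha)\,T_{\gamma,\alpha}(x)$ is again such a rational function, and taking it as $F_{D,\gamma}$ yields formula (\ref{FD}). The hypothesis that (\ref{FD}) already holds for all $\gamma'\succ\gamma$ satisfying (\ref{rk}) is what situates this lemma inside the downward induction on $\gamma$ by which Theorem \ref{fun} is proved: it guarantees that the coordinates $\mu(e_\delta)$ with $\delta\succ\gamma$ are coherently governed, so that expressing the $T_{\gamma,\alpha}$ through them is legitimate at the point where the lemma is applied.

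The main point requiring care is bookkeeping rather than any deep difficulty: one must argue correctly that the only genuinely nonconstant contributions come from indices strictly above $\gamma$ in the order, and that the single diagonal term is exactly $\lambda(e_\gamma)$. Once the expansion $\mu(e_\gamma)=\sum_{\alpha\in D}\lambda(e_\alpha)T_{\gamma,\alpha}(x)$ is combined with the membership $(\gamma,\alpha)\in\zeta'_{D,\gamma}$, the conclusion is immediate; there is no hidden analytic obstacle, the substantive content having been front-loaded into the verification (assumed here) that these pairs lie in $\zeta'_{D,\gamma}$.
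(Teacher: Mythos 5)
Your proof is correct, and it shares the paper's skeleton: the expansion $\mu(e_\gamma)=\sum_{\alpha\in D}\lambda(e_\alpha)T_{\gamma,\alpha}(x)=\lambda(e_\gamma)+\sum_{\alpha\in D,\,\alpha\succ\gamma}\lambda(e_\alpha)T_{\gamma,\alpha}(x)$, followed by the use of $(\gamma,\alpha)\in\zeta'_{D,\gamma}$ to express each $T_{\gamma,\alpha}(x)$ as a rational function of $\{\mu(e_\delta)\}_{\delta\succ\gamma}$. The genuine difference is in how the coefficients $\lambda(e_\alpha)$ are handled, and it explains why you never actually used the lemma's second hypothesis. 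You keep the $\lambda(e_\alpha)$ as scalars inside $F_{D,\gamma}$; this does prove the literal existence statement in (\ref{FD}), but the resulting function is attached to $\lambda$ itself, since the particular values $\lambda(e_\alpha)$ occur as its coefficients. The paper instead notes that each $\alpha\in D$ with $\alpha\succ\gamma$ satisfies condition (\ref{rk}) (because $\lambda\in S$), so the inductive hypothesis applies to $\gamma'=\alpha$ and yields $\lambda(e_\alpha)=\mu(e_\alpha)-F_{D,\alpha}(\mu(e_\beta),\beta\succ\alpha)$; substituting this makes $F_{D,\gamma}$ a composite of previously constructed functions only, hence dependent on the support $D$ rather than on the particular form $\lambda$ --- which is what the notation $F_{D,\gamma}$ is meant to convey, and what is invoked in Theorem \ref{dif} when the functions are normalized so that $F_{D,\gamma}=F_{D',\gamma}$ whenever $D$ and $D'$ agree above $\gamma$. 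Your weaker version still suffices for the paper's end goal, because that normalization is re-justified inside the proof of Theorem \ref{dif} by an independent observation (the relevant polynomials in the variables $x_\alpha$ depend only on the coordinates of $\lambda$ above $\gamma$). One correction, though: your closing explanation of the role of the unused hypothesis is inaccurate. The legitimacy of expressing the $T_{\gamma,\alpha}$ through $\{\mu(e_\delta)\}_{\delta\succ\gamma}$ comes entirely from the $\zeta'_{D,\gamma}$ assumption; in the paper's argument the inductive hypothesis serves a different purpose, namely converting the scalars $\lambda(e_\alpha)$ into rational expressions in the higher coordinates of $\mu$.
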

\begin{proof}
Consider $$(\exp(\chi)\cdot\lambda)(e_\gamma)=\sum_{\alpha\in D}\lambda(e_\alpha)T_{\gamma,\alpha}(x)=\lambda(e_\gamma)+\sum_{\alpha\in D,\alpha\succ\gamma}\lambda(e_\alpha)T_{\gamma,\alpha}(x).$$ Let $\alpha\in D$ with $\alpha\succ\gamma$. Since $(\gamma,\alpha)\in\zeta'_{D,\gamma}$, the polynomial $T_{\gamma,\alpha}(x)$ can be expressed as a rational function of $\{(\exp(\chi)\cdot\lambda)(e_\beta)\}_{\beta\succ\gamma}$. By the definition of $S$, the root $\alpha$ satisfies condition (\ref{rk}). Therefore, by the inductive hypothesis, $\lambda(e\alpha)$ can also be expressed as a rational function of $\{(\exp(\chi)\cdot\lambda)(e_\beta)\}_{\beta\succ\gamma}$, because $$\lambda(e_\alpha)=(\exp(\chi)\cdot\lambda)(e_\alpha)-F_{D,\alpha}((\exp(\chi)\cdot\lambda)(e_\beta),\beta\succ\alpha).$$ Hence, the difference $(\exp(\chi)\cdot\lambda)(e_\gamma)-\lambda(e_\gamma)$ is a rational function of $\{(\exp(\chi)\cdot\lambda)(e_\beta)\}_{\beta\succ\gamma}$, which means condition (\ref{FD}) is satisfied.
\end{proof}

We are now ready to prove Theorem \ref{fun}.

\textsc{Proof of Theorem \ref{fun}.} If $D=\{\alpha\}$, then Theorem~\ref{fun} holds by Lemma~\ref{uno}. Indeed, in this case $(\exp(\chi)\cdot\lambda)(e_\gamma) = \lambda(e_\alpha)\cdot T_{\gamma,\alpha}(x)$. Moreover, $T_{\gamma,\alpha}(x)=0$ for $\gamma\succ\alpha$, while $T_{\gamma,\alpha}(x)=1$ for $\gamma=\alpha$. Finally, for $\gamma\prec\alpha$ satisfying condition (\ref{rk}), the polynomial $T_{\gamma,\alpha}(x)$ can be expressed as a rational function of $\{(\exp(\chi)\cdot\lambda)(e_\delta)\}_{\delta\succ\gamma}$.

Now we will prove formula (\ref{FD}) step by step for all $\lambda\in S$ by considering the supports from the list one after another. (Note that most supports can be treated similarly to Example~\ref{exN}; see below.) Recall that
\begin{center}
$\lambda=\sum_{\beta\in\Phi^+}\lambda_{\beta}e_{\beta}^*\in\nt^*,~\mu=\sum_{\beta\in\Phi^+}\mu_{\beta}\otimes e_{\beta}^*=\exp(\chi)\cdot\lambda\in M^*$.
\end{center}
\begin{itemize}
    \item Suppose the largest root (with respect to the lexicographic order) in the support is smaller than $\alpha_2+\alpha_3+\alpha_4$. For all such supports and for every $\gamma\in\Phi^+$ satisfying condition (\ref{rk}), Lemma \ref{zeta'} applies, so the proof is complete. (Indeed, for all relevant $\gamma$, the polynomials $T_{\gamma,\alpha}(x)$ are, up to a non-zero scalar factor, equal to $x_{\alpha_2}$, $x_{\alpha_3}$, $x_{\alpha_4}$, $0$, or $1$.)
    \item Suppose the largest root in the support equals $\alpha_2+\alpha_3+\alpha_4$. There are only two such supports in the list: $\{\alpha_2+\alpha_3+\alpha_4\}$ and $\{\alpha_2+\alpha_3+\alpha_4,\alpha_3\}$. Since Lemma \ref{uno} covers the case of one-element supports, the pair $(\alpha_3,\alpha_2+\alpha_3+\alpha_4)$ belongs to $\zeta'_{D,\alpha_3}$ regardless of whether $\alpha_3$ lies in $D$ (the presence of simple roots in the support does not affect $\zeta'_{D,\gamma}$). The conditions of Lemma \ref{zeta'} are also satisfied, so nothing remains to prove in this case.
    \item Next, suppose the largest root in the support equals $\alpha_2+2\alpha_3$. Here the first nontrivial case appears, i.e., Theorem \ref{fun} cannot be proved for all supports of this type by a straightforward application of Lemma~\ref{zeta'}. Such a support may also contain $\alpha_2+\alpha_3+\alpha_4$ and $\alpha_4$. The complication arises because the table entries at positions $(\alpha_3,\alpha_2+2\alpha_3)$ and $(\alpha_4,\alpha_2+\alpha_3+\alpha_4)$ both involve the same root $\alpha_2+\alpha_3$. However, $T_{\alpha_3,\alpha_2+2\alpha_3}(x)\neq T_{\alpha_4,\alpha_2+\alpha_3+\alpha_4}(x)$. Consequently, a situation may occur where $\alpha_4$ satisfies condition (\ref{rk}) but Lemma \ref{zeta'} cannot be applied directly. Therefore, in this case we must compute the coordinates of $\mu$ explicitly:
    \begin{equation*}
    \begin{split}
        \mu_{\alpha_2+2\alpha_3}=&~\lambda(e_{\alpha_2+2\alpha_3})=\lambda_{\alpha_2+2\alpha_3},\\
        \mu_{\alpha_2+\alpha_3+\alpha_4}=&~\lambda(e_{\alpha_2+\alpha_3+\alpha_4})=\lambda_{\alpha_2+\alpha_3+\alpha_4},\\
        \mu_{\alpha_2+\alpha_3}=&-\lambda([x,e_{\alpha_2+\alpha_3}])=\\
        =&-\lambda([x_{\alpha_4}e_{\alpha_4}+x_{\alpha_3}e_{\alpha_3},e_{\alpha_2+\alpha_3}])=-C_{\alpha_4,\alpha_2+\alpha_3}x_{\alpha_4}\lambda(e_{\alpha_2+\alpha_3+\alpha_4})-\\
        &-C_{\alpha_3,\alpha_2+\alpha_3}x_{\alpha_3}\lambda(e_{\alpha_2+2\alpha_3})=x_{\alpha_4}\lambda_{\alpha_2+\alpha_3+\alpha_4}-2x_{\alpha_3}\lambda_{\alpha_2+2\alpha_3},\\
        \mu_{\alpha_3+\alpha_4}=&-\lambda([x,e_{\alpha_3+\alpha_4}])=-\lambda([x_{\alpha_2}e_{\alpha_2},e_{\alpha_3+\alpha_4}])=-C_{\alpha_2,\alpha_3+\alpha_4}x_{\alpha_2}\lambda(e_{\alpha_2+\alpha_3+\alpha_4})\\
        =&-x_{\alpha_2}\lambda_{\alpha_2+\alpha_3+\alpha_4},\\
        \mu_{\alpha_3}=&-\lambda([x,e_{\alpha_3}])+\dfrac{1}{2}\lambda([x,[x,e_{\alpha_3}]])=-\lambda([x_{\alpha_2+\alpha_3}e_{\alpha_2+\alpha_3},e_{\alpha_3}])+\\
        &+\dfrac{1}{2}\lambda([x_{\alpha_2}e_{\alpha_2},[x_{\alpha_3}e_{\alpha_3}+x_{\alpha_4}e_{\alpha_4},e_{\alpha_3}])+\dfrac{1}{2}\lambda([x_{\alpha_4}e_{\alpha_4},[x_{\alpha_2}e_{\alpha_2},e_{\alpha_3}])\\
        =&-C_{\alpha_2+\alpha_3}x_{\alpha_2+\alpha_3}\lambda(e_{\alpha_2+2\alpha_3})+\dfrac{1}{2}C_{\alpha_2,\alpha_3}C_{\alpha_3,\alpha_2+\alpha_3}x_{\alpha_2}x_{\alpha_3}\lambda(e_{\alpha_2+2\alpha_3})+\\
        &+\dfrac{1}{2}(C_{\alpha_4,\alpha_3}C_{\alpha_2,\alpha_3+\alpha_4}+C_{\alpha_2,\alpha_3}C_{\alpha_4,\alpha_2+\alpha_3})x_{\alpha_2}x_{\alpha_4}\lambda(e_{\alpha_2+\alpha_3+\alpha_4})\\
        =&~2x_{\alpha_2+\alpha_3}\lambda_{\alpha_2+2\alpha_3}+x_{\alpha_2}x_{\alpha_3}\lambda_{\alpha_2+2\alpha_3}-x_{\alpha_2}x_{\alpha_4}\lambda_{\alpha_2+\alpha_3+\alpha_4},\\
        \mu_{\alpha_4}=&~\lambda(e_{\alpha_4})-\lambda([x,e_{\alpha_4}])+\frac{1}{2}\lambda([x,[x,e_{\alpha_4}]])=\lambda_{\alpha_4}-\lambda([x_{\alpha_2+\alpha_3}e_{\alpha_2+\alpha_3},e_{\alpha_4}])+\\
        &+\dfrac{1}{2}\lambda([x_{\alpha_3}e_{\alpha_3},[x_{\alpha_2}e_{\alpha_2},e_{\alpha_4}]])=\lambda_{\alpha_4}-C_{\alpha_2+\alpha_3,\alpha_4}x_{\alpha_2+\alpha_3}\lambda(e_{\alpha_2+\alpha_3+\alpha_4})+\\
        &+\dfrac{1}{2}C_{\alpha_3,\alpha_4}C_{\alpha_2,\alpha_3+\alpha_4}x_{\alpha_2}x_{\alpha_3}\lambda(e_{\alpha_2+\alpha_3+\alpha_4})=\lambda_{\alpha_4}+x_{\alpha_2+\alpha_3}\lambda_{\alpha_2+\alpha_3+\alpha_4}+\\
        &+\dfrac{1}{2}x_{\alpha_2}x_{\alpha_3}\lambda_{\alpha_2+\alpha_3+\alpha_4}=\lambda_{\alpha_4}+\dfrac{\mu_{\alpha_2+\alpha_3}\mu_{\alpha_3+\alpha_4}-\mu_{\alpha_2+\alpha_3+\alpha_4}\mu_{\alpha_3}}{2\mu_{\alpha_2+2\alpha_3}}.
    \end{split}
    \end{equation*}

    This shows that formula (\ref{FD}) holds for the root $\alpha_4$.

    \item Now suppose the largest root in the support equals $\alpha_2+2\alpha_3+\alpha_4$ or $\alpha_2+2\alpha_3+2\alpha_4$. In these cases, the statement also follows almost trivially from Lemma ~\ref{uno} and Lemma~\ref{zeta'}. The only nontrivial situation occurs for the support
    \begin{center}
        $D=\{\alpha_2+2\alpha_3+2\alpha_4,\alpha_2+2\alpha_3,\alpha_2\}$.
    \end{center}
     At first glance, it might seem that $T_{\alpha_2,\alpha_2+2\alpha_3}(x)$ cannot be expressed as a rational function in $\{(\exp(\chi)\cdot\lambda)(e_\beta)\}_{\beta\succ\alpha_2}$. However, this is not the case. From Lemma \ref{uno} we know that $T_{\alpha_2+\alpha_3,\alpha_2+2\alpha_3+2\alpha_4}(x)$ can be expressed as a rational function of \[\{(\exp(\chi)\cdot\lambda)(e_\beta)\}_{\beta\succ\alpha_2+\alpha_3+\alpha_4}.\] Hence, $x_{\alpha_3}$ can be recovered from $(\exp(\chi)\cdot\lambda)(e_{\alpha_2+\alpha_3})$ and $T_{\alpha_2+\alpha_3,\alpha_2+2\alpha_3+2\alpha_4}(x)$. Since $T_{\alpha_2,\alpha_2+2\alpha_3}(x)$ depends only on $x_{\alpha_3}$.
    \item If the largest root in the support equals $\alpha_1$ or $\alpha_1+\alpha_2$, then the nontrivial case is when $D$ contains $\alpha_1+\alpha_2,\alpha_2+2\alpha_3,\alpha_2+\alpha_3+\alpha_4$. This case follows immediately from the formulas in Example \ref{exam1} together with the case $D=\{\alpha_2+2\alpha_3,\alpha_2+\alpha_3+\alpha_4\}$.
    \item If the largest root in the support equals $\alpha_1+\alpha_2+\alpha_3$, then all nontrivial cases reduce to $D=\{\alpha_1+\alpha_2+\alpha_3,\alpha_2+2\alpha_3+\alpha_4,\alpha_2\}$. Let us write down the relevant coordinates:
    \begin{equation*}
    \begin{split}
        \mu_{\alpha_1+\alpha_2+\alpha_3}&=\lambda_{\alpha_1+\alpha_2+\alpha_3},\\
        \mu_{\alpha_1+\alpha_2}&=-x_{\alpha_3}\lambda_{\alpha_1+\alpha_2+\alpha_3},\\
        \mu_{\alpha_1}&=-x_{\alpha_2+\alpha_3}\lambda_{\alpha_1+\alpha_2+\alpha_3}+\dfrac{1}{2}x_{\alpha_2}x_{\alpha_3}\lambda_{\alpha_1+\alpha_2+\alpha_3},\\
        \mu_{\alpha_2+2\alpha_3+\alpha_4}&=\lambda_{\alpha_2+2\alpha_3+\alpha_4},\\
        \mu_{\alpha_2+2\alpha_3}&=-x_{\alpha_4}\lambda_{\alpha_2+2\alpha_3+\alpha_4},\\
        \mu_{\alpha_2+\alpha_3+\alpha_4}&=x_{\alpha_3}\lambda_{\alpha_2+2\alpha_3+\alpha_4}=-\dfrac{\mu_{\alpha_1+\alpha_2}\mu_{\alpha_2+2\alpha_3+\alpha_4}}{\mu_{\alpha_1+\alpha_2+\alpha_3}},\\
        \mu_{\alpha_2+\alpha_3}&=x_{\alpha_1}\lambda_{\alpha_1+\alpha_2+\alpha_3}+x_{\alpha_3+\alpha_4}\lambda_{\alpha_2+2\alpha_3+\alpha_4}-\dfrac{3}{2}x_{\alpha_3}x_{\alpha_4}\lambda_{\alpha_2+2\alpha_3+\alpha_4},\\
        \mu_{\alpha_2}&=\lambda_{\alpha_2}-x_{\alpha_1}x_{\alpha_3}\lambda_{\alpha_1+\alpha_2+\alpha_3}-x_{\alpha_3+\alpha_4}x_{\alpha_3}\lambda_{\alpha_2+2\alpha_3+\alpha_4}+\dfrac{1}{2}x_{\alpha_3}^2x_{\alpha_4}\lambda_{\alpha_2+2\alpha_3+\alpha_4}\\
        &=\lambda_{\alpha_2}+\dfrac{\mu_{\alpha_1+\alpha_2}\mu_{\alpha_2+\alpha_3}}{\mu_{\alpha_1+\alpha_2+\alpha_3}}+\dfrac{\mu_{\alpha_2+2\alpha_3}\mu_{\alpha_1+\alpha_2}^2}{\mu_{\alpha_1+\alpha_2+\alpha_3}^2},\\
        \mu_{\alpha_3+\alpha_4}&=-x_{\alpha_2+\alpha_3}\lambda_{\alpha_2+2\alpha_3+\alpha_4}+\dfrac{1}{2}x_{\alpha_2}x_{\alpha_3}\lambda_{\alpha_2+2\alpha_3+\alpha_4}=\dfrac{\mu_{\alpha_1}\mu_{\alpha_2+2\alpha_3+\alpha_4}}{\mu_{\alpha_1+\alpha_2+\alpha_3}}.
    \end{split}
    \end{equation*}

    We conclude that for the roots $\alpha_2$ and $\alpha_3+\alpha_4$, formula (\ref{FD}) holds.

    \item Next, if the largest root in the support equals $\alpha_1+\alpha_2+\alpha_3+\alpha_4$, then the most interesting supports are those containing the roots \[\alpha_1+\alpha_2+\alpha_3+\alpha_4, \quad \alpha_2+2\alpha_3+2\alpha_4 \quad \text{and} \quad \alpha_2+2\alpha_3+\alpha_4.\] One can easily show that
     $$\mu_{\alpha_2}=\lambda_{\alpha_2}-\dfrac{\mu_{\alpha_1+\alpha_2}^2\mu_{\alpha_2+2\alpha_3+2\alpha_4}}{\mu_{\alpha_1+\alpha_2+\alpha_3+\alpha_4}^2}+\dfrac{\mu_{\alpha_1+\alpha_2}\mu_{\alpha_2+\alpha_3+\alpha_4}}{\mu_{\alpha_1+\alpha_2+\alpha_3+\alpha_4}}-x_{\alpha_3}^2\lambda_{\alpha_2+2\alpha_3}.$$
    First, consider the case $\lambda_{\alpha_2+2\alpha_3}=0$. Then clearly the root $\alpha_2$ satisfies formula (\ref{FD}). Since $\alpha_2+2\alpha_3\notin D$, the support $D$ may contain either $\alpha_3$ or $\alpha_2+\alpha_3$. But
    \begin{equation*}
    \begin{split}
        \mu_{\alpha_3}&=\lambda_{\alpha_3}+\dfrac{\mu_{\alpha_1+\alpha_2+\alpha_3}\mu_{\alpha_3+\alpha_4}+\mu_{\alpha_1}\mu_{\alpha_2+2\alpha_3+\alpha_4}}{\mu_{\alpha_1+\alpha_2+\alpha_3+\alpha_4}}+2\dfrac{\mu_{\alpha_1+\alpha_2+\alpha_3}\mu_{\alpha_1}\mu_{\alpha_2+2\alpha_3+2\alpha_4}}{\mu_{\alpha_1+\alpha_2+\alpha_3+\alpha_4}^2},\\
        \mu_{\alpha_2+\alpha_3}&=\lambda_{\alpha_2+\alpha_3}+\dfrac{-\mu_{\alpha_2+2\alpha_3+\alpha_4}\mu_{\alpha_1+\alpha_2}+\mu_{\alpha_1+\alpha_2+\alpha_3}\mu_{\alpha_2+\alpha_3+\alpha_4}}{\mu_{\alpha_1+\alpha_2+\alpha_3+\alpha_4}}-\\&-2\dfrac{\mu_{\alpha_2+2\alpha_3+2\alpha_4}\mu_{\alpha_1+\alpha_2+\alpha_3}\mu_{\alpha_1+\alpha_2}}{\mu_{\alpha_1+\alpha_2+\alpha_3+\alpha_4}^2}.
    \end{split}
    \end{equation*}
     One can see that formula (\ref{FD}) is satisfied in both situations.
    Second, if  $\lambda_{\alpha_2+2\alpha_3}\neq0$, then formula (\ref{FD}) for $\alpha_2$ follows from the fact that $\lambda_{\alpha_2+2\alpha_3}$ can be obtained from $\mu_{\alpha_2+2\alpha_3}$, while $x_{\alpha_3}$ can be obtained from $\mu_{\alpha_2+\alpha_3}$.

    \item Suppose the largest root in the support equals $\alpha_1+\alpha_2+2\alpha_3$. Here the analysis essentially reduces to the two supports $\{\alpha_1+\alpha_2+2\alpha_3,\alpha_1+\alpha_2+\alpha_3+\alpha_4\}$ and $\{\alpha_1+\alpha_2+2\alpha_3,\alpha_2+2\alpha_3+\alpha_4\}$. (All other cases can be reduced to these two by arguments similar to those above.)
    For the first support, we have the following equations for the coordinates of $\mu$:
    \begin{equation*}
    \begin{split}
        \mu_{\alpha_2+\alpha_3}&=\dfrac{\mu_{\alpha_1+\alpha_2+\alpha_3}\mu_{\alpha_2+2\alpha_3}}{\mu_{\alpha_1+\alpha_2+2\alpha_3}};\\
        \mu_{\alpha_2}&=\dfrac{\mu_{\alpha_1+\alpha_2}\mu_{\alpha_2+2\alpha_3}}{\mu_{\alpha_1+\alpha_2+2\alpha_3}};\\
        \mu_{\alpha_4}&=\dfrac{-\mu_{\alpha_3}\mu_{\alpha_1+\alpha_2+\alpha_3+\alpha_4}-\mu_{\alpha_1+\alpha_2}\mu_{\alpha_3+\alpha_4}}{2\mu_{\alpha_1+\alpha_2+2\alpha_3}}.
    \end{split}
    \end{equation*}
    For the second support, the equations for the coordinates of $\mu$ are:
    \begin{equation*}
    \begin{split}
        \mu_{\alpha_2}&=-\dfrac{\mu_{\alpha_1+\alpha_2+\alpha_3}\mu_{\alpha_2+\alpha_3}}{2\mu_{\alpha_1+\alpha_2+2\alpha_3}}+\dfrac{\mu_{\alpha_1+\alpha_2+\alpha_3}^2\mu_{\alpha_2+2\alpha_3}}{4\mu_{\alpha_1+\alpha_2+2\alpha_3}^2};\\
        \mu_{\alpha_4}&=\dfrac{\mu_{\alpha_1+\alpha_2+\alpha_3}\mu_{\alpha_3+\alpha_4}-2\mu_{\alpha_1}\mu_{\alpha_2+2\alpha_3+\alpha_4}}{2\mu_{\alpha_1+\alpha_2+2\alpha_3}}.
    \end{split}
    \end{equation*}
    \item Suppose the largest root in the support equals $\alpha_1+\alpha_2+2\alpha_3+\alpha_4$. Although there are many such supports, they are all amenable to a simple analysis. Each such support does not contain the roots $\alpha_1+\alpha_2+2\alpha_3$, $\alpha_1+\alpha_2+\alpha_3+\alpha_4$, and $\alpha_1+\alpha_2+\alpha_3$. Consequently, $x_{\alpha_4}$, $x_{\alpha_3}$, and $x_{\alpha_3+\alpha_4}$ can always be expressed as rational function of $\mu_{\alpha_1+\alpha_2+2\alpha_3}$, $\mu_{\alpha_1+\alpha_2+\alpha_3+\alpha_4}$, $\mu_{\alpha_1+\alpha_2+\alpha_3}$ and $\mu_{\alpha_1+\alpha_2+2\alpha_3+\alpha_4}$ (in fact, as polynomials with coefficients from $\overline{\Fp}
    [\mu_{\alpha_1+\alpha_2+2\alpha_3+\alpha_4}^{-1}]$). Moreover, $x_{\alpha_1}$ can be expressed as  a rational function of $\mu_{\alpha_2+2\alpha_3+\alpha_4}$ and $\mu_{\alpha_1+\alpha_2+2\alpha_3+\alpha_4}$. It follows that if $\lambda_{\alpha_1+\alpha_2+2\alpha_3+\alpha_4}$ appears in a monomial in a coordinate $\mu_\alpha$, then that monomial can be expressed as a rational function of the coordinates $\mu_\beta$ with $\beta\succ\alpha$. Furthermore, almost all other monomials can also be expressed as polynomials in $x_{\alpha_4},~x_{\alpha_3},~x_{\alpha_3+\alpha_4}$ and $x_{\alpha_1}$. Finally, those monomials that cannot be expressed in this way satisfy formula (\ref{FD}) by the analysis above. A similar analysis applies to the cases where the largest root of the support is $2\alpha_1+3\alpha_2+4\alpha_3+2\alpha_4$.
\end{itemize}

The remaining steps can be analyzed similarly: either Lemma \ref{zeta'} applies, or the equations for the coordinates can be written out explicitly in a straightforward manner. Thus, Theorem \ref{fun} is proved.
\hfill$\square$

Finally, we formulate and prove the last theorem, which leads to the main result.

\begin{theorem}
    \label{dif}
Let $\lambda,\lambda'\in S$ with $\lambda\neq\lambda'$. Then $\overline{N}\cdot\lambda\neq \overline{N}\cdot\lambda'$.
\end{theorem}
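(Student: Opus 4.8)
The plan is to argue by contraposition: assuming $\overline{N}\cdot\lambda = \overline{N}\cdot\lambda' =: \Omega$, I will deduce $\lambda = \lambda'$. The engine is Theorem~\ref{fun}, which for every root $\gamma$ satisfying condition~(\ref{rk}) writes the coordinate $\mu(e_\gamma)$ of $\mu = \exp(\chi)\cdot\lambda$ as $\lambda(e_\gamma) + F_{D,\gamma}$ evaluated at the higher coordinates. Read as a statement about the orbit, it says that on a dense open subset of $\Omega$ the quantity $\nu(e_\gamma) - F_{D,\gamma}(\nu(e_\beta),\beta\succ\gamma)$ is constant, equal to $\lambda(e_\gamma)$; thus $\lambda(e_\gamma)$ is recovered from $\Omega$ together with the function $F_{D,\gamma}$. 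The whole difficulty is that $F_{D,\gamma}$ a priori depends on $D=\supp(\lambda)$, which is exactly what we are trying to pin down. I would prove $\lambda(e_\gamma)=\lambda'(e_\gamma)$ for every $\gamma\in\Phi^+$ by downward induction on $\gamma$ in the order $\succ$, starting from the highest root and ending at $\alpha_4$.

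For the inductive step I fix $\gamma$ and assume $\lambda(e_\beta)=\lambda'(e_\beta)$ for all $\beta\succ\gamma$; equivalently $D\cap\{\beta\succ\gamma\} = D'\cap\{\beta\succ\gamma\}$, where $D'=\supp(\lambda')$. The key is a locality principle: both whether condition~(\ref{rk}) holds at $\gamma$ and the rational function $F_{D,\gamma}$ depend only on the truncated support $D\cap\{\beta\succ\gamma\}$, not on $D$ as a whole. For~(\ref{rk}) this holds because every entry $\lambda([e_\alpha,e_\beta])$ of $A_{\lambda,\gamma}$ (with $\alpha\succeq\gamma$) is a structure constant times $\lambda(e_{\alpha+\beta})$, and $\alpha+\beta > \alpha\geq\gamma$ forces $\alpha+\beta\succ\gamma$, so the matrices $A_{\lambda,\gamma}$ and $B_{\lambda,\gamma}$ see only the coordinates $\lambda(e_\delta)$ with $\delta\succ\gamma$ (in particular never $\lambda(e_\gamma)$ itself). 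For $F_{D,\gamma}$ it follows from its construction via Lemma~\ref{zeta'}: it is assembled from the polynomials $T_{\gamma,\alpha}(x)$ with $\alpha\in D$, $\alpha\succ\gamma$, re-expressed through the set $\zeta'_{D,\gamma}$ and the functions $F_{D,\alpha}$, $\alpha\succ\gamma$, all of which by induction involve only coordinates indexed by roots $\succ\gamma$. Consequently (\ref{rk}) holds at $\gamma$ for $\lambda$ if and only if it holds for $\lambda'$, and in that case $F_{D,\gamma}$ and $F_{D',\gamma}$ are literally the same rational function.

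I would then split into two cases. If (\ref{rk}) fails at $\gamma$, then since $\lambda,\lambda'\in S$ force (\ref{rk}) to hold on their supports, we get $\gamma\notin D$ and $\gamma\notin D'$, whence $\lambda(e_\gamma)=0=\lambda'(e_\gamma)$. If (\ref{rk}) holds at $\gamma$, Theorem~\ref{fun} applies to both forms and gives, on dense open subsets of the irreducible variety $\Omega$, the relations $\nu(e_\gamma) = \lambda(e_\gamma) + F_{D,\gamma}(\nu(e_\beta),\beta\succ\gamma)$ and $\nu(e_\gamma) = \lambda'(e_\gamma) + F_{D',\gamma}(\nu(e_\beta),\beta\succ\gamma)$. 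Since $\Omega$ is the orbit of the connected group $\overline{N}$ it is irreducible, so the two dense open loci meet; there $F_{D,\gamma}=F_{D',\gamma}$, and subtracting yields $\lambda(e_\gamma)=\lambda'(e_\gamma)$. This closes the induction, forces $\lambda=\lambda'$, and contradicts $\lambda\neq\lambda'$, so $\overline{N}\cdot\lambda\neq\overline{N}\cdot\lambda'$.

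The hard part will be the locality principle for $F_{D,\gamma}$, i.e. making rigorous that it depends only on $D\cap\{\beta\succ\gamma\}$ and is insensitive to whether $\gamma\in D$; this requires tracing the recursive construction behind Theorem~\ref{fun} and checking that no data indexed by roots $\preceq\gamma$ ever enters, the inequality $\alpha+\beta\succ\gamma$ for $\alpha\succeq\gamma$ being the decisive point for both the matrices and the $T_{\gamma,\alpha}(x)$. A secondary technical point is the passage from the polynomial identity~(\ref{FD}) in the variables $x_\alpha$ to a genuine identity at an actual point $\nu\in\Omega$: one restricts to the complement of the proper closed polar loci of $F_{D,\gamma}$ and $F_{D',\gamma}$ and invokes irreducibility of $\Omega$ so that these dense opens intersect. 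Everything else is a clean finite induction.
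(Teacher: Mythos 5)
Your proof is correct and takes essentially the same route as the paper: the paper likewise considers the largest root $\gamma$ with $\lambda(e_\gamma)\neq\lambda'(e_\gamma)$ (the contrapositive form of your downward induction), uses exactly your locality observation to arrange $F_{D,\gamma}=F_{D',\gamma}$ whenever $\{\beta\in D\mid\beta\succ\gamma\}=\{\beta\in D'\mid\beta\succ\gamma\}$, and subtracts the two instances of formula (\ref{FD}) at a common point of the orbit to conclude $\lambda(e_\gamma)=\lambda'(e_\gamma)$. The only cosmetic difference is that you justify the evaluation step via dense open loci and irreducibility of the orbit, whereas the paper evaluates at an arbitrary orbit point and notes the rational function stays finite there; both settle the same technical point.
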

\begin{proof}
    Assume the contrary. Let $\lambda \neq \lambda'$ but $\overline{N}\cdot\lambda = \overline{N}\cdot\lambda'$. Let $\gamma$ be the largest positive root such that $\lambda(e_\gamma) \neq \lambda'(e_\gamma)$. Then $\gamma$ belongs to the support of at least one of the linear forms $\lambda$ or $\lambda'$. Since both forms lie in $S$, the root $\gamma$ satisfies condition (\ref{rk}). Set $\mu = \exp(\chi)\cdot\lambda,~\mu'=\exp(\chi)\cdot\lambda'$. By Theorem \ref{fun}, there exist rational functions $F_{D,\gamma}$ and $F_{D',\gamma}$ such that $$\mu(e_\gamma) = \lambda(e_\gamma) + F_{D,\gamma}(\mu(e_\beta), \beta \succ \gamma)$$ and $$\mu'(e_\gamma) = \lambda'(e_\gamma) + F_{D',\gamma}(\mu'(e_\beta), \beta \succ \gamma).$$
Note that if $\lambda(e_{\beta})=\lambda'(e_{\beta})$ for all $\beta\succ\gamma$, then $\mu(e_\gamma)-\lambda(e_\gamma) = \mu'(e_\gamma)-\lambda'(e_\gamma)$ and $\mu(e_{\beta})=\mu'(e_{\beta})$ for all $\beta\succ\gamma$ . Therefore, we may choose our functions $F_{D,\gamma}$ for different $D$ and $\gamma$ in such a way that $F_{D,\gamma}=F_{D',\gamma}$ whenever $\{\beta\in D\mid\beta\succ\gamma\}=\{\beta\in D'\mid\beta\succ\gamma\}$.
Let $\wt\mu = \exp(\wt{x})\cdot\lambda$ be an arbitrary element of their common orbit, where $\wt{x} = \sum_{\alpha\in\Phi^+}\wt{x}_\alpha e_\alpha \in \overline\nt$. Then $$\wt\mu(e_\gamma) = \lambda(e_\gamma) + F_{D,\gamma}(\wt\mu(e_\beta), \beta \succ \gamma)$$ and $$\wt\mu(e_\gamma) = \lambda'(e_\gamma) + F_{D,\gamma}(\wt\mu(e_\beta), \beta \succ \gamma).$$
Note that $F_{D,\gamma}(\wt\mu(e_\beta), \beta \succ \gamma)$ does not become infinite because $\wt\mu(e_\gamma)$, $\lambda(e_\gamma)$, and $\lambda'(e_\gamma)$ are well-defined numbers. Hence, $\lambda(e_\gamma) = \lambda'(e_\gamma)$. This is a contradiction.
\end{proof}

Thus, distinct forms in $S$ have distinct orbits. Note that $N\cdot\lambda \subseteq\overline{N}\cdot\lambda$, so if $\lambda,\lambda'\in S$ and $\lambda\neq\lambda'$, then $N\cdot\lambda\neq N\cdot\lambda'$.

It remains to prove that all coadjoint orbits are obtained in this way. Consider the orbits over the finite field $\Fp_q$. One can easily compute $|S|$. Let $D$ be a support from the list, with $|D|=k$. There are $(q-1)^k$ linear forms with this support. If they all lie in $S$, then this support contributes $(q-1)^k$ forms to $S$. If there is one restriction on the scalars, then the support yields $(q-1)^{k-1}$ forms in $S$, because the restriction expresses one coordinate in terms of the others. For example, $$\lambda_{\alpha_1+\alpha_2}=\dfrac{\lambda_{\alpha_1+\alpha_2+2\alpha_3}\lambda_{\alpha_2+\alpha_3+\alpha_4}^2}{\lambda_{\alpha_2+2\alpha_3+\alpha_4}^2}$$ for the supports 30, 31, 32, see page \pageref{tablesupp}. Thus, a straightforward calculation gives
\begin{equation*}
    \begin{split}
        |S|&=1+24(q-1)+140(q-1)^2+288(q-1)^3+256(q-1)^4\\
        &+124(q-1)^5+40(q-1)^6+9(q-1)^7+(q-1)^8.
    \end{split}
\end{equation*}
The sum of the coefficients is exactly 883, which is the number of supports in the list plus three supports with restrictions.

Since each form in $S$ determines its own orbit, the set $S$ accounts for $|S|$ orbits. However, in the work \cite{GoodwinMoschRohle16}, the number of coadjoint orbits for $F_4$ over a finite field $\Fp_q$ is computed by group-theoretic methods, and it coincides exactly with $|S|$. Hence, over $\Fp_q$ there are no other orbits. This completes the proof of the main result.

The same approach can be applied to other root systems $\Phi$. Fix a total lexicographic order $\succ$ on the positive roots $\Phi^+$. For $\lambda\in\nt^*$ and $\gamma\in\Phi^+$, define the matrices
\begin{equation*}
\begin{split}
    A_{\lambda,\gamma}:=\left(\lambda([e_\alpha, e_\beta])\right)_{\alpha\in\Phi^+, \alpha\succeq\gamma, \beta\in\Phi^+}\cdot\\
    B_{\lambda,\gamma}:=\left(\lambda([e_\alpha, e_\beta])\right)_{\alpha\in\Phi^+, \alpha\succ\gamma, \beta\in\Phi^+}.
\end{split}
\end{equation*}
Let $S$ be the set of $\lambda\in\nt^*$ satisfying the following condition: every $\gamma\in \supp(\lambda)$ fulfills condition (\ref{rk}). We can now state a conjecture.

\begin{conjecture}
    \label{hyp}
For every coadjoint orbit\textup{,} there exists a unique linear form $\lambda\in S$ lying on that orbit.
\end{conjecture}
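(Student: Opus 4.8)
The plan is to reproduce, for an arbitrary root system $\Phi$, the three-part structure by which Theorem \ref{list} was established for $F_4$: a rationality result in the spirit of Theorem \ref{fun}, the separation of orbits in the spirit of Theorem \ref{dif}, and an existence argument guaranteeing that every orbit meets $S$. The first observation is that the uniqueness half is essentially free. The proof of Theorem \ref{dif} uses nothing specific to $F_4$: it invokes only Theorem \ref{fun}, the total order $\succ$, and the elementary fact that a rational function cannot take two values at a point where it is finite. Hence, once the analogue of Theorem \ref{fun} is available for $\Phi$, distinct forms of $S$ lie on distinct orbits verbatim, and the inclusion $N\cdot\lambda\subseteq\overline{N}\cdot\lambda$ transfers this to the finite field.

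The heart of the matter is therefore the generalization of Theorem \ref{fun}, asserting that on the orbit each coordinate $\mu(e_\gamma)$ at a root $\gamma$ satisfying condition (\ref{rk}) is, up to the additive constant $\lambda(e_\gamma)$, a rational function of the higher coordinates $\{\mu(e_\beta)\}_{\beta\succ\gamma}$. Two ingredients already carry over: Lemma \ref{uno} for single-root supports rests on a dimension count over $\overline{\Fp}$ valid for any $\Phi$, and the inductive reduction of Lemma \ref{zeta'} is type-independent. What does not carry over is the case-by-case verification of the ``nontrivial'' supports, where distinct polynomials $T_{\gamma,\alpha}(x)$ share intermediate roots and the naive elimination of variables fails. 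I would replace these explicit computations by a uniform version of the dimension argument underlying Lemma \ref{uno}: arguing by downward induction on $\gamma$, suppose $\mu(e_\gamma)$ is not a rational function of the higher coordinates; then, after using condition (\ref{rk}) at the larger support roots to eliminate in decreasing order the variables attached to singular roots, one would exhibit a surjection of $\overline{N}\cdot\lambda$ onto an affine space of dimension strictly exceeding $\dim\overline{N}\cdot\lambda = \rk\big(\lambda([e_\alpha,e_\beta])\big)_{\alpha,\beta}$, a contradiction. The role of condition (\ref{rk}), imposed at every support root, is precisely to make this dimension bookkeeping balance.

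The main obstacle lies exactly here. In the single-root case the variables to be eliminated come in the clean pairs $\beta_i+\beta_{2k-i+1}=\delta$ and the orbit has dimension exactly $|S_\delta|$, so the elimination terminates transparently. For a general support the relevant variables are organized into the chains $\delta_1,\dots,\delta_k,\varepsilon_1,\dots,\varepsilon_k$ appearing in Proposition \ref{nec}, and one must show that condition (\ref{rk}) still forces the elimination to close up without leaving a genuine dependence on a free variable. A second, more technical obstacle is characteristic: the descent from the infinitesimal statement $d\mu(e_\gamma)\in\mathrm{span}\{d\mu(e_\beta)\}_{\beta\succ\gamma}$, which condition (\ref{rk}) encodes at the point $\lambda$, to global rationality requires separability, so one must quantify ``sufficiently large $p$'' in terms of the heights of the roots of $\Phi$, which bound the degrees of the polynomials $T_{\gamma,\alpha}(x)$.

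Finally, existence must be handled without the crutch used for $F_4$, where $|S|$ was matched against the orbit count of \cite{GoodwinMoschRohle16}; no such count is available in general, this being the territory of Higman's conjecture. Instead I would argue constructively. Given an arbitrary $\mu_0$ in an orbit, process the roots in decreasing $\succ$-order, maintaining the invariant that every support root already visited satisfies condition (\ref{rk}) and is frozen. If condition (\ref{rk}) fails at the current largest unvisited support root $\gamma$, then $\mu(e_\gamma)$ is a free coordinate relative to $\{\mu(e_\beta)\}_{\beta\succ\gamma}$, and one uses the group action to move within the orbit to a point with $\mu(e_\gamma)=0$ while fixing all $\mu(e_\delta)$ with $\delta\succ\gamma$; this removes $\gamma$ from the support and can only affect strictly smaller coordinates, so the procedure terminates at a representative in $S$. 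The enabling lemma --- that the kill-step is realizable by $\overline{N}$, and by $N$ over $\Fp_q$ for $q$ large --- is again a solvability statement of the same nature as Theorem \ref{fun}, so in the end the whole conjecture rests on the uniform rationality result of the second step.
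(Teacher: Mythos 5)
This statement is left as an open conjecture in the paper --- there is no proof of it there to compare against --- and your proposal does not close it either: it is a research program, not a proof. You correctly identify the architecture that worked for $F_4$ (rationality as in Theorem~\ref{fun}, then separation as in Theorem~\ref{dif}, then existence), and you are right that Theorem~\ref{dif} is type-independent once the rationality statement is available. But the two load-bearing steps are asserted rather than established. First, the generalization of Theorem~\ref{fun}: you propose to replace the paper's massive case-by-case verification (Lemma~\ref{uno}, Lemma~\ref{zeta'}, plus explicit coordinate computations for the exceptional supports) by a ``uniform dimension argument,'' but you never construct the claimed surjection of $\overline{N}\cdot\lambda$ onto an affine space of dimension exceeding $\rk\left(\lambda([e_\alpha,e_\beta])\right)_{\alpha,\beta}$. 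In the single-root case this works precisely because the singular roots pair off as $\beta_i+\beta_{2k-i+1}=\delta$ and the elimination of variables visibly terminates; for a general support the variables are entangled in the chains of Proposition~\ref{nec}, and --- as you yourself concede --- showing that condition~(\ref{rk}) forces the elimination to ``close up'' is exactly the open problem. Naming the obstacle is not the same as overcoming it.

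Second, your existence step rests on an unproven ``kill-step'' lemma: that failure of condition~(\ref{rk}) at the largest bad support root $\gamma$ lets one move within the orbit to a point with $\mu(e_\gamma)=0$ while \emph{exactly} fixing every coordinate $\mu(e_\beta)$, $\beta\succ\gamma$. Failure of~(\ref{rk}) gives this only infinitesimally (a tangent vector in the common kernel of the higher rows with nonzero pairing against row $\gamma$); integrating it to a group element that fixes the higher coordinates on the nose, not just to first order, is a genuine solvability statement, and you defer it back to the very rationality result of your second step. So the whole proposal is circular at its base: both halves reduce to the uniform analogue of Theorem~\ref{fun}, which is precisely what is missing. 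Two smaller points are worth keeping, though: your observation that a constructive existence argument would remove the dependence on the external orbit count of \cite{GoodwinMoschRohle16} (which does not exist for general $\Phi$, this being Higman-conjecture territory) correctly diagnoses why the paper's $F_4$ proof does not generalize as written; and the separability caveat about quantifying ``sufficiently large $p$'' via the degrees of the polynomials $T_{\gamma,\alpha}(x)$ is a real issue any eventual proof must address.
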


\begin{center}\large{\textbf{Appendix}}\end{center}
\begin{multicols}{3}
\begin{enumerate}\scriptsize
\item $\emptyset$
\item $\alpha_4$
\item $\alpha_3$
\item $\alpha_3,~\alpha_4$
\item $\alpha_3+\alpha_4$
\item $\alpha_2$
\item $\alpha_2,~\alpha_3+\alpha_4$
\item $\alpha_2,~\alpha_3$
\item $\alpha_2,~\alpha_3,~\alpha_4$
\item $\alpha_2,~\alpha_4$
\item $\alpha_2+\alpha_3$
\item $\alpha_2+\alpha_3,~\alpha_3+\alpha_4$
\item $\alpha_2+\alpha_3,~\alpha_3+\alpha_4,~\alpha_4$
\item $\alpha_2+\alpha_3,~\alpha_4$
\item $\alpha_2+\alpha_3+\alpha_4$
\item $\alpha_2+\alpha_3+\alpha_4,~\alpha_3$
\item $\alpha_2+2\alpha_3$
\item $\alpha_2+2\alpha_3,~\alpha_2+\alpha_3+\alpha_4$
\item $\alpha_2+2\alpha_3,~\alpha_2+\alpha_3+\alpha_4,~\alpha_4$
\item $\alpha_2+2\alpha_3,~\alpha_2$
\item $\alpha_2+2\alpha_3,~\alpha_2,~\alpha_3+\alpha_4$
\item $\alpha_2+2\alpha_3,~\alpha_2,~\alpha_3+\alpha_4,~\alpha_4$
\item $\alpha_2+2\alpha_3,~\alpha_2,~\alpha_4$
\item $\alpha_2+2\alpha_3,~\alpha_3+\alpha_4$
\item $\alpha_2+2\alpha_3,~\alpha_3+\alpha_4,~\alpha_4$
\item $\alpha_2+2\alpha_3,~\alpha_4$
\item $\alpha_2+2\alpha_3+\alpha_4$
\item $\alpha_2+2\alpha_3+\alpha_4,~\alpha_2$
\item $\alpha_2+2\alpha_3+2\alpha_4$
\item $\alpha_2+2\alpha_3+2\alpha_4,~\alpha_2+2\alpha_3$
\item $\alpha_2+2\alpha_3+2\alpha_4,~\alpha_2+2\alpha_3,~\alpha_2$
\item $\alpha_2+2\alpha_3+2\alpha_4,~\alpha_2+\alpha_3$
\item $\alpha_2+2\alpha_3+2\alpha_4,~\alpha_2$
\item $\alpha_2+2\alpha_3+2\alpha_4,~\alpha_2,~\alpha_3$
\item $\alpha_2+2\alpha_3+2\alpha_4,~\alpha_3$
\item $\alpha_1$
\item $\alpha_1,~\alpha_2+2\alpha_3+2\alpha_4$
\item $\alpha_1,~\alpha_2+2\alpha_3+2\alpha_4,~\alpha_2+2\alpha_3$
\item $\alpha_1,~\alpha_2+2\alpha_3+2\alpha_4,~\alpha_2+2\alpha_3,~\alpha_2$
\item $\alpha_1,~\alpha_2+2\alpha_3+2\alpha_4,~\alpha_2+\alpha_3$
\item $\alpha_1,~\alpha_2+2\alpha_3+2\alpha_4,~\alpha_2$
\item $\alpha_1,~\alpha_2+2\alpha_3+2\alpha_4,~\alpha_2,~\alpha_3$
\item $\alpha_1,~\alpha_2+2\alpha_3+2\alpha_4,~\alpha_3$
\item $\alpha_1,~\alpha_2+2\alpha_3+\alpha_4$
\item $\alpha_1,~\alpha_2+2\alpha_3+\alpha_4,~\alpha_2$
\item $\alpha_1,~\alpha_2+2\alpha_3$
\item $\alpha_1,~\alpha_2+2\alpha_3,~\alpha_2+\alpha_3+\alpha_4$
\item $\alpha_1,~\alpha_2+2\alpha_3,~\alpha_2+\alpha_3+\alpha_4,~\alpha_4$
\item $\alpha_1,~\alpha_2+2\alpha_3,~\alpha_2$
\item $\alpha_1,~\alpha_2+2\alpha_3,~\alpha_2,~\alpha_3+\alpha_4$
\item $\alpha_1,~\alpha_2+2\alpha_3,~\alpha_2,~\alpha_3+\alpha_4,~\alpha_4$
\item $\alpha_1,~\alpha_2+2\alpha_3,~\alpha_2,~\alpha_4$
\item $\alpha_1,~\alpha_2+2\alpha_3,~\alpha_3+\alpha_4$
\item $\alpha_1,~\alpha_2+2\alpha_3,~\alpha_3+\alpha_4,~\alpha_4$
\item $\alpha_1,~\alpha_2+2\alpha_3,~\alpha_4$
\item $\alpha_1,~\alpha_2+\alpha_3+\alpha_4$
\item $\alpha_1,~\alpha_2+\alpha_3+\alpha_4,~\alpha_3$
\item $\alpha_1,~\alpha_2+\alpha_3$
\item $\alpha_1,~\alpha_2+\alpha_3,~\alpha_3+\alpha_4$
\item $\alpha_1,~\alpha_2+\alpha_3,~\alpha_3+\alpha_4,~\alpha_4$
\item $\alpha_1,~\alpha_2+\alpha_3,~\alpha_4$
\item $\alpha_1,~\alpha_2$
\item $\alpha_1,~\alpha_2,~\alpha_3+\alpha_4$
\item $\alpha_1,~\alpha_2,~\alpha_3$
\item $\alpha_1,~\alpha_2,~\alpha_3,~\alpha_4$
\item $\alpha_1,~\alpha_2,~\alpha_4$
\item $\alpha_1,~\alpha_3+\alpha_4$
\item $\alpha_1,~\alpha_3$
\item $\alpha_1,~\alpha_3,~\alpha_4$
\item $\alpha_1,~\alpha_4$
\item $\alpha_1+\alpha_2$
\item $\alpha_1+\alpha_2,~\alpha_2+2\alpha_3+2\alpha_4$
\item $\alpha_1+\alpha_2,~\alpha_2+2\alpha_3+2\alpha_4,~\alpha_2+2\alpha_3$
\item $\alpha_1+\alpha_2,~\alpha_2+2\alpha_3+2\alpha_4,~\alpha_2+\alpha_3$
\item $\alpha_1+\alpha_2,~\alpha_2+2\alpha_3+2\alpha_4,~\alpha_2+\alpha_3,~\alpha_3$
\item $\alpha_1+\alpha_2,~\alpha_2+2\alpha_3+2\alpha_4,~\alpha_3$
\item $\alpha_1+\alpha_2,~\alpha_2+2\alpha_3+\alpha_4$
\item $\alpha_1+\alpha_2,~\alpha_2+2\alpha_3$
\item $\alpha_1+\alpha_2,~\alpha_2+2\alpha_3,~\alpha_2+\alpha_3+\alpha_4$
\item $\alpha_1+\alpha_2,~\alpha_2+2\alpha_3,~\alpha_2+\alpha_3+\alpha_4,~\alpha_3+\alpha_4$
\item $\alpha_1+\alpha_2,~\alpha_2+2\alpha_3,~\alpha_2+\alpha_3+\alpha_4,~\alpha_3+\alpha_4,~\alpha_4$
\item $\alpha_1+\alpha_2,~\alpha_2+2\alpha_3,~\alpha_2+\alpha_3+\alpha_4,~\alpha_4$
\item $\alpha_1+\alpha_2,~\alpha_2+2\alpha_3,~\alpha_3+\alpha_4$
\item $\alpha_1+\alpha_2,~\alpha_2+2\alpha_3,~\alpha_3+\alpha_4,~\alpha_4$
\item $\alpha_1+\alpha_2,~\alpha_2+2\alpha_3,~\alpha_4$
\item $\alpha_1+\alpha_2,~\alpha_2+\alpha_3+\alpha_4$
\item $\alpha_1+\alpha_2,~\alpha_2+\alpha_3+\alpha_4,~\alpha_3+\alpha_4$
\item $\alpha_1+\alpha_2,~\alpha_2+\alpha_3+\alpha_4,~\alpha_3+\alpha_4,~\alpha_3$
\item $\alpha_1+\alpha_2,~\alpha_2+\alpha_3+\alpha_4,~\alpha_3$
\item $\alpha_1+\alpha_2,~\alpha_2+\alpha_3$
\item $\alpha_1+\alpha_2,~\alpha_2+\alpha_3,~\alpha_3+\alpha_4$
\item $\alpha_1+\alpha_2,~\alpha_2+\alpha_3,~\alpha_3$
\item $\alpha_1+\alpha_2,~\alpha_2+\alpha_3,~\alpha_3,~\alpha_4$
\item $\alpha_1+\alpha_2,~\alpha_2+\alpha_3,~\alpha_4$
\item $\alpha_1+\alpha_2,~\alpha_3+\alpha_4$
\item $\alpha_1+\alpha_2,~\alpha_3$
\item $\alpha_1+\alpha_2,~\alpha_3,~\alpha_4$
\item $\alpha_1+\alpha_2,~\alpha_4$
\item $\alpha_1+\alpha_2+\alpha_3$
\item $\alpha_1+\alpha_2+\alpha_3,~\alpha_2+2\alpha_3+2\alpha_4$
\item $\alpha_1+\alpha_2+\alpha_3,~\alpha_2+2\alpha_3+2\alpha_4,~\alpha_2+2\alpha_3$
\item $\alpha_1+\alpha_2+\alpha_3,~\alpha_2+2\alpha_3+2\alpha_4,~\alpha_2+2\alpha_3,~\alpha_2$
\item $\alpha_1+\alpha_2+\alpha_3,~\alpha_2+2\alpha_3+2\alpha_4,~\alpha_2$
\item $\alpha_1+\alpha_2+\alpha_3,~\alpha_2+2\alpha_3+\alpha_4$
\item $\alpha_1+\alpha_2+\alpha_3,~\alpha_2+2\alpha_3+\alpha_4,~\alpha_2+\alpha_3+\alpha_4$
\item $\alpha_1+\alpha_2+\alpha_3,~\alpha_2+2\alpha_3+\alpha_4,~\alpha_2$
\item $\alpha_1+\alpha_2+\alpha_3,~\alpha_2+2\alpha_3+\alpha_4,~\alpha_2,~\alpha_3+\alpha_4$
\item $\alpha_1+\alpha_2+\alpha_3,~\alpha_2+2\alpha_3+\alpha_4,~\alpha_3+\alpha_4$
\item $\alpha_1+\alpha_2+\alpha_3,~\alpha_2+2\alpha_3$
\item $\alpha_1+\alpha_2+\alpha_3,~\alpha_2+2\alpha_3,~\alpha_2+\alpha_3+\alpha_4$
\item $\alpha_1+\alpha_2+\alpha_3,~\alpha_2+2\alpha_3,~\alpha_2+\alpha_3+\alpha_4,~\alpha_4$
\item $\alpha_1+\alpha_2+\alpha_3,~\alpha_2+2\alpha_3,~\alpha_2$
\item $\alpha_1+\alpha_2+\alpha_3,~\alpha_2+2\alpha_3,~\alpha_2,~\alpha_3+\alpha_4$
\item $\alpha_1+\alpha_2+\alpha_3,~\alpha_2+2\alpha_3,~\alpha_2,~\alpha_3+\alpha_4,~\alpha_4$
\item $\alpha_1+\alpha_2+\alpha_3,~\alpha_2+2\alpha_3,~\alpha_2,~\alpha_4$
\item $\alpha_1+\alpha_2+\alpha_3,~\alpha_2+2\alpha_3,~\alpha_3+\alpha_4$
\item $\alpha_1+\alpha_2+\alpha_3,~\alpha_2+2\alpha_3,~\alpha_3+\alpha_4,~\alpha_4$
\item $\alpha_1+\alpha_2+\alpha_3,~\alpha_2+2\alpha_3,~\alpha_4$
\item $\alpha_1+\alpha_2+\alpha_3,~\alpha_2+\alpha_3+\alpha_4$
\item $\alpha_1+\alpha_2+\alpha_3,~\alpha_2+\alpha_3+\alpha_4,~\alpha_4$
\item $\alpha_1+\alpha_2+\alpha_3,~\alpha_2$
\item $\alpha_1+\alpha_2+\alpha_3,~\alpha_2,~\alpha_3+\alpha_4$
\item $\alpha_1+\alpha_2+\alpha_3,~\alpha_2,~\alpha_3+\alpha_4,~\alpha_4$
\item $\alpha_1+\alpha_2+\alpha_3,~\alpha_2,~\alpha_4$
\item $\alpha_1+\alpha_2+\alpha_3,~\alpha_3+\alpha_4$
\item $\alpha_1+\alpha_2+\alpha_3,~\alpha_3+\alpha_4,~\alpha_4$
\item $\alpha_1+\alpha_2+\alpha_3,~\alpha_4$
\item $\alpha_1+\alpha_2+\alpha_3+\alpha_4$
\item $\alpha_1+\alpha_2+\alpha_3+\alpha_4,~\alpha_2+2\alpha_3+2\alpha_4$
\item $\alpha_1+\alpha_2+\alpha_3+\alpha_4,~\alpha_2+2\alpha_3+2\alpha_4,~\alpha_2+2\alpha_3+\alpha_4$
\item $\alpha_1+\alpha_2+\alpha_3+\alpha_4,~\alpha_2+2\alpha_3+2\alpha_4,~\alpha_2+2\alpha_3+\alpha_4,~\alpha_2+2\alpha_3$
\item $\alpha_1+\alpha_2+\alpha_3+\alpha_4,~\alpha_2+2\alpha_3+2\alpha_4,~\alpha_2+2\alpha_3+\alpha_4,~\alpha_2+2\alpha_3,~\alpha_2$
\item $\alpha_1+\alpha_2+\alpha_3+\alpha_4,~\alpha_2+2\alpha_3+2\alpha_4,~\alpha_2+2\alpha_3+\alpha_4,~\alpha_2+\alpha_3$
\item $\alpha_1+\alpha_2+\alpha_3+\alpha_4,~\alpha_2+2\alpha_3+2\alpha_4,~\alpha_2+2\alpha_3+\alpha_4,~\alpha_2$
\item $\alpha_1+\alpha_2+\alpha_3+\alpha_4,~\alpha_2+2\alpha_3+2\alpha_4,~\alpha_2+2\alpha_3+\alpha_4,~\alpha_2,~\alpha_3$
\item $\alpha_1+\alpha_2+\alpha_3+\alpha_4,~\alpha_2+2\alpha_3+2\alpha_4,~\alpha_2+2\alpha_3+\alpha_4,~\alpha_3$
\item $\alpha_1+\alpha_2+\alpha_3+\alpha_4,~\alpha_2+2\alpha_3+2\alpha_4,~\alpha_2+2\alpha_3$
\item $\alpha_1+\alpha_2+\alpha_3+\alpha_4,~\alpha_2+2\alpha_3+2\alpha_4,~\alpha_2+2\alpha_3,~\alpha_2$
\item $\alpha_1+\alpha_2+\alpha_3+\alpha_4,~\alpha_2+2\alpha_3+2\alpha_4,~\alpha_2+\alpha_3$
\item $\alpha_1+\alpha_2+\alpha_3+\alpha_4,~\alpha_2+2\alpha_3+2\alpha_4,~\alpha_2$
\item $\alpha_1+\alpha_2+\alpha_3+\alpha_4,~\alpha_2+2\alpha_3+2\alpha_4,~\alpha_2,~\alpha_3$
\item $\alpha_1+\alpha_2+\alpha_3+\alpha_4,~\alpha_2+2\alpha_3+2\alpha_4,~\alpha_3$
\item $\alpha_1+\alpha_2+\alpha_3+\alpha_4,~\alpha_2+2\alpha_3+\alpha_4$
\item $\alpha_1+\alpha_2+\alpha_3+\alpha_4,~\alpha_2+2\alpha_3+\alpha_4,~\alpha_2+2\alpha_3$
\item $\alpha_1+\alpha_2+\alpha_3+\alpha_4,~\alpha_2+2\alpha_3+\alpha_4,~\alpha_2+2\alpha_3,~\alpha_2$
\item $\alpha_1+\alpha_2+\alpha_3+\alpha_4,~\alpha_2+2\alpha_3+\alpha_4,~\alpha_2+\alpha_3$
\item $\alpha_1+\alpha_2+\alpha_3+\alpha_4,~\alpha_2+2\alpha_3+\alpha_4,~\alpha_2$
\item $\alpha_1+\alpha_2+\alpha_3+\alpha_4,~\alpha_2+2\alpha_3+\alpha_4,~\alpha_2,~\alpha_3$
\item $\alpha_1+\alpha_2+\alpha_3+\alpha_4,~\alpha_2+2\alpha_3+\alpha_4,~\alpha_3$
\item $\alpha_1+\alpha_2+\alpha_3+\alpha_4,~\alpha_2+2\alpha_3$
\item $\alpha_1+\alpha_2+\alpha_3+\alpha_4,~\alpha_2+2\alpha_3,~\alpha_2$
\item $\alpha_1+\alpha_2+\alpha_3+\alpha_4,~\alpha_2+\alpha_3$
\item $\alpha_1+\alpha_2+\alpha_3+\alpha_4,~\alpha_2$
\item $\alpha_1+\alpha_2+\alpha_3+\alpha_4,~\alpha_2,~\alpha_3$
\item $\alpha_1+\alpha_2+\alpha_3+\alpha_4,~\alpha_3$
\item $\alpha_1+\alpha_2+2\alpha_3$
\item $\alpha_1+\alpha_2+2\alpha_3,~\alpha_1+\alpha_2+\alpha_3+\alpha_4$
\item $\alpha_1+\alpha_2+2\alpha_3,~\alpha_1+\alpha_2+\alpha_3+\alpha_4,~\alpha_2+2\alpha_3+2\alpha_4$
\item $\alpha_1+\alpha_2+2\alpha_3,~\alpha_1+\alpha_2+\alpha_3+\alpha_4,~\alpha_2+2\alpha_3+2\alpha_4,~\alpha_2+\alpha_3+\alpha_4$
\item $\alpha_1+\alpha_2+2\alpha_3,~\alpha_1+\alpha_2+\alpha_3+\alpha_4,~\alpha_2+2\alpha_3+2\alpha_4,~\alpha_2+\alpha_3+\alpha_4,~\alpha_2+\alpha_3$
\item $\alpha_1+\alpha_2+2\alpha_3,~\alpha_1+\alpha_2+\alpha_3+\alpha_4,~\alpha_2+2\alpha_3+2\alpha_4,~\alpha_2+\alpha_3+\alpha_4,~\alpha_2+\alpha_3,~\alpha_2$
\item $\alpha_1+\alpha_2+2\alpha_3,~\alpha_1+\alpha_2+\alpha_3+\alpha_4,~\alpha_2+2\alpha_3+2\alpha_4,~\alpha_2+\alpha_3+\alpha_4,~\alpha_2$
\item $\alpha_1+\alpha_2+2\alpha_3,~\alpha_1+\alpha_2+\alpha_3+\alpha_4,~\alpha_2+2\alpha_3+2\alpha_4,~\alpha_2+\alpha_3$
\item $\alpha_1+\alpha_2+2\alpha_3,~\alpha_1+\alpha_2+\alpha_3+\alpha_4,~\alpha_2+2\alpha_3+2\alpha_4,~\alpha_2+\alpha_3,~\alpha_2$
\item $\alpha_1+\alpha_2+2\alpha_3,~\alpha_1+\alpha_2+\alpha_3+\alpha_4,~\alpha_2+2\alpha_3+2\alpha_4,~\alpha_2$
\item $\alpha_1+\alpha_2+2\alpha_3,~\alpha_1+\alpha_2+\alpha_3+\alpha_4,~\alpha_2+2\alpha_3+\alpha_4$
\item $\alpha_1+\alpha_2+2\alpha_3,~\alpha_1+\alpha_2+\alpha_3+\alpha_4,~\alpha_2+2\alpha_3+\alpha_4,~\alpha_2+\alpha_3$
\item $\alpha_1+\alpha_2+2\alpha_3,~\alpha_1+\alpha_2+\alpha_3+\alpha_4,~\alpha_2+2\alpha_3+\alpha_4,~\alpha_2+\alpha_3,~\alpha_2$
\item $\alpha_1+\alpha_2+2\alpha_3,~\alpha_1+\alpha_2+\alpha_3+\alpha_4,~\alpha_2+2\alpha_3+\alpha_4,~\alpha_2$
\item $\alpha_1+\alpha_2+2\alpha_3,~\alpha_1+\alpha_2+\alpha_3+\alpha_4,~\alpha_2+\alpha_3+\alpha_4$
\item $\alpha_1+\alpha_2+2\alpha_3,~\alpha_1+\alpha_2+\alpha_3+\alpha_4,~\alpha_2+\alpha_3+\alpha_4,~\alpha_2$
\item $\alpha_1+\alpha_2+2\alpha_3,~\alpha_1+\alpha_2+\alpha_3+\alpha_4,~\alpha_2+\alpha_3$
\item $\alpha_1+\alpha_2+2\alpha_3,~\alpha_1+\alpha_2+\alpha_3+\alpha_4,~\alpha_2$
\item $\alpha_1+\alpha_2+2\alpha_3,~\alpha_1+\alpha_2+\alpha_3+\alpha_4,~\alpha_2,~\alpha_4$
\item $\alpha_1+\alpha_2+2\alpha_3,~\alpha_1+\alpha_2+\alpha_3+\alpha_4,~\alpha_4$
\item $\alpha_1+\alpha_2+2\alpha_3,~\alpha_1+\alpha_2$
\item $\alpha_1+\alpha_2+2\alpha_3,~\alpha_1+\alpha_2,~\alpha_2+2\alpha_3+2\alpha_4$
\item $\alpha_1+\alpha_2+2\alpha_3,~\alpha_1+\alpha_2,~\alpha_2+2\alpha_3+2\alpha_4,~\alpha_2+\alpha_3$
\item $\alpha_1+\alpha_2+2\alpha_3,~\alpha_1+\alpha_2,~\alpha_2+2\alpha_3+2\alpha_4,~\alpha_2+\alpha_3,~\alpha_2$
\item $\alpha_1+\alpha_2+2\alpha_3,~\alpha_1+\alpha_2,~\alpha_2+2\alpha_3+2\alpha_4,~\alpha_2$
\item $\alpha_1+\alpha_2+2\alpha_3,~\alpha_1+\alpha_2,~\alpha_2+2\alpha_3+\alpha_4$
\item $\alpha_1+\alpha_2+2\alpha_3,~\alpha_1+\alpha_2,~\alpha_2+2\alpha_3+\alpha_4,~\alpha_2+\alpha_3+\alpha_4$
\item $\alpha_1+\alpha_2+2\alpha_3,~\alpha_1+\alpha_2,~\alpha_2+\alpha_3+\alpha_4$
\item $\alpha_1+\alpha_2+2\alpha_3,~\alpha_1+\alpha_2,~\alpha_2+\alpha_3$
\item $\alpha_1+\alpha_2+2\alpha_3,~\alpha_1+\alpha_2,~\alpha_2+\alpha_3,~\alpha_2$
\item $\alpha_1+\alpha_2+2\alpha_3,~\alpha_1+\alpha_2,~\alpha_2+\alpha_3,~\alpha_2,~\alpha_3+\alpha_4$
\item $\alpha_1+\alpha_2+2\alpha_3,~\alpha_1+\alpha_2,~\alpha_2+\alpha_3,~\alpha_2,~\alpha_3+\alpha_4,~\alpha_4$
\item $\alpha_1+\alpha_2+2\alpha_3,~\alpha_1+\alpha_2,~\alpha_2+\alpha_3,~\alpha_2,~\alpha_4$
\item $\alpha_1+\alpha_2+2\alpha_3,~\alpha_1+\alpha_2,~\alpha_2+\alpha_3,~\alpha_3+\alpha_4$
\item $\alpha_1+\alpha_2+2\alpha_3,~\alpha_1+\alpha_2,~\alpha_2+\alpha_3,~\alpha_3+\alpha_4,~\alpha_4$
\item $\alpha_1+\alpha_2+2\alpha_3,~\alpha_1+\alpha_2,~\alpha_2+\alpha_3,~\alpha_4$
\item $\alpha_1+\alpha_2+2\alpha_3,~\alpha_1+\alpha_2,~\alpha_2$
\item $\alpha_1+\alpha_2+2\alpha_3,~\alpha_1+\alpha_2,~\alpha_2,~\alpha_3+\alpha_4$
\item $\alpha_1+\alpha_2+2\alpha_3,~\alpha_1+\alpha_2,~\alpha_2,~\alpha_3+\alpha_4,~\alpha_4$
\item $\alpha_1+\alpha_2+2\alpha_3,~\alpha_1+\alpha_2,~\alpha_2,~\alpha_4$
\item $\alpha_1+\alpha_2+2\alpha_3,~\alpha_1+\alpha_2,~\alpha_3+\alpha_4$
\item $\alpha_1+\alpha_2+2\alpha_3,~\alpha_1+\alpha_2,~\alpha_3+\alpha_4,~\alpha_4$
\item $\alpha_1+\alpha_2+2\alpha_3,~\alpha_1+\alpha_2,~\alpha_4$
\item $\alpha_1+\alpha_2+2\alpha_3,~\alpha_2+2\alpha_3+2\alpha_4$
\item $\alpha_1+\alpha_2+2\alpha_3,~\alpha_2+2\alpha_3+2\alpha_4,~\alpha_2+\alpha_3$
\item $\alpha_1+\alpha_2+2\alpha_3,~\alpha_2+2\alpha_3+2\alpha_4,~\alpha_2+\alpha_3,~\alpha_2$
\item $\alpha_1+\alpha_2+2\alpha_3,~\alpha_2+2\alpha_3+2\alpha_4,~\alpha_2$
\item $\alpha_1+\alpha_2+2\alpha_3,~\alpha_2+2\alpha_3+\alpha_4$
\item $\alpha_1+\alpha_2+2\alpha_3,~\alpha_2+2\alpha_3+\alpha_4,~\alpha_2+\alpha_3+\alpha_4$
\item $\alpha_1+\alpha_2+2\alpha_3,~\alpha_2+2\alpha_3+\alpha_4,~\alpha_2$
\item $\alpha_1+\alpha_2+2\alpha_3,~\alpha_2+2\alpha_3+\alpha_4,~\alpha_2,~\alpha_4$
\item $\alpha_1+\alpha_2+2\alpha_3,~\alpha_2+2\alpha_3+\alpha_4,~\alpha_4$
\item $\alpha_1+\alpha_2+2\alpha_3,~\alpha_2+\alpha_3+\alpha_4$
\item $\alpha_1+\alpha_2+2\alpha_3,~\alpha_2+\alpha_3$
\item $\alpha_1+\alpha_2+2\alpha_3,~\alpha_2+\alpha_3,~\alpha_2$
\item $\alpha_1+\alpha_2+2\alpha_3,~\alpha_2+\alpha_3,~\alpha_2,~\alpha_3+\alpha_4$
\item $\alpha_1+\alpha_2+2\alpha_3,~\alpha_2+\alpha_3,~\alpha_2,~\alpha_3+\alpha_4,~\alpha_4$
\item $\alpha_1+\alpha_2+2\alpha_3,~\alpha_2+\alpha_3,~\alpha_2,~\alpha_4$
\item $\alpha_1+\alpha_2+2\alpha_3,~\alpha_2+\alpha_3,~\alpha_3+\alpha_4$
\item $\alpha_1+\alpha_2+2\alpha_3,~\alpha_2+\alpha_3,~\alpha_3+\alpha_4,~\alpha_4$
\item $\alpha_1+\alpha_2+2\alpha_3,~\alpha_2+\alpha_3,~\alpha_4$
\item $\alpha_1+\alpha_2+2\alpha_3,~\alpha_2$
\item $\alpha_1+\alpha_2+2\alpha_3,~\alpha_2,~\alpha_3+\alpha_4$
\item $\alpha_1+\alpha_2+2\alpha_3,~\alpha_2,~\alpha_3+\alpha_4,~\alpha_4$
\item $\alpha_1+\alpha_2+2\alpha_3,~\alpha_2,~\alpha_4$
\item $\alpha_1+\alpha_2+2\alpha_3,~\alpha_3+\alpha_4$
\item $\alpha_1+\alpha_2+2\alpha_3,~\alpha_3+\alpha_4,~\alpha_4$
\item $\alpha_1+\alpha_2+2\alpha_3,~\alpha_4$
\item $\alpha_1+\alpha_2+2\alpha_3+\alpha_4$
\item $\alpha_1+\alpha_2+2\alpha_3+\alpha_4,~\alpha_1+\alpha_2$
\item $\alpha_1+\alpha_2+2\alpha_3+\alpha_4,~\alpha_1+\alpha_2,~\alpha_2+2\alpha_3+2\alpha_4$
\item $\alpha_1+\alpha_2+2\alpha_3+\alpha_4,~\alpha_1+\alpha_2,~\alpha_2+2\alpha_3+2\alpha_4,~\alpha_2+2\alpha_3$
\item $\alpha_1+\alpha_2+2\alpha_3+\alpha_4,~\alpha_1+\alpha_2,~\alpha_2+2\alpha_3+2\alpha_4,~\alpha_2+2\alpha_3,~\alpha_2+\alpha_3+\alpha_4$
\item $\alpha_1+\alpha_2+2\alpha_3+\alpha_4,~\alpha_1+\alpha_2,~\alpha_2+2\alpha_3+2\alpha_4,~\alpha_2+2\alpha_3,~\alpha_2+\alpha_3+\alpha_4,~\alpha_2+\alpha_3$
\item $\alpha_1+\alpha_2+2\alpha_3+\alpha_4,~\alpha_1+\alpha_2,~\alpha_2+2\alpha_3+2\alpha_4,~\alpha_2+2\alpha_3,~\alpha_2+\alpha_3+\alpha_4,~\alpha_2+\alpha_3,~\alpha_2$
\item $\alpha_1+\alpha_2+2\alpha_3+\alpha_4,~\alpha_1+\alpha_2,~\alpha_2+2\alpha_3+2\alpha_4,~\alpha_2+2\alpha_3,~\alpha_2+\alpha_3+\alpha_4,~\alpha_2$
\item $\alpha_1+\alpha_2+2\alpha_3+\alpha_4,~\alpha_1+\alpha_2,~\alpha_2+2\alpha_3+2\alpha_4,~\alpha_2+2\alpha_3,~\alpha_2+\alpha_3$
\item $\alpha_1+\alpha_2+2\alpha_3+\alpha_4,~\alpha_1+\alpha_2,~\alpha_2+2\alpha_3+2\alpha_4,~\alpha_2+2\alpha_3,~\alpha_2+\alpha_3,~\alpha_2$
\item $\alpha_1+\alpha_2+2\alpha_3+\alpha_4,~\alpha_1+\alpha_2,~\alpha_2+2\alpha_3+2\alpha_4,~\alpha_2+2\alpha_3,~\alpha_2$
\item $\alpha_1+\alpha_2+2\alpha_3+\alpha_4,~\alpha_1+\alpha_2,~\alpha_2+2\alpha_3+2\alpha_4,~\alpha_2+\alpha_3+\alpha_4$
\item $\alpha_1+\alpha_2+2\alpha_3+\alpha_4,~\alpha_1+\alpha_2,~\alpha_2+2\alpha_3+2\alpha_4,~\alpha_2+\alpha_3+\alpha_4,~\alpha_2+\alpha_3$
\item $\alpha_1+\alpha_2+2\alpha_3+\alpha_4,~\alpha_1+\alpha_2,~\alpha_2+2\alpha_3+2\alpha_4,~\alpha_2+\alpha_3+\alpha_4,~\alpha_2+\alpha_3,~\alpha_2$
\item $\alpha_1+\alpha_2+2\alpha_3+\alpha_4,~\alpha_1+\alpha_2,~\alpha_2+2\alpha_3+2\alpha_4,~\alpha_2+\alpha_3+\alpha_4,~\alpha_2$
\item $\alpha_1+\alpha_2+2\alpha_3+\alpha_4,~\alpha_1+\alpha_2,~\alpha_2+2\alpha_3+2\alpha_4,~\alpha_2+\alpha_3$
\item $\alpha_1+\alpha_2+2\alpha_3+\alpha_4,~\alpha_1+\alpha_2,~\alpha_2+2\alpha_3+2\alpha_4,~\alpha_2+\alpha_3,~\alpha_2$
\item $\alpha_1+\alpha_2+2\alpha_3+\alpha_4,~\alpha_1+\alpha_2,~\alpha_2+2\alpha_3+2\alpha_4,~\alpha_2$
\item $\alpha_1+\alpha_2+2\alpha_3+\alpha_4,~\alpha_1+\alpha_2,~\alpha_2+2\alpha_3$
\item $\alpha_1+\alpha_2+2\alpha_3+\alpha_4,~\alpha_1+\alpha_2,~\alpha_2+2\alpha_3,~\alpha_2+\alpha_3+\alpha_4$
\item $\alpha_1+\alpha_2+2\alpha_3+\alpha_4,~\alpha_1+\alpha_2,~\alpha_2+2\alpha_3,~\alpha_2+\alpha_3+\alpha_4,~\alpha_2+\alpha_3$
\item $\alpha_1+\alpha_2+2\alpha_3+\alpha_4,~\alpha_1+\alpha_2,~\alpha_2+2\alpha_3,~\alpha_2+\alpha_3+\alpha_4,~\alpha_2+\alpha_3,~\alpha_2$
\item $\alpha_1+\alpha_2+2\alpha_3+\alpha_4,~\alpha_1+\alpha_2,~\alpha_2+2\alpha_3,~\alpha_2+\alpha_3+\alpha_4,~\alpha_2$
\item $\alpha_1+\alpha_2+2\alpha_3+\alpha_4,~\alpha_1+\alpha_2,~\alpha_2+2\alpha_3,~\alpha_2+\alpha_3$
\item $\alpha_1+\alpha_2+2\alpha_3+\alpha_4,~\alpha_1+\alpha_2,~\alpha_2+2\alpha_3,~\alpha_2+\alpha_3,~\alpha_2$
\item $\alpha_1+\alpha_2+2\alpha_3+\alpha_4,~\alpha_1+\alpha_2,~\alpha_2+2\alpha_3,~\alpha_2$
\item $\alpha_1+\alpha_2+2\alpha_3+\alpha_4,~\alpha_1+\alpha_2,~\alpha_2+\alpha_3+\alpha_4$
\item $\alpha_1+\alpha_2+2\alpha_3+\alpha_4,~\alpha_1+\alpha_2,~\alpha_2+\alpha_3+\alpha_4,~\alpha_2+\alpha_3$
\item $\alpha_1+\alpha_2+2\alpha_3+\alpha_4,~\alpha_1+\alpha_2,~\alpha_2+\alpha_3+\alpha_4,~\alpha_2+\alpha_3,~\alpha_2$
\item $\alpha_1+\alpha_2+2\alpha_3+\alpha_4,~\alpha_1+\alpha_2,~\alpha_2+\alpha_3+\alpha_4,~\alpha_2$
\item $\alpha_1+\alpha_2+2\alpha_3+\alpha_4,~\alpha_1+\alpha_2,~\alpha_2+\alpha_3$
\item $\alpha_1+\alpha_2+2\alpha_3+\alpha_4,~\alpha_1+\alpha_2,~\alpha_2+\alpha_3,~\alpha_2$
\item $\alpha_1+\alpha_2+2\alpha_3+\alpha_4,~\alpha_1+\alpha_2,~\alpha_2$
\item $\alpha_1+\alpha_2+2\alpha_3+\alpha_4,~\alpha_2+2\alpha_3+2\alpha_4$
\item $\alpha_1+\alpha_2+2\alpha_3+\alpha_4,~\alpha_2+2\alpha_3+2\alpha_4,~\alpha_2+2\alpha_3$
\item $\alpha_1+\alpha_2+2\alpha_3+\alpha_4,~\alpha_2+2\alpha_3+2\alpha_4,~\alpha_2+2\alpha_3,~\alpha_2+\alpha_3+\alpha_4$
\item $\alpha_1+\alpha_2+2\alpha_3+\alpha_4,~\alpha_2+2\alpha_3+2\alpha_4,~\alpha_2+2\alpha_3,~\alpha_2+\alpha_3+\alpha_4,~\alpha_2+\alpha_3$
\item $\alpha_1+\alpha_2+2\alpha_3+\alpha_4,~\alpha_2+2\alpha_3+2\alpha_4,~\alpha_2+2\alpha_3,~\alpha_2+\alpha_3+\alpha_4,~\alpha_2+\alpha_3,~\alpha_2$
\item $\alpha_1+\alpha_2+2\alpha_3+\alpha_4,~\alpha_2+2\alpha_3+2\alpha_4,~\alpha_2+2\alpha_3,~\alpha_2+\alpha_3+\alpha_4,~\alpha_2$
\item $\alpha_1+\alpha_2+2\alpha_3+\alpha_4,~\alpha_2+2\alpha_3+2\alpha_4,~\alpha_2+2\alpha_3,~\alpha_2+\alpha_3$
\item $\alpha_1+\alpha_2+2\alpha_3+\alpha_4,~\alpha_2+2\alpha_3+2\alpha_4,~\alpha_2+2\alpha_3,~\alpha_2+\alpha_3,~\alpha_2$
\item $\alpha_1+\alpha_2+2\alpha_3+\alpha_4,~\alpha_2+2\alpha_3+2\alpha_4,~\alpha_2+2\alpha_3,~\alpha_2$
\item $\alpha_1+\alpha_2+2\alpha_3+\alpha_4,~\alpha_2+2\alpha_3+2\alpha_4,~\alpha_2+\alpha_3+\alpha_4$
\item $\alpha_1+\alpha_2+2\alpha_3+\alpha_4,~\alpha_2+2\alpha_3+2\alpha_4,~\alpha_2+\alpha_3+\alpha_4,~\alpha_2+\alpha_3$
\item $\alpha_1+\alpha_2+2\alpha_3+\alpha_4,~\alpha_2+2\alpha_3+2\alpha_4,~\alpha_2+\alpha_3+\alpha_4,~\alpha_2+\alpha_3,~\alpha_2$
\item $\alpha_1+\alpha_2+2\alpha_3+\alpha_4,~\alpha_2+2\alpha_3+2\alpha_4,~\alpha_2+\alpha_3+\alpha_4,~\alpha_2$
\item $\alpha_1+\alpha_2+2\alpha_3+\alpha_4,~\alpha_2+2\alpha_3+2\alpha_4,~\alpha_2+\alpha_3$
\item $\alpha_1+\alpha_2+2\alpha_3+\alpha_4,~\alpha_2+2\alpha_3+2\alpha_4,~\alpha_2+\alpha_3,~\alpha_2$
\item $\alpha_1+\alpha_2+2\alpha_3+\alpha_4,~\alpha_2+2\alpha_3+2\alpha_4,~\alpha_2$
\item $\alpha_1+\alpha_2+2\alpha_3+\alpha_4,~\alpha_2+2\alpha_3$
\item $\alpha_1+\alpha_2+2\alpha_3+\alpha_4,~\alpha_2+2\alpha_3,~\alpha_2+\alpha_3+\alpha_4$
\item $\alpha_1+\alpha_2+2\alpha_3+\alpha_4,~\alpha_2+2\alpha_3,~\alpha_2+\alpha_3+\alpha_4,~\alpha_2+\alpha_3$
\item $\alpha_1+\alpha_2+2\alpha_3+\alpha_4,~\alpha_2+2\alpha_3,~\alpha_2+\alpha_3+\alpha_4,~\alpha_2+\alpha_3,~\alpha_2$
\item $\alpha_1+\alpha_2+2\alpha_3+\alpha_4,~\alpha_2+2\alpha_3,~\alpha_2+\alpha_3+\alpha_4,~\alpha_2$
\item $\alpha_1+\alpha_2+2\alpha_3+\alpha_4,~\alpha_2+2\alpha_3,~\alpha_2+\alpha_3$
\item $\alpha_1+\alpha_2+2\alpha_3+\alpha_4,~\alpha_2+2\alpha_3,~\alpha_2+\alpha_3,~\alpha_2$
\item $\alpha_1+\alpha_2+2\alpha_3+\alpha_4,~\alpha_2+2\alpha_3,~\alpha_2$
\item $\alpha_1+\alpha_2+2\alpha_3+\alpha_4,~\alpha_2+\alpha_3+\alpha_4$
\item $\alpha_1+\alpha_2+2\alpha_3+\alpha_4,~\alpha_2+\alpha_3+\alpha_4,~\alpha_2+\alpha_3$
\item $\alpha_1+\alpha_2+2\alpha_3+\alpha_4,~\alpha_2+\alpha_3+\alpha_4,~\alpha_2+\alpha_3,~\alpha_2$
\item $\alpha_1+\alpha_2+2\alpha_3+\alpha_4,~\alpha_2+\alpha_3+\alpha_4,~\alpha_2$
\item $\alpha_1+\alpha_2+2\alpha_3+\alpha_4,~\alpha_2+\alpha_3$
\item $\alpha_1+\alpha_2+2\alpha_3+\alpha_4,~\alpha_2+\alpha_3,~\alpha_2$
\item $\alpha_1+\alpha_2+2\alpha_3+\alpha_4,~\alpha_2$
\item $\alpha_1+\alpha_2+2\alpha_3+2\alpha_4$
\item $\alpha_1+\alpha_2+2\alpha_3+2\alpha_4,~\alpha_1+\alpha_2+2\alpha_3$
\item $\alpha_1+\alpha_2+2\alpha_3+2\alpha_4,~\alpha_1+\alpha_2+2\alpha_3,~\alpha_1+\alpha_2$
\item $\alpha_1+\alpha_2+2\alpha_3+2\alpha_4,~\alpha_1+\alpha_2+2\alpha_3,~\alpha_1+\alpha_2,~\alpha_2+2\alpha_3+\alpha_4$
\item $\alpha_1+\alpha_2+2\alpha_3+2\alpha_4,~\alpha_1+\alpha_2+2\alpha_3,~\alpha_1+\alpha_2,~\alpha_2+2\alpha_3+\alpha_4,~\alpha_2+2\alpha_3$
\item $\alpha_1+\alpha_2+2\alpha_3+2\alpha_4,~\alpha_1+\alpha_2+2\alpha_3,~\alpha_1+\alpha_2,~\alpha_2+2\alpha_3+\alpha_4,~\alpha_2+2\alpha_3,~\alpha_2+\alpha_3+\alpha_4$
\item $\alpha_1+\alpha_2+2\alpha_3+2\alpha_4,~\alpha_1+\alpha_2+2\alpha_3,~\alpha_1+\alpha_2,~\alpha_2+2\alpha_3+\alpha_4,~\alpha_2+2\alpha_3,~\alpha_2+\alpha_3+\alpha_4,~\alpha_2+\alpha_3$
\item $\alpha_1+\alpha_2+2\alpha_3+2\alpha_4,~\alpha_1+\alpha_2+2\alpha_3,~\alpha_1+\alpha_2,~\alpha_2+2\alpha_3+\alpha_4,~\alpha_2+2\alpha_3,~\alpha_2+\alpha_3+\alpha_4,~\alpha_2+\alpha_3,~\alpha_2$
\item $\alpha_1+\alpha_2+2\alpha_3+2\alpha_4,~\alpha_1+\alpha_2+2\alpha_3,~\alpha_1+\alpha_2,~\alpha_2+2\alpha_3+\alpha_4,~\alpha_2+2\alpha_3,~\alpha_2+\alpha_3+\alpha_4,~\alpha_2$
\item $\alpha_1+\alpha_2+2\alpha_3+2\alpha_4,~\alpha_1+\alpha_2+2\alpha_3,~\alpha_1+\alpha_2,~\alpha_2+2\alpha_3+\alpha_4,~\alpha_2+2\alpha_3,~\alpha_2+\alpha_3$
\item $\alpha_1+\alpha_2+2\alpha_3+2\alpha_4,~\alpha_1+\alpha_2+2\alpha_3,~\alpha_1+\alpha_2,~\alpha_2+2\alpha_3+\alpha_4,~\alpha_2+2\alpha_3,~\alpha_2+\alpha_3,~\alpha_2$
\item $\alpha_1+\alpha_2+2\alpha_3+2\alpha_4,~\alpha_1+\alpha_2+2\alpha_3,~\alpha_1+\alpha_2,~\alpha_2+2\alpha_3+\alpha_4,~\alpha_2+2\alpha_3,~\alpha_2$
\item $\alpha_1+\alpha_2+2\alpha_3+2\alpha_4,~\alpha_1+\alpha_2+2\alpha_3,~\alpha_1+\alpha_2,~\alpha_2+2\alpha_3+\alpha_4,~\alpha_2+\alpha_3+\alpha_4$
\item $\alpha_1+\alpha_2+2\alpha_3+2\alpha_4,~\alpha_1+\alpha_2+2\alpha_3,~\alpha_1+\alpha_2,~\alpha_2+2\alpha_3+\alpha_4,~\alpha_2+\alpha_3+\alpha_4,~\alpha_2+\alpha_3$
\item $\alpha_1+\alpha_2+2\alpha_3+2\alpha_4,~\alpha_1+\alpha_2+2\alpha_3,~\alpha_1+\alpha_2,~\alpha_2+2\alpha_3+\alpha_4,~\alpha_2+\alpha_3+\alpha_4,~\alpha_2+\alpha_3,~\alpha_2$
\item $\alpha_1+\alpha_2+2\alpha_3+2\alpha_4,~\alpha_1+\alpha_2+2\alpha_3,~\alpha_1+\alpha_2,~\alpha_2+2\alpha_3+\alpha_4,~\alpha_2+\alpha_3+\alpha_4,~\alpha_2$
\item $\alpha_1+\alpha_2+2\alpha_3+2\alpha_4,~\alpha_1+\alpha_2+2\alpha_3,~\alpha_1+\alpha_2,~\alpha_2+2\alpha_3+\alpha_4,~\alpha_2+\alpha_3$
\item $\alpha_1+\alpha_2+2\alpha_3+2\alpha_4,~\alpha_1+\alpha_2+2\alpha_3,~\alpha_1+\alpha_2,~\alpha_2+2\alpha_3+\alpha_4,~\alpha_2+\alpha_3,~\alpha_2$
\item $\alpha_1+\alpha_2+2\alpha_3+2\alpha_4,~\alpha_1+\alpha_2+2\alpha_3,~\alpha_1+\alpha_2,~\alpha_2+2\alpha_3+\alpha_4,~\alpha_2$
\item $\alpha_1+\alpha_2+2\alpha_3+2\alpha_4,~\alpha_1+\alpha_2+2\alpha_3,~\alpha_1+\alpha_2,~\alpha_2+2\alpha_3$
\item $\alpha_1+\alpha_2+2\alpha_3+2\alpha_4,~\alpha_1+\alpha_2+2\alpha_3,~\alpha_1+\alpha_2,~\alpha_2+2\alpha_3,~\alpha_2+\alpha_3+\alpha_4$
\item $\alpha_1+\alpha_2+2\alpha_3+2\alpha_4,~\alpha_1+\alpha_2+2\alpha_3,~\alpha_1+\alpha_2,~\alpha_2+2\alpha_3,~\alpha_2+\alpha_3+\alpha_4,~\alpha_2+\alpha_3$
\item $\alpha_1+\alpha_2+2\alpha_3+2\alpha_4,~\alpha_1+\alpha_2+2\alpha_3,~\alpha_1+\alpha_2,~\alpha_2+2\alpha_3,~\alpha_2+\alpha_3+\alpha_4,~\alpha_2+\alpha_3,~\alpha_2$
\item $\alpha_1+\alpha_2+2\alpha_3+2\alpha_4,~\alpha_1+\alpha_2+2\alpha_3,~\alpha_1+\alpha_2,~\alpha_2+2\alpha_3,~\alpha_2+\alpha_3+\alpha_4,~\alpha_2$
\item $\alpha_1+\alpha_2+2\alpha_3+2\alpha_4,~\alpha_1+\alpha_2+2\alpha_3,~\alpha_1+\alpha_2,~\alpha_2+2\alpha_3,~\alpha_2+\alpha_3$
\item $\alpha_1+\alpha_2+2\alpha_3+2\alpha_4,~\alpha_1+\alpha_2+2\alpha_3,~\alpha_1+\alpha_2,~\alpha_2+2\alpha_3,~\alpha_2+\alpha_3,~\alpha_2$
\item $\alpha_1+\alpha_2+2\alpha_3+2\alpha_4,~\alpha_1+\alpha_2+2\alpha_3,~\alpha_1+\alpha_2,~\alpha_2+2\alpha_3,~\alpha_2$
\item $\alpha_1+\alpha_2+2\alpha_3+2\alpha_4,~\alpha_1+\alpha_2+2\alpha_3,~\alpha_1+\alpha_2,~\alpha_2+\alpha_3+\alpha_4$
\item $\alpha_1+\alpha_2+2\alpha_3+2\alpha_4,~\alpha_1+\alpha_2+2\alpha_3,~\alpha_1+\alpha_2,~\alpha_2+\alpha_3+\alpha_4,~\alpha_2+\alpha_3$
\item $\alpha_1+\alpha_2+2\alpha_3+2\alpha_4,~\alpha_1+\alpha_2+2\alpha_3,~\alpha_1+\alpha_2,~\alpha_2+\alpha_3+\alpha_4,~\alpha_2+\alpha_3,~\alpha_2$
\item $\alpha_1+\alpha_2+2\alpha_3+2\alpha_4,~\alpha_1+\alpha_2+2\alpha_3,~\alpha_1+\alpha_2,~\alpha_2+\alpha_3+\alpha_4,~\alpha_2$
\item $\alpha_1+\alpha_2+2\alpha_3+2\alpha_4,~\alpha_1+\alpha_2+2\alpha_3,~\alpha_1+\alpha_2,~\alpha_2+\alpha_3$
\item $\alpha_1+\alpha_2+2\alpha_3+2\alpha_4,~\alpha_1+\alpha_2+2\alpha_3,~\alpha_1+\alpha_2,~\alpha_2+\alpha_3,~\alpha_2$
\item $\alpha_1+\alpha_2+2\alpha_3+2\alpha_4,~\alpha_1+\alpha_2+2\alpha_3,~\alpha_1+\alpha_2,~\alpha_2$
\item $\alpha_1+\alpha_2+2\alpha_3+2\alpha_4,~\alpha_1+\alpha_2+2\alpha_3,~\alpha_2+2\alpha_3+\alpha_4$
\item $\alpha_1+\alpha_2+2\alpha_3+2\alpha_4,~\alpha_1+\alpha_2+2\alpha_3,~\alpha_2+2\alpha_3+\alpha_4,~\alpha_2+2\alpha_3$
\item $\alpha_1+\alpha_2+2\alpha_3+2\alpha_4,~\alpha_1+\alpha_2+2\alpha_3,~\alpha_2+2\alpha_3+\alpha_4,~\alpha_2+2\alpha_3,~\alpha_2+\alpha_3+\alpha_4$
\item $\alpha_1+\alpha_2+2\alpha_3+2\alpha_4,~\alpha_1+\alpha_2+2\alpha_3,~\alpha_2+2\alpha_3+\alpha_4,~\alpha_2+2\alpha_3,~\alpha_2+\alpha_3+\alpha_4,~\alpha_2+\alpha_3$
\item $\alpha_1+\alpha_2+2\alpha_3+2\alpha_4,~\alpha_1+\alpha_2+2\alpha_3,~\alpha_2+2\alpha_3+\alpha_4,~\alpha_2+2\alpha_3,~\alpha_2+\alpha_3+\alpha_4,~\alpha_2+\alpha_3,~\alpha_2$
\item $\alpha_1+\alpha_2+2\alpha_3+2\alpha_4,~\alpha_1+\alpha_2+2\alpha_3,~\alpha_2+2\alpha_3+\alpha_4,~\alpha_2+2\alpha_3,~\alpha_2+\alpha_3+\alpha_4,~\alpha_2$
\item $\alpha_1+\alpha_2+2\alpha_3+2\alpha_4,~\alpha_1+\alpha_2+2\alpha_3,~\alpha_2+2\alpha_3+\alpha_4,~\alpha_2+2\alpha_3,~\alpha_2+\alpha_3$
\item $\alpha_1+\alpha_2+2\alpha_3+2\alpha_4,~\alpha_1+\alpha_2+2\alpha_3,~\alpha_2+2\alpha_3+\alpha_4,~\alpha_2+2\alpha_3,~\alpha_2+\alpha_3,~\alpha_2$
\item $\alpha_1+\alpha_2+2\alpha_3+2\alpha_4,~\alpha_1+\alpha_2+2\alpha_3,~\alpha_2+2\alpha_3+\alpha_4,~\alpha_2+2\alpha_3,~\alpha_2$
\item $\alpha_1+\alpha_2+2\alpha_3+2\alpha_4,~\alpha_1+\alpha_2+2\alpha_3,~\alpha_2+2\alpha_3+\alpha_4,~\alpha_2+\alpha_3+\alpha_4$
\item $\alpha_1+\alpha_2+2\alpha_3+2\alpha_4,~\alpha_1+\alpha_2+2\alpha_3,~\alpha_2+2\alpha_3+\alpha_4,~\alpha_2+\alpha_3+\alpha_4,~\alpha_2+\alpha_3$
\item $\alpha_1+\alpha_2+2\alpha_3+2\alpha_4,~\alpha_1+\alpha_2+2\alpha_3,~\alpha_2+2\alpha_3+\alpha_4,~\alpha_2+\alpha_3+\alpha_4,~\alpha_2+\alpha_3,~\alpha_2$
\item $\alpha_1+\alpha_2+2\alpha_3+2\alpha_4,~\alpha_1+\alpha_2+2\alpha_3,~\alpha_2+2\alpha_3+\alpha_4,~\alpha_2+\alpha_3+\alpha_4,~\alpha_2$
\item $\alpha_1+\alpha_2+2\alpha_3+2\alpha_4,~\alpha_1+\alpha_2+2\alpha_3,~\alpha_2+2\alpha_3+\alpha_4,~\alpha_2+\alpha_3$
\item $\alpha_1+\alpha_2+2\alpha_3+2\alpha_4,~\alpha_1+\alpha_2+2\alpha_3,~\alpha_2+2\alpha_3+\alpha_4,~\alpha_2+\alpha_3,~\alpha_2$
\item $\alpha_1+\alpha_2+2\alpha_3+2\alpha_4,~\alpha_1+\alpha_2+2\alpha_3,~\alpha_2+2\alpha_3+\alpha_4,~\alpha_2$
\item $\alpha_1+\alpha_2+2\alpha_3+2\alpha_4,~\alpha_1+\alpha_2+2\alpha_3,~\alpha_2+2\alpha_3$
\item $\alpha_1+\alpha_2+2\alpha_3+2\alpha_4,~\alpha_1+\alpha_2+2\alpha_3,~\alpha_2+2\alpha_3,~\alpha_2+\alpha_3+\alpha_4$
\item $\alpha_1+\alpha_2+2\alpha_3+2\alpha_4,~\alpha_1+\alpha_2+2\alpha_3,~\alpha_2+2\alpha_3,~\alpha_2+\alpha_3+\alpha_4,~\alpha_2+\alpha_3$
\item $\alpha_1+\alpha_2+2\alpha_3+2\alpha_4,~\alpha_1+\alpha_2+2\alpha_3,~\alpha_2+2\alpha_3,~\alpha_2+\alpha_3+\alpha_4,~\alpha_2+\alpha_3,~\alpha_2$
\item $\alpha_1+\alpha_2+2\alpha_3+2\alpha_4,~\alpha_1+\alpha_2+2\alpha_3,~\alpha_2+2\alpha_3,~\alpha_2+\alpha_3+\alpha_4,~\alpha_2$
\item $\alpha_1+\alpha_2+2\alpha_3+2\alpha_4,~\alpha_1+\alpha_2+2\alpha_3,~\alpha_2+2\alpha_3,~\alpha_2+\alpha_3$
\item $\alpha_1+\alpha_2+2\alpha_3+2\alpha_4,~\alpha_1+\alpha_2+2\alpha_3,~\alpha_2+2\alpha_3,~\alpha_2+\alpha_3,~\alpha_2$
\item $\alpha_1+\alpha_2+2\alpha_3+2\alpha_4,~\alpha_1+\alpha_2+2\alpha_3,~\alpha_2+2\alpha_3,~\alpha_2$
\item $\alpha_1+\alpha_2+2\alpha_3+2\alpha_4,~\alpha_1+\alpha_2+2\alpha_3,~\alpha_2+\alpha_3+\alpha_4$
\item $\alpha_1+\alpha_2+2\alpha_3+2\alpha_4,~\alpha_1+\alpha_2+2\alpha_3,~\alpha_2+\alpha_3+\alpha_4,~\alpha_2+\alpha_3$
\item $\alpha_1+\alpha_2+2\alpha_3+2\alpha_4,~\alpha_1+\alpha_2+2\alpha_3,~\alpha_2+\alpha_3+\alpha_4,~\alpha_2+\alpha_3,~\alpha_2$
\item $\alpha_1+\alpha_2+2\alpha_3+2\alpha_4,~\alpha_1+\alpha_2+2\alpha_3,~\alpha_2+\alpha_3+\alpha_4,~\alpha_2$
\item $\alpha_1+\alpha_2+2\alpha_3+2\alpha_4,~\alpha_1+\alpha_2+2\alpha_3,~\alpha_2+\alpha_3$
\item $\alpha_1+\alpha_2+2\alpha_3+2\alpha_4,~\alpha_1+\alpha_2+2\alpha_3,~\alpha_2+\alpha_3,~\alpha_2$
\item $\alpha_1+\alpha_2+2\alpha_3+2\alpha_4,~\alpha_1+\alpha_2+2\alpha_3,~\alpha_2$
\item $\alpha_1+\alpha_2+2\alpha_3+2\alpha_4,~\alpha_1+\alpha_2+\alpha_3$
\item $\alpha_1+\alpha_2+2\alpha_3+2\alpha_4,~\alpha_1+\alpha_2+\alpha_3,~\alpha_2+2\alpha_3+\alpha_4$
\item $\alpha_1+\alpha_2+2\alpha_3+2\alpha_4,~\alpha_1+\alpha_2+\alpha_3,~\alpha_2+2\alpha_3+\alpha_4,~\alpha_2+2\alpha_3$
\item $\alpha_1+\alpha_2+2\alpha_3+2\alpha_4,~\alpha_1+\alpha_2+\alpha_3,~\alpha_2+2\alpha_3+\alpha_4,~\alpha_2+2\alpha_3,~\alpha_2+\alpha_3+\alpha_4$
\item $\alpha_1+\alpha_2+2\alpha_3+2\alpha_4,~\alpha_1+\alpha_2+\alpha_3,~\alpha_2+2\alpha_3+\alpha_4,~\alpha_2+2\alpha_3,~\alpha_2+\alpha_3+\alpha_4,~\alpha_2+\alpha_3$
\item $\alpha_1+\alpha_2+2\alpha_3+2\alpha_4,~\alpha_1+\alpha_2+\alpha_3,~\alpha_2+2\alpha_3+\alpha_4,~\alpha_2+2\alpha_3,~\alpha_2+\alpha_3+\alpha_4,~\alpha_2+\alpha_3,~\alpha_2$
\item $\alpha_1+\alpha_2+2\alpha_3+2\alpha_4,~\alpha_1+\alpha_2+\alpha_3,~\alpha_2+2\alpha_3+\alpha_4,~\alpha_2+2\alpha_3,~\alpha_2+\alpha_3+\alpha_4,~\alpha_2$
\item $\alpha_1+\alpha_2+2\alpha_3+2\alpha_4,~\alpha_1+\alpha_2+\alpha_3,~\alpha_2+2\alpha_3+\alpha_4,~\alpha_2+2\alpha_3,~\alpha_2+\alpha_3$
\item $\alpha_1+\alpha_2+2\alpha_3+2\alpha_4,~\alpha_1+\alpha_2+\alpha_3,~\alpha_2+2\alpha_3+\alpha_4,~\alpha_2+2\alpha_3,~\alpha_2+\alpha_3,~\alpha_2$
\item $\alpha_1+\alpha_2+2\alpha_3+2\alpha_4,~\alpha_1+\alpha_2+\alpha_3,~\alpha_2+2\alpha_3+\alpha_4,~\alpha_2+2\alpha_3,~\alpha_2$
\item $\alpha_1+\alpha_2+2\alpha_3+2\alpha_4,~\alpha_1+\alpha_2+\alpha_3,~\alpha_2+2\alpha_3+\alpha_4,~\alpha_2+\alpha_3+\alpha_4$
\item $\alpha_1+\alpha_2+2\alpha_3+2\alpha_4,~\alpha_1+\alpha_2+\alpha_3,~\alpha_2+2\alpha_3+\alpha_4,~\alpha_2+\alpha_3+\alpha_4,~\alpha_2+\alpha_3$
\item $\alpha_1+\alpha_2+2\alpha_3+2\alpha_4,~\alpha_1+\alpha_2+\alpha_3,~\alpha_2+2\alpha_3+\alpha_4,~\alpha_2+\alpha_3+\alpha_4,~\alpha_2+\alpha_3,~\alpha_2$
\item $\alpha_1+\alpha_2+2\alpha_3+2\alpha_4,~\alpha_1+\alpha_2+\alpha_3,~\alpha_2+2\alpha_3+\alpha_4,~\alpha_2+\alpha_3+\alpha_4,~\alpha_2$
\item $\alpha_1+\alpha_2+2\alpha_3+2\alpha_4,~\alpha_1+\alpha_2+\alpha_3,~\alpha_2+2\alpha_3+\alpha_4,~\alpha_2+\alpha_3$
\item $\alpha_1+\alpha_2+2\alpha_3+2\alpha_4,~\alpha_1+\alpha_2+\alpha_3,~\alpha_2+2\alpha_3+\alpha_4,~\alpha_2+\alpha_3,~\alpha_2$
\item $\alpha_1+\alpha_2+2\alpha_3+2\alpha_4,~\alpha_1+\alpha_2+\alpha_3,~\alpha_2+2\alpha_3+\alpha_4,~\alpha_2$
\item $\alpha_1+\alpha_2+2\alpha_3+2\alpha_4,~\alpha_1+\alpha_2+\alpha_3,~\alpha_2+2\alpha_3$
\item $\alpha_1+\alpha_2+2\alpha_3+2\alpha_4,~\alpha_1+\alpha_2+\alpha_3,~\alpha_2+2\alpha_3,~\alpha_2+\alpha_3+\alpha_4$
\item $\alpha_1+\alpha_2+2\alpha_3+2\alpha_4,~\alpha_1+\alpha_2+\alpha_3,~\alpha_2+2\alpha_3,~\alpha_2+\alpha_3+\alpha_4,~\alpha_2+\alpha_3$
\item $\alpha_1+\alpha_2+2\alpha_3+2\alpha_4,~\alpha_1+\alpha_2+\alpha_3,~\alpha_2+2\alpha_3,~\alpha_2+\alpha_3+\alpha_4,~\alpha_2+\alpha_3,~\alpha_2$
\item $\alpha_1+\alpha_2+2\alpha_3+2\alpha_4,~\alpha_1+\alpha_2+\alpha_3,~\alpha_2+2\alpha_3,~\alpha_2+\alpha_3+\alpha_4,~\alpha_2$
\item $\alpha_1+\alpha_2+2\alpha_3+2\alpha_4,~\alpha_1+\alpha_2+\alpha_3,~\alpha_2+2\alpha_3,~\alpha_2+\alpha_3$
\item $\alpha_1+\alpha_2+2\alpha_3+2\alpha_4,~\alpha_1+\alpha_2+\alpha_3,~\alpha_2+2\alpha_3,~\alpha_2+\alpha_3,~\alpha_2$
\item $\alpha_1+\alpha_2+2\alpha_3+2\alpha_4,~\alpha_1+\alpha_2+\alpha_3,~\alpha_2+2\alpha_3,~\alpha_2$
\item $\alpha_1+\alpha_2+2\alpha_3+2\alpha_4,~\alpha_1+\alpha_2+\alpha_3,~\alpha_2+\alpha_3+\alpha_4$
\item $\alpha_1+\alpha_2+2\alpha_3+2\alpha_4,~\alpha_1+\alpha_2+\alpha_3,~\alpha_2+\alpha_3+\alpha_4,~\alpha_2+\alpha_3$
\item $\alpha_1+\alpha_2+2\alpha_3+2\alpha_4,~\alpha_1+\alpha_2+\alpha_3,~\alpha_2+\alpha_3+\alpha_4,~\alpha_2+\alpha_3,~\alpha_2$
\item $\alpha_1+\alpha_2+2\alpha_3+2\alpha_4,~\alpha_1+\alpha_2+\alpha_3,~\alpha_2+\alpha_3+\alpha_4,~\alpha_2$
\item $\alpha_1+\alpha_2+2\alpha_3+2\alpha_4,~\alpha_1+\alpha_2+\alpha_3,~\alpha_2+\alpha_3$
\item $\alpha_1+\alpha_2+2\alpha_3+2\alpha_4,~\alpha_1+\alpha_2+\alpha_3,~\alpha_2+\alpha_3,~\alpha_2$
\item $\alpha_1+\alpha_2+2\alpha_3+2\alpha_4,~\alpha_1+\alpha_2+\alpha_3,~\alpha_2$
\item $\alpha_1+\alpha_2+2\alpha_3+2\alpha_4,~\alpha_1+\alpha_2$
\item $\alpha_1+\alpha_2+2\alpha_3+2\alpha_4,~\alpha_1+\alpha_2,~\alpha_2+2\alpha_3+\alpha_4$
\item $\alpha_1+\alpha_2+2\alpha_3+2\alpha_4,~\alpha_1+\alpha_2,~\alpha_2+2\alpha_3+\alpha_4,~\alpha_2+2\alpha_3$
\item $\alpha_1+\alpha_2+2\alpha_3+2\alpha_4,~\alpha_1+\alpha_2,~\alpha_2+2\alpha_3+\alpha_4,~\alpha_2+2\alpha_3,~\alpha_2+\alpha_3$
\item $\alpha_1+\alpha_2+2\alpha_3+2\alpha_4,~\alpha_1+\alpha_2,~\alpha_2+2\alpha_3+\alpha_4,~\alpha_2+2\alpha_3,~\alpha_2+\alpha_3,~\alpha_2$
\item $\alpha_1+\alpha_2+2\alpha_3+2\alpha_4,~\alpha_1+\alpha_2,~\alpha_2+2\alpha_3+\alpha_4,~\alpha_2+2\alpha_3,~\alpha_2$
\item $\alpha_1+\alpha_2+2\alpha_3+2\alpha_4,~\alpha_1+\alpha_2,~\alpha_2+2\alpha_3+\alpha_4,~\alpha_2+\alpha_3$
\item $\alpha_1+\alpha_2+2\alpha_3+2\alpha_4,~\alpha_1+\alpha_2,~\alpha_2+2\alpha_3+\alpha_4,~\alpha_2+\alpha_3,~\alpha_2$
\item $\alpha_1+\alpha_2+2\alpha_3+2\alpha_4,~\alpha_1+\alpha_2,~\alpha_2+2\alpha_3+\alpha_4,~\alpha_2$
\item $\alpha_1+\alpha_2+2\alpha_3+2\alpha_4,~\alpha_1+\alpha_2,~\alpha_2+2\alpha_3$
\item $\alpha_1+\alpha_2+2\alpha_3+2\alpha_4,~\alpha_1+\alpha_2,~\alpha_2+2\alpha_3,~\alpha_2+\alpha_3+\alpha_4$
\item $\alpha_1+\alpha_2+2\alpha_3+2\alpha_4,~\alpha_1+\alpha_2,~\alpha_2+2\alpha_3,~\alpha_2+\alpha_3+\alpha_4,~\alpha_2$
\item $\alpha_1+\alpha_2+2\alpha_3+2\alpha_4,~\alpha_1+\alpha_2,~\alpha_2+2\alpha_3,~\alpha_2$
\item $\alpha_1+\alpha_2+2\alpha_3+2\alpha_4,~\alpha_1+\alpha_2,~\alpha_2+\alpha_3+\alpha_4$
\item $\alpha_1+\alpha_2+2\alpha_3+2\alpha_4,~\alpha_1+\alpha_2,~\alpha_2+\alpha_3+\alpha_4,~\alpha_2+\alpha_3$
\item $\alpha_1+\alpha_2+2\alpha_3+2\alpha_4,~\alpha_1+\alpha_2,~\alpha_2+\alpha_3+\alpha_4,~\alpha_2$
\item $\alpha_1+\alpha_2+2\alpha_3+2\alpha_4,~\alpha_1+\alpha_2,~\alpha_2+\alpha_3+\alpha_4,~\alpha_2,~\alpha_3$
\item $\alpha_1+\alpha_2+2\alpha_3+2\alpha_4,~\alpha_1+\alpha_2,~\alpha_2+\alpha_3+\alpha_4,~\alpha_3$
\item $\alpha_1+\alpha_2+2\alpha_3+2\alpha_4,~\alpha_1+\alpha_2,~\alpha_2+\alpha_3$
\item $\alpha_1+\alpha_2+2\alpha_3+2\alpha_4,~\alpha_1+\alpha_2,~\alpha_2$
\item $\alpha_1+\alpha_2+2\alpha_3+2\alpha_4,~\alpha_1+\alpha_2,~\alpha_2,~\alpha_3$
\item $\alpha_1+\alpha_2+2\alpha_3+2\alpha_4,~\alpha_1+\alpha_2,~\alpha_3$
\item $\alpha_1+\alpha_2+2\alpha_3+2\alpha_4,~\alpha_2+2\alpha_3+\alpha_4$
\item $\alpha_1+\alpha_2+2\alpha_3+2\alpha_4,~\alpha_2+2\alpha_3+\alpha_4,~\alpha_2+2\alpha_3$
\item $\alpha_1+\alpha_2+2\alpha_3+2\alpha_4,~\alpha_2+2\alpha_3+\alpha_4,~\alpha_2+2\alpha_3,~\alpha_2+\alpha_3$
\item $\alpha_1+\alpha_2+2\alpha_3+2\alpha_4,~\alpha_2+2\alpha_3+\alpha_4,~\alpha_2+2\alpha_3,~\alpha_2+\alpha_3,~\alpha_2$
\item $\alpha_1+\alpha_2+2\alpha_3+2\alpha_4,~\alpha_2+2\alpha_3+\alpha_4,~\alpha_2+2\alpha_3,~\alpha_2$
\item $\alpha_1+\alpha_2+2\alpha_3+2\alpha_4,~\alpha_2+2\alpha_3+\alpha_4,~\alpha_2+\alpha_3$
\item $\alpha_1+\alpha_2+2\alpha_3+2\alpha_4,~\alpha_2+2\alpha_3+\alpha_4,~\alpha_2+\alpha_3,~\alpha_2$
\item $\alpha_1+\alpha_2+2\alpha_3+2\alpha_4,~\alpha_2+2\alpha_3+\alpha_4,~\alpha_2$
\item $\alpha_1+\alpha_2+2\alpha_3+2\alpha_4,~\alpha_2+2\alpha_3$
\item $\alpha_1+\alpha_2+2\alpha_3+2\alpha_4,~\alpha_2+2\alpha_3,~\alpha_2+\alpha_3+\alpha_4$
\item $\alpha_1+\alpha_2+2\alpha_3+2\alpha_4,~\alpha_2+2\alpha_3,~\alpha_2+\alpha_3+\alpha_4,~\alpha_2$
\item $\alpha_1+\alpha_2+2\alpha_3+2\alpha_4,~\alpha_2+2\alpha_3,~\alpha_2$
\item $\alpha_1+\alpha_2+2\alpha_3+2\alpha_4,~\alpha_2+\alpha_3+\alpha_4$
\item $\alpha_1+\alpha_2+2\alpha_3+2\alpha_4,~\alpha_2+\alpha_3+\alpha_4,~\alpha_2+\alpha_3$
\item $\alpha_1+\alpha_2+2\alpha_3+2\alpha_4,~\alpha_2+\alpha_3+\alpha_4,~\alpha_2$
\item $\alpha_1+\alpha_2+2\alpha_3+2\alpha_4,~\alpha_2+\alpha_3+\alpha_4,~\alpha_2,~\alpha_3$
\item $\alpha_1+\alpha_2+2\alpha_3+2\alpha_4,~\alpha_2+\alpha_3+\alpha_4,~\alpha_3$
\item $\alpha_1+\alpha_2+2\alpha_3+2\alpha_4,~\alpha_2+\alpha_3$
\item $\alpha_1+\alpha_2+2\alpha_3+2\alpha_4,~\alpha_2$
\item $\alpha_1+\alpha_2+2\alpha_3+2\alpha_4,~\alpha_2,~\alpha_3$
\item $\alpha_1+\alpha_2+2\alpha_3+2\alpha_4,~\alpha_3$
\item $\alpha_1+2\alpha_2+2\alpha_3$
\item $\alpha_1+2\alpha_2+2\alpha_3,~\alpha_1+\alpha_2+2\alpha_3+2\alpha_4$
\item $\alpha_1+2\alpha_2+2\alpha_3,~\alpha_1+\alpha_2+2\alpha_3+2\alpha_4,~\alpha_2+2\alpha_3+\alpha_4$
\item $\alpha_1+2\alpha_2+2\alpha_3,~\alpha_1+\alpha_2+2\alpha_3+2\alpha_4,~\alpha_2+\alpha_3+\alpha_4$
\item $\alpha_1+2\alpha_2+2\alpha_3,~\alpha_1+\alpha_2+2\alpha_3+2\alpha_4,~\alpha_2+\alpha_3+\alpha_4,~\alpha_3$
\item $\alpha_1+2\alpha_2+2\alpha_3,~\alpha_1+\alpha_2+2\alpha_3+2\alpha_4,~\alpha_3$
\item $\alpha_1+2\alpha_2+2\alpha_3,~\alpha_1+\alpha_2+2\alpha_3+\alpha_4$
\item $\alpha_1+2\alpha_2+2\alpha_3,~\alpha_1+\alpha_2+2\alpha_3+\alpha_4,~\alpha_2+2\alpha_3+2\alpha_4$
\item $\alpha_1+2\alpha_2+2\alpha_3,~\alpha_1+\alpha_2+2\alpha_3+\alpha_4,~\alpha_2+2\alpha_3+2\alpha_4,~\alpha_4$
\item $\alpha_1+2\alpha_2+2\alpha_3,~\alpha_1+\alpha_2+2\alpha_3+\alpha_4,~\alpha_2+\alpha_3+\alpha_4$
\item $\alpha_1+2\alpha_2+2\alpha_3,~\alpha_1+\alpha_2+2\alpha_3+\alpha_4,~\alpha_3+\alpha_4$
\item $\alpha_1+2\alpha_2+2\alpha_3,~\alpha_1+\alpha_2+2\alpha_3+\alpha_4,~\alpha_3+\alpha_4,~\alpha_4$
\item $\alpha_1+2\alpha_2+2\alpha_3,~\alpha_1+\alpha_2+2\alpha_3+\alpha_4,~\alpha_4$
\item $\alpha_1+2\alpha_2+2\alpha_3,~\alpha_1+\alpha_2+\alpha_3+\alpha_4$
\item $\alpha_1+2\alpha_2+2\alpha_3,~\alpha_1+\alpha_2+\alpha_3+\alpha_4,~\alpha_2+2\alpha_3+2\alpha_4$
\item $\alpha_1+2\alpha_2+2\alpha_3,~\alpha_1+\alpha_2+\alpha_3+\alpha_4,~\alpha_2+2\alpha_3+2\alpha_4,~\alpha_3+\alpha_4$
\item $\alpha_1+2\alpha_2+2\alpha_3,~\alpha_1+\alpha_2+\alpha_3+\alpha_4,~\alpha_2+2\alpha_3+2\alpha_4,~\alpha_3+\alpha_4,~\alpha_3$
\item $\alpha_1+2\alpha_2+2\alpha_3,~\alpha_1+\alpha_2+\alpha_3+\alpha_4,~\alpha_2+2\alpha_3+2\alpha_4,~\alpha_3$
\item $\alpha_1+2\alpha_2+2\alpha_3,~\alpha_1+\alpha_2+\alpha_3+\alpha_4,~\alpha_2+2\alpha_3+\alpha_4$
\item $\alpha_1+2\alpha_2+2\alpha_3,~\alpha_1+\alpha_2+\alpha_3+\alpha_4,~\alpha_2+2\alpha_3+\alpha_4,~\alpha_3$
\item $\alpha_1+2\alpha_2+2\alpha_3,~\alpha_1+\alpha_2+\alpha_3+\alpha_4,~\alpha_3+\alpha_4$
\item $\alpha_1+2\alpha_2+2\alpha_3,~\alpha_1+\alpha_2+\alpha_3+\alpha_4,~\alpha_3$
\item $\alpha_1+2\alpha_2+2\alpha_3,~\alpha_1+\alpha_2+\alpha_3+\alpha_4,~\alpha_3,~\alpha_4$
\item $\alpha_1+2\alpha_2+2\alpha_3,~\alpha_1+\alpha_2+\alpha_3+\alpha_4,~\alpha_4$
\item $\alpha_1+2\alpha_2+2\alpha_3,~\alpha_1$
\item $\alpha_1+2\alpha_2+2\alpha_3,~\alpha_1,~\alpha_2+2\alpha_3+2\alpha_4$
\item $\alpha_1+2\alpha_2+2\alpha_3,~\alpha_1,~\alpha_2+2\alpha_3+2\alpha_4,~\alpha_3$
\item $\alpha_1+2\alpha_2+2\alpha_3,~\alpha_1,~\alpha_2+2\alpha_3+\alpha_4$
\item $\alpha_1+2\alpha_2+2\alpha_3,~\alpha_1,~\alpha_2+2\alpha_3+\alpha_4,~\alpha_3+\alpha_4$
\item $\alpha_1+2\alpha_2+2\alpha_3,~\alpha_1,~\alpha_2+2\alpha_3+\alpha_4,~\alpha_3+\alpha_4,~\alpha_4$
\item $\alpha_1+2\alpha_2+2\alpha_3,~\alpha_1,~\alpha_2+2\alpha_3+\alpha_4,~\alpha_4$
\item $\alpha_1+2\alpha_2+2\alpha_3,~\alpha_1,~\alpha_2+\alpha_3+\alpha_4$
\item $\alpha_1+2\alpha_2+2\alpha_3,~\alpha_1,~\alpha_2+\alpha_3+\alpha_4,~\alpha_3+\alpha_4$
\item $\alpha_1+2\alpha_2+2\alpha_3,~\alpha_1,~\alpha_2+\alpha_3+\alpha_4,~\alpha_3$
\item $\alpha_1+2\alpha_2+2\alpha_3,~\alpha_1,~\alpha_2+\alpha_3+\alpha_4,~\alpha_3,~\alpha_4$
\item $\alpha_1+2\alpha_2+2\alpha_3,~\alpha_1,~\alpha_2+\alpha_3+\alpha_4,~\alpha_4$
\item $\alpha_1+2\alpha_2+2\alpha_3,~\alpha_1,~\alpha_3+\alpha_4$
\item $\alpha_1+2\alpha_2+2\alpha_3,~\alpha_1,~\alpha_3$
\item $\alpha_1+2\alpha_2+2\alpha_3,~\alpha_1,~\alpha_3,~\alpha_4$
\item $\alpha_1+2\alpha_2+2\alpha_3,~\alpha_1,~\alpha_4$
\item $\alpha_1+2\alpha_2+2\alpha_3,~\alpha_2+2\alpha_3+2\alpha_4$
\item $\alpha_1+2\alpha_2+2\alpha_3,~\alpha_2+2\alpha_3+2\alpha_4,~\alpha_3$
\item $\alpha_1+2\alpha_2+2\alpha_3,~\alpha_2+2\alpha_3+\alpha_4$
\item $\alpha_1+2\alpha_2+2\alpha_3,~\alpha_2+2\alpha_3+\alpha_4,~\alpha_3+\alpha_4$
\item $\alpha_1+2\alpha_2+2\alpha_3,~\alpha_2+2\alpha_3+\alpha_4,~\alpha_3+\alpha_4,~\alpha_4$
\item $\alpha_1+2\alpha_2+2\alpha_3,~\alpha_2+2\alpha_3+\alpha_4,~\alpha_4$
\item $\alpha_1+2\alpha_2+2\alpha_3,~\alpha_2+\alpha_3+\alpha_4$
\item $\alpha_1+2\alpha_2+2\alpha_3,~\alpha_2+\alpha_3+\alpha_4,~\alpha_3+\alpha_4$
\item $\alpha_1+2\alpha_2+2\alpha_3,~\alpha_2+\alpha_3+\alpha_4,~\alpha_3$
\item $\alpha_1+2\alpha_2+2\alpha_3,~\alpha_2+\alpha_3+\alpha_4,~\alpha_3,~\alpha_4$
\item $\alpha_1+2\alpha_2+2\alpha_3,~\alpha_2+\alpha_3+\alpha_4,~\alpha_4$
\item $\alpha_1+2\alpha_2+2\alpha_3,~\alpha_3+\alpha_4$
\item $\alpha_1+2\alpha_2+2\alpha_3,~\alpha_3$
\item $\alpha_1+2\alpha_2+2\alpha_3,~\alpha_3,~\alpha_4$
\item $\alpha_1+2\alpha_2+2\alpha_3,~\alpha_4$
\item $\alpha_1+2\alpha_2+2\alpha_3+\alpha_4$
\item $\alpha_1+2\alpha_2+2\alpha_3+\alpha_4,~\alpha_1+\alpha_2+2\alpha_3+2\alpha_4$
\item $\alpha_1+2\alpha_2+2\alpha_3+\alpha_4,~\alpha_1+\alpha_2+2\alpha_3+2\alpha_4,~\alpha_1+\alpha_2+2\alpha_3$
\item $\alpha_1+2\alpha_2+2\alpha_3+\alpha_4,~\alpha_1+\alpha_2+2\alpha_3+2\alpha_4,~\alpha_1+\alpha_2+2\alpha_3,~\alpha_2+2\alpha_3$
\item $\alpha_1+2\alpha_2+2\alpha_3+\alpha_4,~\alpha_1+\alpha_2+2\alpha_3+2\alpha_4,~\alpha_1+\alpha_2+2\alpha_3,~\alpha_3+\alpha_4$
\item $\alpha_1+2\alpha_2+2\alpha_3+\alpha_4,~\alpha_1+\alpha_2+2\alpha_3+2\alpha_4,~\alpha_1+\alpha_2+2\alpha_3,~\alpha_3+\alpha_4,~\alpha_3$
\item $\alpha_1+2\alpha_2+2\alpha_3+\alpha_4,~\alpha_1+\alpha_2+2\alpha_3+2\alpha_4,~\alpha_1+\alpha_2+2\alpha_3,~\alpha_3$
\item $\alpha_1+2\alpha_2+2\alpha_3+\alpha_4,~\alpha_1+\alpha_2+2\alpha_3+2\alpha_4,~\alpha_2+2\alpha_3$
\item $\alpha_1+2\alpha_2+2\alpha_3+\alpha_4,~\alpha_1+\alpha_2+2\alpha_3+2\alpha_4,~\alpha_3+\alpha_4$
\item $\alpha_1+2\alpha_2+2\alpha_3+\alpha_4,~\alpha_1+\alpha_2+2\alpha_3+2\alpha_4,~\alpha_3+\alpha_4,~\alpha_3$
\item $\alpha_1+2\alpha_2+2\alpha_3+\alpha_4,~\alpha_1+\alpha_2+2\alpha_3+2\alpha_4,~\alpha_3$
\item $\alpha_1+2\alpha_2+2\alpha_3+\alpha_4,~\alpha_1+\alpha_2+2\alpha_3$
\item $\alpha_1+2\alpha_2+2\alpha_3+\alpha_4,~\alpha_1+\alpha_2+2\alpha_3,~\alpha_2+2\alpha_3+2\alpha_4$
\item $\alpha_1+2\alpha_2+2\alpha_3+\alpha_4,~\alpha_1+\alpha_2+2\alpha_3,~\alpha_3+\alpha_4$
\item $\alpha_1+2\alpha_2+2\alpha_3+\alpha_4,~\alpha_1+\alpha_2+2\alpha_3,~\alpha_3+\alpha_4,~\alpha_3$
\item $\alpha_1+2\alpha_2+2\alpha_3+\alpha_4,~\alpha_1+\alpha_2+2\alpha_3,~\alpha_3$
\item $\alpha_1+2\alpha_2+2\alpha_3+\alpha_4,~\alpha_1$
\item $\alpha_1+2\alpha_2+2\alpha_3+\alpha_4,~\alpha_1,~\alpha_2+2\alpha_3+2\alpha_4$
\item $\alpha_1+2\alpha_2+2\alpha_3+\alpha_4,~\alpha_1,~\alpha_2+2\alpha_3+2\alpha_4,~\alpha_2+2\alpha_3$
\item $\alpha_1+2\alpha_2+2\alpha_3+\alpha_4,~\alpha_1,~\alpha_2+2\alpha_3+2\alpha_4,~\alpha_2+2\alpha_3,~\alpha_3+\alpha_4$
\item $\alpha_1+2\alpha_2+2\alpha_3+\alpha_4,~\alpha_1,~\alpha_2+2\alpha_3+2\alpha_4,~\alpha_2+2\alpha_3,~\alpha_3+\alpha_4,~\alpha_3$
\item $\alpha_1+2\alpha_2+2\alpha_3+\alpha_4,~\alpha_1,~\alpha_2+2\alpha_3+2\alpha_4,~\alpha_2+2\alpha_3,~\alpha_3$
\item $\alpha_1+2\alpha_2+2\alpha_3+\alpha_4,~\alpha_1,~\alpha_2+2\alpha_3+2\alpha_4,~\alpha_3+\alpha_4$
\item $\alpha_1+2\alpha_2+2\alpha_3+\alpha_4,~\alpha_1,~\alpha_2+2\alpha_3+2\alpha_4,~\alpha_3+\alpha_4,~\alpha_3$
\item $\alpha_1+2\alpha_2+2\alpha_3+\alpha_4,~\alpha_1,~\alpha_2+2\alpha_3+2\alpha_4,~\alpha_3$
\item $\alpha_1+2\alpha_2+2\alpha_3+\alpha_4,~\alpha_1,~\alpha_2+2\alpha_3$
\item $\alpha_1+2\alpha_2+2\alpha_3+\alpha_4,~\alpha_1,~\alpha_2+2\alpha_3,~\alpha_3+\alpha_4$
\item $\alpha_1+2\alpha_2+2\alpha_3+\alpha_4,~\alpha_1,~\alpha_2+2\alpha_3,~\alpha_3+\alpha_4,~\alpha_3$
\item $\alpha_1+2\alpha_2+2\alpha_3+\alpha_4,~\alpha_1,~\alpha_2+2\alpha_3,~\alpha_3$
\item $\alpha_1+2\alpha_2+2\alpha_3+\alpha_4,~\alpha_1,~\alpha_3+\alpha_4$
\item $\alpha_1+2\alpha_2+2\alpha_3+\alpha_4,~\alpha_1,~\alpha_3+\alpha_4,~\alpha_3$
\item $\alpha_1+2\alpha_2+2\alpha_3+\alpha_4,~\alpha_1,~\alpha_3$
\item $\alpha_1+2\alpha_2+2\alpha_3+\alpha_4,~\alpha_2+2\alpha_3+2\alpha_4$
\item $\alpha_1+2\alpha_2+2\alpha_3+\alpha_4,~\alpha_2+2\alpha_3+2\alpha_4,~\alpha_2+2\alpha_3$
\item $\alpha_1+2\alpha_2+2\alpha_3+\alpha_4,~\alpha_2+2\alpha_3+2\alpha_4,~\alpha_2+2\alpha_3,~\alpha_3+\alpha_4$
\item $\alpha_1+2\alpha_2+2\alpha_3+\alpha_4,~\alpha_2+2\alpha_3+2\alpha_4,~\alpha_2+2\alpha_3,~\alpha_3+\alpha_4,~\alpha_3$
\item $\alpha_1+2\alpha_2+2\alpha_3+\alpha_4,~\alpha_2+2\alpha_3+2\alpha_4,~\alpha_2+2\alpha_3,~\alpha_3$
\item $\alpha_1+2\alpha_2+2\alpha_3+\alpha_4,~\alpha_2+2\alpha_3+2\alpha_4,~\alpha_3+\alpha_4$
\item $\alpha_1+2\alpha_2+2\alpha_3+\alpha_4,~\alpha_2+2\alpha_3+2\alpha_4,~\alpha_3+\alpha_4,~\alpha_3$
\item $\alpha_1+2\alpha_2+2\alpha_3+\alpha_4,~\alpha_2+2\alpha_3+2\alpha_4,~\alpha_3$
\item $\alpha_1+2\alpha_2+2\alpha_3+\alpha_4,~\alpha_2+2\alpha_3$
\item $\alpha_1+2\alpha_2+2\alpha_3+\alpha_4,~\alpha_2+2\alpha_3,~\alpha_3+\alpha_4$
\item $\alpha_1+2\alpha_2+2\alpha_3+\alpha_4,~\alpha_2+2\alpha_3,~\alpha_3+\alpha_4,~\alpha_3$
\item $\alpha_1+2\alpha_2+2\alpha_3+\alpha_4,~\alpha_2+2\alpha_3,~\alpha_3$
\item $\alpha_1+2\alpha_2+2\alpha_3+\alpha_4,~\alpha_3+\alpha_4$
\item $\alpha_1+2\alpha_2+2\alpha_3+\alpha_4,~\alpha_3+\alpha_4,~\alpha_3$
\item $\alpha_1+2\alpha_2+2\alpha_3+\alpha_4,~\alpha_3$
\item $\alpha_1+2\alpha_2+2\alpha_3+2\alpha_4$
\item $\alpha_1+2\alpha_2+2\alpha_3+2\alpha_4,~\alpha_1+2\alpha_2+2\alpha_3$
\item $\alpha_1+2\alpha_2+2\alpha_3+2\alpha_4,~\alpha_1+2\alpha_2+2\alpha_3,~\alpha_1+\alpha_2+2\alpha_3+\alpha_4$
\item $\alpha_1+2\alpha_2+2\alpha_3+2\alpha_4,~\alpha_1+2\alpha_2+2\alpha_3,~\alpha_1+\alpha_2+2\alpha_3+\alpha_4,~\alpha_1+\alpha_2+2\alpha_3$
\item $\alpha_1+2\alpha_2+2\alpha_3+2\alpha_4,~\alpha_1+2\alpha_2+2\alpha_3,~\alpha_1+\alpha_2+2\alpha_3+\alpha_4,~\alpha_1+\alpha_2+2\alpha_3,~\alpha_2+2\alpha_3$
\item $\alpha_1+2\alpha_2+2\alpha_3+2\alpha_4,~\alpha_1+2\alpha_2+2\alpha_3,~\alpha_1+\alpha_2+2\alpha_3+\alpha_4,~\alpha_1+\alpha_2+2\alpha_3,~\alpha_3+\alpha_4$
\item $\alpha_1+2\alpha_2+2\alpha_3+2\alpha_4,~\alpha_1+2\alpha_2+2\alpha_3,~\alpha_1+\alpha_2+2\alpha_3+\alpha_4,~\alpha_1+\alpha_2+2\alpha_3,~\alpha_3+\alpha_4,~\alpha_3$
\item $\alpha_1+2\alpha_2+2\alpha_3+2\alpha_4,~\alpha_1+2\alpha_2+2\alpha_3,~\alpha_1+\alpha_2+2\alpha_3+\alpha_4,~\alpha_1+\alpha_2+2\alpha_3,~\alpha_3$
\item $\alpha_1+2\alpha_2+2\alpha_3+2\alpha_4,~\alpha_1+2\alpha_2+2\alpha_3,~\alpha_1+\alpha_2+2\alpha_3+\alpha_4,~\alpha_2+2\alpha_3$
\item $\alpha_1+2\alpha_2+2\alpha_3+2\alpha_4,~\alpha_1+2\alpha_2+2\alpha_3,~\alpha_1+\alpha_2+2\alpha_3+\alpha_4,~\alpha_3+\alpha_4$
\item $\alpha_1+2\alpha_2+2\alpha_3+2\alpha_4,~\alpha_1+2\alpha_2+2\alpha_3,~\alpha_1+\alpha_2+2\alpha_3+\alpha_4,~\alpha_3+\alpha_4,~\alpha_3$
\item $\alpha_1+2\alpha_2+2\alpha_3+2\alpha_4,~\alpha_1+2\alpha_2+2\alpha_3,~\alpha_1+\alpha_2+2\alpha_3+\alpha_4,~\alpha_3$
\item $\alpha_1+2\alpha_2+2\alpha_3+2\alpha_4,~\alpha_1+2\alpha_2+2\alpha_3,~\alpha_1+\alpha_2+2\alpha_3$
\item $\alpha_1+2\alpha_2+2\alpha_3+2\alpha_4,~\alpha_1+2\alpha_2+2\alpha_3,~\alpha_1+\alpha_2+2\alpha_3,~\alpha_2+2\alpha_3+\alpha_4$
\item $\alpha_1+2\alpha_2+2\alpha_3+2\alpha_4,~\alpha_1+2\alpha_2+2\alpha_3,~\alpha_1+\alpha_2+2\alpha_3,~\alpha_3+\alpha_4$
\item $\alpha_1+2\alpha_2+2\alpha_3+2\alpha_4,~\alpha_1+2\alpha_2+2\alpha_3,~\alpha_1+\alpha_2+2\alpha_3,~\alpha_3+\alpha_4,~\alpha_3$
\item $\alpha_1+2\alpha_2+2\alpha_3+2\alpha_4,~\alpha_1+2\alpha_2+2\alpha_3,~\alpha_1+\alpha_2+2\alpha_3,~\alpha_3$
\item $\alpha_1+2\alpha_2+2\alpha_3+2\alpha_4,~\alpha_1+2\alpha_2+2\alpha_3,~\alpha_1$
\item $\alpha_1+2\alpha_2+2\alpha_3+2\alpha_4,~\alpha_1+2\alpha_2+2\alpha_3,~\alpha_1,~\alpha_2+2\alpha_3+\alpha_4$
\item $\alpha_1+2\alpha_2+2\alpha_3+2\alpha_4,~\alpha_1+2\alpha_2+2\alpha_3,~\alpha_1,~\alpha_2+2\alpha_3+\alpha_4,~\alpha_2+2\alpha_3$
\item $\alpha_1+2\alpha_2+2\alpha_3+2\alpha_4,~\alpha_1+2\alpha_2+2\alpha_3,~\alpha_1,~\alpha_2+2\alpha_3+\alpha_4,~\alpha_2+2\alpha_3,~\alpha_3+\alpha_4$
\item $\alpha_1+2\alpha_2+2\alpha_3+2\alpha_4,~\alpha_1+2\alpha_2+2\alpha_3,~\alpha_1,~\alpha_2+2\alpha_3+\alpha_4,~\alpha_2+2\alpha_3,~\alpha_3+\alpha_4,~\alpha_3$
\item $\alpha_1+2\alpha_2+2\alpha_3+2\alpha_4,~\alpha_1+2\alpha_2+2\alpha_3,~\alpha_1,~\alpha_2+2\alpha_3+\alpha_4,~\alpha_2+2\alpha_3,~\alpha_3$
\item $\alpha_1+2\alpha_2+2\alpha_3+2\alpha_4,~\alpha_1+2\alpha_2+2\alpha_3,~\alpha_1,~\alpha_2+2\alpha_3+\alpha_4,~\alpha_3+\alpha_4$
\item $\alpha_1+2\alpha_2+2\alpha_3+2\alpha_4,~\alpha_1+2\alpha_2+2\alpha_3,~\alpha_1,~\alpha_2+2\alpha_3+\alpha_4,~\alpha_3+\alpha_4,~\alpha_3$
\item $\alpha_1+2\alpha_2+2\alpha_3+2\alpha_4,~\alpha_1+2\alpha_2+2\alpha_3,~\alpha_1,~\alpha_2+2\alpha_3+\alpha_4,~\alpha_3$
\item $\alpha_1+2\alpha_2+2\alpha_3+2\alpha_4,~\alpha_1+2\alpha_2+2\alpha_3,~\alpha_1,~\alpha_2+2\alpha_3$
\item $\alpha_1+2\alpha_2+2\alpha_3+2\alpha_4,~\alpha_1+2\alpha_2+2\alpha_3,~\alpha_1,~\alpha_2+2\alpha_3,~\alpha_3+\alpha_4$
\item $\alpha_1+2\alpha_2+2\alpha_3+2\alpha_4,~\alpha_1+2\alpha_2+2\alpha_3,~\alpha_1,~\alpha_2+2\alpha_3,~\alpha_3+\alpha_4,~\alpha_3$
\item $\alpha_1+2\alpha_2+2\alpha_3+2\alpha_4,~\alpha_1+2\alpha_2+2\alpha_3,~\alpha_1,~\alpha_2+2\alpha_3,~\alpha_3$
\item $\alpha_1+2\alpha_2+2\alpha_3+2\alpha_4,~\alpha_1+2\alpha_2+2\alpha_3,~\alpha_1,~\alpha_3+\alpha_4$
\item $\alpha_1+2\alpha_2+2\alpha_3+2\alpha_4,~\alpha_1+2\alpha_2+2\alpha_3,~\alpha_1,~\alpha_3+\alpha_4,~\alpha_3$
\item $\alpha_1+2\alpha_2+2\alpha_3+2\alpha_4,~\alpha_1+2\alpha_2+2\alpha_3,~\alpha_1,~\alpha_3$
\item $\alpha_1+2\alpha_2+2\alpha_3+2\alpha_4,~\alpha_1+2\alpha_2+2\alpha_3,~\alpha_2+2\alpha_3+\alpha_4$
\item $\alpha_1+2\alpha_2+2\alpha_3+2\alpha_4,~\alpha_1+2\alpha_2+2\alpha_3,~\alpha_2+2\alpha_3+\alpha_4,~\alpha_2+2\alpha_3$
\item $\alpha_1+2\alpha_2+2\alpha_3+2\alpha_4,~\alpha_1+2\alpha_2+2\alpha_3,~\alpha_2+2\alpha_3+\alpha_4,~\alpha_2+2\alpha_3,~\alpha_3+\alpha_4$
\item $\alpha_1+2\alpha_2+2\alpha_3+2\alpha_4,~\alpha_1+2\alpha_2+2\alpha_3,~\alpha_2+2\alpha_3+\alpha_4,~\alpha_2+2\alpha_3,~\alpha_3+\alpha_4,~\alpha_3$
\item $\alpha_1+2\alpha_2+2\alpha_3+2\alpha_4,~\alpha_1+2\alpha_2+2\alpha_3,~\alpha_2+2\alpha_3+\alpha_4,~\alpha_2+2\alpha_3,~\alpha_3$
\item $\alpha_1+2\alpha_2+2\alpha_3+2\alpha_4,~\alpha_1+2\alpha_2+2\alpha_3,~\alpha_2+2\alpha_3+\alpha_4,~\alpha_3+\alpha_4$
\item $\alpha_1+2\alpha_2+2\alpha_3+2\alpha_4,~\alpha_1+2\alpha_2+2\alpha_3,~\alpha_2+2\alpha_3+\alpha_4,~\alpha_3+\alpha_4,~\alpha_3$
\item $\alpha_1+2\alpha_2+2\alpha_3+2\alpha_4,~\alpha_1+2\alpha_2+2\alpha_3,~\alpha_2+2\alpha_3+\alpha_4,~\alpha_3$
\item $\alpha_1+2\alpha_2+2\alpha_3+2\alpha_4,~\alpha_1+2\alpha_2+2\alpha_3,~\alpha_2+2\alpha_3$
\item $\alpha_1+2\alpha_2+2\alpha_3+2\alpha_4,~\alpha_1+2\alpha_2+2\alpha_3,~\alpha_2+2\alpha_3,~\alpha_3+\alpha_4$
\item $\alpha_1+2\alpha_2+2\alpha_3+2\alpha_4,~\alpha_1+2\alpha_2+2\alpha_3,~\alpha_2+2\alpha_3,~\alpha_3+\alpha_4,~\alpha_3$
\item $\alpha_1+2\alpha_2+2\alpha_3+2\alpha_4,~\alpha_1+2\alpha_2+2\alpha_3,~\alpha_2+2\alpha_3,~\alpha_3$
\item $\alpha_1+2\alpha_2+2\alpha_3+2\alpha_4,~\alpha_1+2\alpha_2+2\alpha_3,~\alpha_3+\alpha_4$
\item $\alpha_1+2\alpha_2+2\alpha_3+2\alpha_4,~\alpha_1+2\alpha_2+2\alpha_3,~\alpha_3+\alpha_4,~\alpha_3$
\item $\alpha_1+2\alpha_2+2\alpha_3+2\alpha_4,~\alpha_1+2\alpha_2+2\alpha_3,~\alpha_3$
\item $\alpha_1+2\alpha_2+2\alpha_3+2\alpha_4,~\alpha_1+\alpha_2+2\alpha_3+\alpha_4$
\item $\alpha_1+2\alpha_2+2\alpha_3+2\alpha_4,~\alpha_1+\alpha_2+2\alpha_3+\alpha_4,~\alpha_1+\alpha_2+2\alpha_3$
\item $\alpha_1+2\alpha_2+2\alpha_3+2\alpha_4,~\alpha_1+\alpha_2+2\alpha_3+\alpha_4,~\alpha_1+\alpha_2+2\alpha_3,~\alpha_2+2\alpha_3$
\item $\alpha_1+2\alpha_2+2\alpha_3+2\alpha_4,~\alpha_1+\alpha_2+2\alpha_3+\alpha_4,~\alpha_1+\alpha_2+2\alpha_3,~\alpha_2+\alpha_3$
\item $\alpha_1+2\alpha_2+2\alpha_3+2\alpha_4,~\alpha_1+\alpha_2+2\alpha_3+\alpha_4,~\alpha_1+\alpha_2+2\alpha_3,~\alpha_2+\alpha_3,~\alpha_3$
\item $\alpha_1+2\alpha_2+2\alpha_3+2\alpha_4,~\alpha_1+\alpha_2+2\alpha_3+\alpha_4,~\alpha_1+\alpha_2+2\alpha_3,~\alpha_3$
\item $\alpha_1+2\alpha_2+2\alpha_3+2\alpha_4,~\alpha_1+\alpha_2+2\alpha_3+\alpha_4,~\alpha_2+2\alpha_3$
\item $\alpha_1+2\alpha_2+2\alpha_3+2\alpha_4,~\alpha_1+\alpha_2+2\alpha_3+\alpha_4,~\alpha_2+\alpha_3$
\item $\alpha_1+2\alpha_2+2\alpha_3+2\alpha_4,~\alpha_1+\alpha_2+2\alpha_3+\alpha_4,~\alpha_2+\alpha_3,~\alpha_3$
\item $\alpha_1+2\alpha_2+2\alpha_3+2\alpha_4,~\alpha_1+\alpha_2+2\alpha_3+\alpha_4,~\alpha_3$
\item $\alpha_1+2\alpha_2+2\alpha_3+2\alpha_4,~\alpha_1+\alpha_2+2\alpha_3$
\item $\alpha_1+2\alpha_2+2\alpha_3+2\alpha_4,~\alpha_1+\alpha_2+2\alpha_3,~\alpha_2+2\alpha_3+\alpha_4$
\item $\alpha_1+2\alpha_2+2\alpha_3+2\alpha_4,~\alpha_1+\alpha_2+2\alpha_3,~\alpha_2+\alpha_3$
\item $\alpha_1+2\alpha_2+2\alpha_3+2\alpha_4,~\alpha_1+\alpha_2+2\alpha_3,~\alpha_2+\alpha_3,~\alpha_3+\alpha_4$
\item $\alpha_1+2\alpha_2+2\alpha_3+2\alpha_4,~\alpha_1+\alpha_2+2\alpha_3,~\alpha_3+\alpha_4$
\item $\alpha_1+2\alpha_2+2\alpha_3+2\alpha_4,~\alpha_1+\alpha_2+\alpha_3$
\item $\alpha_1+2\alpha_2+2\alpha_3+2\alpha_4,~\alpha_1+\alpha_2+\alpha_3,~\alpha_2+2\alpha_3+\alpha_4$
\item $\alpha_1+2\alpha_2+2\alpha_3+2\alpha_4,~\alpha_1+\alpha_2+\alpha_3,~\alpha_2+2\alpha_3+\alpha_4,~\alpha_2+2\alpha_3$
\item $\alpha_1+2\alpha_2+2\alpha_3+2\alpha_4,~\alpha_1+\alpha_2+\alpha_3,~\alpha_2+2\alpha_3+\alpha_4,~\alpha_2+2\alpha_3,~\alpha_3+\alpha_4$
\item $\alpha_1+2\alpha_2+2\alpha_3+2\alpha_4,~\alpha_1+\alpha_2+\alpha_3,~\alpha_2+2\alpha_3+\alpha_4,~\alpha_2+2\alpha_3,~\alpha_3+\alpha_4,~\alpha_3$
\item $\alpha_1+2\alpha_2+2\alpha_3+2\alpha_4,~\alpha_1+\alpha_2+\alpha_3,~\alpha_2+2\alpha_3+\alpha_4,~\alpha_2+2\alpha_3,~\alpha_3$
\item $\alpha_1+2\alpha_2+2\alpha_3+2\alpha_4,~\alpha_1+\alpha_2+\alpha_3,~\alpha_2+2\alpha_3+\alpha_4,~\alpha_3+\alpha_4$
\item $\alpha_1+2\alpha_2+2\alpha_3+2\alpha_4,~\alpha_1+\alpha_2+\alpha_3,~\alpha_2+2\alpha_3+\alpha_4,~\alpha_3+\alpha_4,~\alpha_3$
\item $\alpha_1+2\alpha_2+2\alpha_3+2\alpha_4,~\alpha_1+\alpha_2+\alpha_3,~\alpha_2+2\alpha_3+\alpha_4,~\alpha_3$
\item $\alpha_1+2\alpha_2+2\alpha_3+2\alpha_4,~\alpha_1+\alpha_2+\alpha_3,~\alpha_2+2\alpha_3$
\item $\alpha_1+2\alpha_2+2\alpha_3+2\alpha_4,~\alpha_1+\alpha_2+\alpha_3,~\alpha_2+2\alpha_3,~\alpha_3+\alpha_4$
\item $\alpha_1+2\alpha_2+2\alpha_3+2\alpha_4,~\alpha_1+\alpha_2+\alpha_3,~\alpha_2+2\alpha_3,~\alpha_3+\alpha_4,~\alpha_3$
\item $\alpha_1+2\alpha_2+2\alpha_3+2\alpha_4,~\alpha_1+\alpha_2+\alpha_3,~\alpha_2+2\alpha_3,~\alpha_3$
\item $\alpha_1+2\alpha_2+2\alpha_3+2\alpha_4,~\alpha_1+\alpha_2+\alpha_3,~\alpha_3+\alpha_4$
\item $\alpha_1+2\alpha_2+2\alpha_3+2\alpha_4,~\alpha_1+\alpha_2+\alpha_3,~\alpha_3+\alpha_4,~\alpha_3$
\item $\alpha_1+2\alpha_2+2\alpha_3+2\alpha_4,~\alpha_1+\alpha_2+\alpha_3,~\alpha_3$
\item $\alpha_1+2\alpha_2+2\alpha_3+2\alpha_4,~\alpha_1$
\item $\alpha_1+2\alpha_2+2\alpha_3+2\alpha_4,~\alpha_1,~\alpha_2+2\alpha_3+\alpha_4$
\item $\alpha_1+2\alpha_2+2\alpha_3+2\alpha_4,~\alpha_1,~\alpha_2+2\alpha_3+\alpha_4,~\alpha_2+2\alpha_3$
\item $\alpha_1+2\alpha_2+2\alpha_3+2\alpha_4,~\alpha_1,~\alpha_2+2\alpha_3+\alpha_4,~\alpha_2+2\alpha_3,~\alpha_3$
\item $\alpha_1+2\alpha_2+2\alpha_3+2\alpha_4,~\alpha_1,~\alpha_2+2\alpha_3+\alpha_4,~\alpha_3$
\item $\alpha_1+2\alpha_2+2\alpha_3+2\alpha_4,~\alpha_1,~\alpha_2+2\alpha_3$
\item $\alpha_1+2\alpha_2+2\alpha_3+2\alpha_4,~\alpha_1,~\alpha_2+2\alpha_3,~\alpha_3+\alpha_4$
\item $\alpha_1+2\alpha_2+2\alpha_3+2\alpha_4,~\alpha_1,~\alpha_2+\alpha_3$
\item $\alpha_1+2\alpha_2+2\alpha_3+2\alpha_4,~\alpha_1,~\alpha_2+\alpha_3,~\alpha_3+\alpha_4$
\item $\alpha_1+2\alpha_2+2\alpha_3+2\alpha_4,~\alpha_1,~\alpha_2+\alpha_3,~\alpha_3+\alpha_4,~\alpha_3$
\item $\alpha_1+2\alpha_2+2\alpha_3+2\alpha_4,~\alpha_1,~\alpha_2+\alpha_3,~\alpha_3$
\item $\alpha_1+2\alpha_2+2\alpha_3+2\alpha_4,~\alpha_1,~\alpha_3+\alpha_4$
\item $\alpha_1+2\alpha_2+2\alpha_3+2\alpha_4,~\alpha_1,~\alpha_3+\alpha_4,~\alpha_3$
\item $\alpha_1+2\alpha_2+2\alpha_3+2\alpha_4,~\alpha_1,~\alpha_3$
\item $\alpha_1+2\alpha_2+2\alpha_3+2\alpha_4,~\alpha_2+2\alpha_3+\alpha_4$
\item $\alpha_1+2\alpha_2+2\alpha_3+2\alpha_4,~\alpha_2+2\alpha_3+\alpha_4,~\alpha_2+2\alpha_3$
\item $\alpha_1+2\alpha_2+2\alpha_3+2\alpha_4,~\alpha_2+2\alpha_3+\alpha_4,~\alpha_2+2\alpha_3,~\alpha_3$
\item $\alpha_1+2\alpha_2+2\alpha_3+2\alpha_4,~\alpha_2+2\alpha_3+\alpha_4,~\alpha_3$
\item $\alpha_1+2\alpha_2+2\alpha_3+2\alpha_4,~\alpha_2+2\alpha_3$
\item $\alpha_1+2\alpha_2+2\alpha_3+2\alpha_4,~\alpha_2+2\alpha_3,~\alpha_3+\alpha_4$
\item $\alpha_1+2\alpha_2+2\alpha_3+2\alpha_4,~\alpha_2+\alpha_3$
\item $\alpha_1+2\alpha_2+2\alpha_3+2\alpha_4,~\alpha_2+\alpha_3,~\alpha_3+\alpha_4$
\item $\alpha_1+2\alpha_2+2\alpha_3+2\alpha_4,~\alpha_2+\alpha_3,~\alpha_3+\alpha_4,~\alpha_3$
\item $\alpha_1+2\alpha_2+2\alpha_3+2\alpha_4,~\alpha_2+\alpha_3,~\alpha_3$
\item $\alpha_1+2\alpha_2+2\alpha_3+2\alpha_4,~\alpha_3+\alpha_4$
\item $\alpha_1+2\alpha_2+2\alpha_3+2\alpha_4,~\alpha_3+\alpha_4,~\alpha_3$
\item $\alpha_1+2\alpha_2+2\alpha_3+2\alpha_4,~\alpha_3$
\item $\alpha_1+2\alpha_2+3\alpha_3+\alpha_4$
\item $\alpha_1+2\alpha_2+3\alpha_3+\alpha_4,~\alpha_1+2\alpha_2+2\alpha_3+2\alpha_4$
\item $\alpha_1+2\alpha_2+3\alpha_3+\alpha_4,~\alpha_1+2\alpha_2+2\alpha_3+2\alpha_4,~\alpha_1$
\item $\alpha_1+2\alpha_2+3\alpha_3+\alpha_4,~\alpha_1+2\alpha_2+2\alpha_3+2\alpha_4,~\alpha_1,~\alpha_4$
\item $\alpha_1+2\alpha_2+3\alpha_3+\alpha_4,~\alpha_1+2\alpha_2+2\alpha_3+2\alpha_4,~\alpha_4$
\item $\alpha_1+2\alpha_2+3\alpha_3+\alpha_4,~\alpha_1+\alpha_2+2\alpha_3+2\alpha_4$
\item $\alpha_1+2\alpha_2+3\alpha_3+\alpha_4,~\alpha_1+\alpha_2+2\alpha_3+2\alpha_4,~\alpha_1+\alpha_2$
\item $\alpha_1+2\alpha_2+3\alpha_3+\alpha_4,~\alpha_1+\alpha_2+2\alpha_3+2\alpha_4,~\alpha_1+\alpha_2,~\alpha_2$
\item $\alpha_1+2\alpha_2+3\alpha_3+\alpha_4,~\alpha_1+\alpha_2+2\alpha_3+2\alpha_4,~\alpha_1+\alpha_2,~\alpha_2,~\alpha_4$
\item $\alpha_1+2\alpha_2+3\alpha_3+\alpha_4,~\alpha_1+\alpha_2+2\alpha_3+2\alpha_4,~\alpha_1+\alpha_2,~\alpha_4$
\item $\alpha_1+2\alpha_2+3\alpha_3+\alpha_4,~\alpha_1+\alpha_2+2\alpha_3+2\alpha_4,~\alpha_2$
\item $\alpha_1+2\alpha_2+3\alpha_3+\alpha_4,~\alpha_1+\alpha_2+2\alpha_3+2\alpha_4,~\alpha_2,~\alpha_4$
\item $\alpha_1+2\alpha_2+3\alpha_3+\alpha_4,~\alpha_1+\alpha_2+2\alpha_3+2\alpha_4,~\alpha_4$
\item $\alpha_1+2\alpha_2+3\alpha_3+\alpha_4,~\alpha_1+\alpha_2$
\item $\alpha_1+2\alpha_2+3\alpha_3+\alpha_4,~\alpha_1+\alpha_2,~\alpha_2+2\alpha_3+2\alpha_4$
\item $\alpha_1+2\alpha_2+3\alpha_3+\alpha_4,~\alpha_1+\alpha_2,~\alpha_2+2\alpha_3+2\alpha_4,~\alpha_4$
\item $\alpha_1+2\alpha_2+3\alpha_3+\alpha_4,~\alpha_1+\alpha_2,~\alpha_4$
\item $\alpha_1+2\alpha_2+3\alpha_3+\alpha_4,~\alpha_1$
\item $\alpha_1+2\alpha_2+3\alpha_3+\alpha_4,~\alpha_1,~\alpha_2+2\alpha_3+2\alpha_4$
\item $\alpha_1+2\alpha_2+3\alpha_3+\alpha_4,~\alpha_1,~\alpha_2+2\alpha_3+2\alpha_4,~\alpha_2$
\item $\alpha_1+2\alpha_2+3\alpha_3+\alpha_4,~\alpha_1,~\alpha_2+2\alpha_3+2\alpha_4,~\alpha_2,~\alpha_4$
\item $\alpha_1+2\alpha_2+3\alpha_3+\alpha_4,~\alpha_1,~\alpha_2+2\alpha_3+2\alpha_4,~\alpha_4$
\item $\alpha_1+2\alpha_2+3\alpha_3+\alpha_4,~\alpha_1,~\alpha_2$
\item $\alpha_1+2\alpha_2+3\alpha_3+\alpha_4,~\alpha_1,~\alpha_2,~\alpha_4$
\item $\alpha_1+2\alpha_2+3\alpha_3+\alpha_4,~\alpha_1,~\alpha_4$
\item $\alpha_1+2\alpha_2+3\alpha_3+\alpha_4,~\alpha_2+2\alpha_3+2\alpha_4$
\item $\alpha_1+2\alpha_2+3\alpha_3+\alpha_4,~\alpha_2+2\alpha_3+2\alpha_4,~\alpha_2$
\item $\alpha_1+2\alpha_2+3\alpha_3+\alpha_4,~\alpha_2+2\alpha_3+2\alpha_4,~\alpha_2,~\alpha_4$
\item $\alpha_1+2\alpha_2+3\alpha_3+\alpha_4,~\alpha_2+2\alpha_3+2\alpha_4,~\alpha_4$
\item $\alpha_1+2\alpha_2+3\alpha_3+\alpha_4,~\alpha_2$
\item $\alpha_1+2\alpha_2+3\alpha_3+\alpha_4,~\alpha_2,~\alpha_4$
\item $\alpha_1+2\alpha_2+3\alpha_3+\alpha_4,~\alpha_4$
\item $\alpha_1+2\alpha_2+3\alpha_3+2\alpha_4$
\item $\alpha_1+2\alpha_2+3\alpha_3+2\alpha_4,~\alpha_1+2\alpha_2+2\alpha_3$
\item $\alpha_1+2\alpha_2+3\alpha_3+2\alpha_4,~\alpha_1+2\alpha_2+2\alpha_3,~\alpha_1$
\item $\alpha_1+2\alpha_2+3\alpha_3+2\alpha_4,~\alpha_1+\alpha_2+2\alpha_3$
\item $\alpha_1+2\alpha_2+3\alpha_3+2\alpha_4,~\alpha_1+\alpha_2+2\alpha_3,~\alpha_1+\alpha_2$
\item $\alpha_1+2\alpha_2+3\alpha_3+2\alpha_4,~\alpha_1+\alpha_2+2\alpha_3,~\alpha_1+\alpha_2,~\alpha_2$
\item $\alpha_1+2\alpha_2+3\alpha_3+2\alpha_4,~\alpha_1+\alpha_2+2\alpha_3,~\alpha_2$
\item $\alpha_1+2\alpha_2+3\alpha_3+2\alpha_4,~\alpha_1+\alpha_2$
\item $\alpha_1+2\alpha_2+3\alpha_3+2\alpha_4,~\alpha_1+\alpha_2,~\alpha_2+2\alpha_3$
\item $\alpha_1+2\alpha_2+3\alpha_3+2\alpha_4,~\alpha_1$
\item $\alpha_1+2\alpha_2+3\alpha_3+2\alpha_4,~\alpha_1,~\alpha_2+2\alpha_3$
\item $\alpha_1+2\alpha_2+3\alpha_3+2\alpha_4,~\alpha_1,~\alpha_2+2\alpha_3,~\alpha_2$
\item $\alpha_1+2\alpha_2+3\alpha_3+2\alpha_4,~\alpha_1,~\alpha_2$
\item $\alpha_1+2\alpha_2+3\alpha_3+2\alpha_4,~\alpha_2+2\alpha_3$
\item $\alpha_1+2\alpha_2+3\alpha_3+2\alpha_4,~\alpha_2+2\alpha_3,~\alpha_2$
\item $\alpha_1+2\alpha_2+3\alpha_3+2\alpha_4,~\alpha_2$
\item $\alpha_1+2\alpha_2+4\alpha_3+2\alpha_4$
\item $\alpha_1+2\alpha_2+4\alpha_3+2\alpha_4,~\alpha_1+2\alpha_2+2\alpha_3+2\alpha_4$
\item $\alpha_1+2\alpha_2+4\alpha_3+2\alpha_4,~\alpha_1+2\alpha_2+2\alpha_3+2\alpha_4,~\alpha_1+2\alpha_2+2\alpha_3$
\item $\alpha_1+2\alpha_2+4\alpha_3+2\alpha_4,~\alpha_1+2\alpha_2+2\alpha_3+2\alpha_4,~\alpha_1+2\alpha_2+2\alpha_3,~\alpha_1$
\item $\alpha_1+2\alpha_2+4\alpha_3+2\alpha_4,~\alpha_1+2\alpha_2+2\alpha_3+2\alpha_4,~\alpha_1+\alpha_2+\alpha_3$
\item $\alpha_1+2\alpha_2+4\alpha_3+2\alpha_4,~\alpha_1+2\alpha_2+2\alpha_3+2\alpha_4,~\alpha_1+\alpha_2+\alpha_3,~\alpha_1+\alpha_2$
\item $\alpha_1+2\alpha_2+4\alpha_3+2\alpha_4,~\alpha_1+2\alpha_2+2\alpha_3+2\alpha_4,~\alpha_1+\alpha_2+\alpha_3,~\alpha_1+\alpha_2,~\alpha_2$
\item $\alpha_1+2\alpha_2+4\alpha_3+2\alpha_4,~\alpha_1+2\alpha_2+2\alpha_3+2\alpha_4,~\alpha_1+\alpha_2+\alpha_3,~\alpha_2$
\item $\alpha_1+2\alpha_2+4\alpha_3+2\alpha_4,~\alpha_1+2\alpha_2+2\alpha_3+2\alpha_4,~\alpha_1+\alpha_2,~\alpha_2+\alpha_3$
\item $\alpha_1+2\alpha_2+4\alpha_3+2\alpha_4,~\alpha_1+2\alpha_2+2\alpha_3+2\alpha_4,~\alpha_1+\alpha_2$
\item $\alpha_1+2\alpha_2+4\alpha_3+2\alpha_4,~\alpha_1+2\alpha_2+2\alpha_3+2\alpha_4,~\alpha_1$
\item $\alpha_1+2\alpha_2+4\alpha_3+2\alpha_4,~\alpha_1+2\alpha_2+2\alpha_3+2\alpha_4,~\alpha_1,~\alpha_2+\alpha_3$
\item $\alpha_1+2\alpha_2+4\alpha_3+2\alpha_4,~\alpha_1+2\alpha_2+2\alpha_3+2\alpha_4,~\alpha_1,~\alpha_2+\alpha_3,~\alpha_2$
\item $\alpha_1+2\alpha_2+4\alpha_3+2\alpha_4,~\alpha_1+2\alpha_2+2\alpha_3+2\alpha_4,~\alpha_1,~\alpha_2$
\item $\alpha_1+2\alpha_2+4\alpha_3+2\alpha_4,~\alpha_1+2\alpha_2+2\alpha_3+2\alpha_4,~\alpha_2+\alpha_3$
\item $\alpha_1+2\alpha_2+4\alpha_3+2\alpha_4,~\alpha_1+2\alpha_2+2\alpha_3+2\alpha_4,~\alpha_2+\alpha_3,~\alpha_2$
\item $\alpha_1+2\alpha_2+4\alpha_3+2\alpha_4,~\alpha_1+2\alpha_2+2\alpha_3+2\alpha_4,~\alpha_2$
\item $\alpha_1+2\alpha_2+4\alpha_3+2\alpha_4,~\alpha_1+2\alpha_2+2\alpha_3+\alpha_4$
\item $\alpha_1+2\alpha_2+4\alpha_3+2\alpha_4,~\alpha_1+2\alpha_2+2\alpha_3+\alpha_4,~\alpha_1$
\item $\alpha_1+2\alpha_2+4\alpha_3+2\alpha_4,~\alpha_1+2\alpha_2+2\alpha_3$
\item $\alpha_1+2\alpha_2+4\alpha_3+2\alpha_4,~\alpha_1+2\alpha_2+2\alpha_3,~\alpha_1+\alpha_2+\alpha_3+\alpha_4$
\item $\alpha_1+2\alpha_2+4\alpha_3+2\alpha_4,~\alpha_1+2\alpha_2+2\alpha_3,~\alpha_1+\alpha_2+\alpha_3+\alpha_4,~\alpha_1+\alpha_2$
\item $\alpha_1+2\alpha_2+4\alpha_3+2\alpha_4,~\alpha_1+2\alpha_2+2\alpha_3,~\alpha_1+\alpha_2+\alpha_3+\alpha_4,~\alpha_1+\alpha_2,~\alpha_2$
\item $\alpha_1+2\alpha_2+4\alpha_3+2\alpha_4,~\alpha_1+2\alpha_2+2\alpha_3,~\alpha_1+\alpha_2+\alpha_3+\alpha_4,~\alpha_1+\alpha_2,~\alpha_2,~\alpha_4$
\item $\alpha_1+2\alpha_2+4\alpha_3+2\alpha_4,~\alpha_1+2\alpha_2+2\alpha_3,~\alpha_1+\alpha_2+\alpha_3+\alpha_4,~\alpha_1+\alpha_2,~\alpha_4$
\item $\alpha_1+2\alpha_2+4\alpha_3+2\alpha_4,~\alpha_1+2\alpha_2+2\alpha_3,~\alpha_1+\alpha_2+\alpha_3+\alpha_4,~\alpha_2$
\item $\alpha_1+2\alpha_2+4\alpha_3+2\alpha_4,~\alpha_1+2\alpha_2+2\alpha_3,~\alpha_1+\alpha_2+\alpha_3+\alpha_4,~\alpha_2,~\alpha_4$
\item $\alpha_1+2\alpha_2+4\alpha_3+2\alpha_4,~\alpha_1+2\alpha_2+2\alpha_3,~\alpha_1+\alpha_2+\alpha_3+\alpha_4,~\alpha_4$
\item $\alpha_1+2\alpha_2+4\alpha_3+2\alpha_4,~\alpha_1+2\alpha_2+2\alpha_3,~\alpha_1+\alpha_2$
\item $\alpha_1+2\alpha_2+4\alpha_3+2\alpha_4,~\alpha_1+2\alpha_2+2\alpha_3,~\alpha_1+\alpha_2,~\alpha_2+\alpha_3+\alpha_4$
\item $\alpha_1+2\alpha_2+4\alpha_3+2\alpha_4,~\alpha_1+2\alpha_2+2\alpha_3,~\alpha_1+\alpha_2,~\alpha_2+\alpha_3+\alpha_4,~\alpha_4$
\item $\alpha_1+2\alpha_2+4\alpha_3+2\alpha_4,~\alpha_1+2\alpha_2+2\alpha_3,~\alpha_1+\alpha_2,~\alpha_4$
\item $\alpha_1+2\alpha_2+4\alpha_3+2\alpha_4,~\alpha_1+2\alpha_2+2\alpha_3,~\alpha_1$
\item $\alpha_1+2\alpha_2+4\alpha_3+2\alpha_4,~\alpha_1+2\alpha_2+2\alpha_3,~\alpha_1,~\alpha_2+\alpha_3+\alpha_4$
\item $\alpha_1+2\alpha_2+4\alpha_3+2\alpha_4,~\alpha_1+2\alpha_2+2\alpha_3,~\alpha_1,~\alpha_2+\alpha_3+\alpha_4,~\alpha_2$
\item $\alpha_1+2\alpha_2+4\alpha_3+2\alpha_4,~\alpha_1+2\alpha_2+2\alpha_3,~\alpha_1,~\alpha_2+\alpha_3+\alpha_4,~\alpha_2,~\alpha_4$
\item $\alpha_1+2\alpha_2+4\alpha_3+2\alpha_4,~\alpha_1+2\alpha_2+2\alpha_3,~\alpha_1,~\alpha_2+\alpha_3+\alpha_4,~\alpha_4$
\item $\alpha_1+2\alpha_2+4\alpha_3+2\alpha_4,~\alpha_1+2\alpha_2+2\alpha_3,~\alpha_1,~\alpha_2$
\item $\alpha_1+2\alpha_2+4\alpha_3+2\alpha_4,~\alpha_1+2\alpha_2+2\alpha_3,~\alpha_1,~\alpha_2,~\alpha_4$
\item $\alpha_1+2\alpha_2+4\alpha_3+2\alpha_4,~\alpha_1+2\alpha_2+2\alpha_3,~\alpha_1,~\alpha_4$
\item $\alpha_1+2\alpha_2+4\alpha_3+2\alpha_4,~\alpha_1+2\alpha_2+2\alpha_3,~\alpha_2+\alpha_3+\alpha_4$
\item $\alpha_1+2\alpha_2+4\alpha_3+2\alpha_4,~\alpha_1+2\alpha_2+2\alpha_3,~\alpha_2+\alpha_3+\alpha_4,~\alpha_2$
\item $\alpha_1+2\alpha_2+4\alpha_3+2\alpha_4,~\alpha_1+2\alpha_2+2\alpha_3,~\alpha_2+\alpha_3+\alpha_4,~\alpha_2,~\alpha_4$
\item $\alpha_1+2\alpha_2+4\alpha_3+2\alpha_4,~\alpha_1+2\alpha_2+2\alpha_3,~\alpha_2+\alpha_3+\alpha_4,~\alpha_4$
\item $\alpha_1+2\alpha_2+4\alpha_3+2\alpha_4,~\alpha_1+2\alpha_2+2\alpha_3,~\alpha_2$
\item $\alpha_1+2\alpha_2+4\alpha_3+2\alpha_4,~\alpha_1+2\alpha_2+2\alpha_3,~\alpha_2,~\alpha_4$
\item $\alpha_1+2\alpha_2+4\alpha_3+2\alpha_4,~\alpha_1+2\alpha_2+2\alpha_3,~\alpha_4$
\item $\alpha_1+2\alpha_2+4\alpha_3+2\alpha_4,~\alpha_1+\alpha_2+\alpha_3+\alpha_4$
\item $\alpha_1+2\alpha_2+4\alpha_3+2\alpha_4,~\alpha_1+\alpha_2+\alpha_3+\alpha_4,~\alpha_1+\alpha_2$
\item $\alpha_1+2\alpha_2+4\alpha_3+2\alpha_4,~\alpha_1+\alpha_2+\alpha_3+\alpha_4,~\alpha_1+\alpha_2,~\alpha_2+\alpha_3$
\item $\alpha_1+2\alpha_2+4\alpha_3+2\alpha_4,~\alpha_1+\alpha_2+\alpha_3+\alpha_4,~\alpha_1+\alpha_2,~\alpha_2+\alpha_3,~\alpha_2$
\item $\alpha_1+2\alpha_2+4\alpha_3+2\alpha_4,~\alpha_1+\alpha_2+\alpha_3+\alpha_4,~\alpha_1+\alpha_2,~\alpha_2$
\item $\alpha_1+2\alpha_2+4\alpha_3+2\alpha_4,~\alpha_1+\alpha_2+\alpha_3+\alpha_4,~\alpha_2+\alpha_3$
\item $\alpha_1+2\alpha_2+4\alpha_3+2\alpha_4,~\alpha_1+\alpha_2+\alpha_3+\alpha_4,~\alpha_2+\alpha_3,~\alpha_2$
\item $\alpha_1+2\alpha_2+4\alpha_3+2\alpha_4,~\alpha_1+\alpha_2+\alpha_3+\alpha_4,~\alpha_2$
\item $\alpha_1+2\alpha_2+4\alpha_3+2\alpha_4,~\alpha_1+\alpha_2+\alpha_3$
\item $\alpha_1+2\alpha_2+4\alpha_3+2\alpha_4,~\alpha_1+\alpha_2+\alpha_3,~\alpha_1+\alpha_2$
\item $\alpha_1+2\alpha_2+4\alpha_3+2\alpha_4,~\alpha_1+\alpha_2+\alpha_3,~\alpha_1+\alpha_2,~\alpha_2+\alpha_3+\alpha_4$
\item $\alpha_1+2\alpha_2+4\alpha_3+2\alpha_4,~\alpha_1+\alpha_2+\alpha_3,~\alpha_1+\alpha_2,~\alpha_2$
\item $\alpha_1+2\alpha_2+4\alpha_3+2\alpha_4,~\alpha_1+\alpha_2+\alpha_3,~\alpha_1+\alpha_2,~\alpha_2,~\alpha_4$
\item $\alpha_1+2\alpha_2+4\alpha_3+2\alpha_4,~\alpha_1+\alpha_2+\alpha_3,~\alpha_1+\alpha_2,~\alpha_4$
\item $\alpha_1+2\alpha_2+4\alpha_3+2\alpha_4,~\alpha_1+\alpha_2+\alpha_3,~\alpha_2+\alpha_3+\alpha_4$
\item $\alpha_1+2\alpha_2+4\alpha_3+2\alpha_4,~\alpha_1+\alpha_2+\alpha_3,~\alpha_2+\alpha_3+\alpha_4,~\alpha_2$
\item $\alpha_1+2\alpha_2+4\alpha_3+2\alpha_4,~\alpha_1+\alpha_2+\alpha_3,~\alpha_2+\alpha_3+\alpha_4,~\alpha_2,~\alpha_4$
\item $\alpha_1+2\alpha_2+4\alpha_3+2\alpha_4,~\alpha_1+\alpha_2+\alpha_3,~\alpha_2+\alpha_3+\alpha_4,~\alpha_4$
\item $\alpha_1+2\alpha_2+4\alpha_3+2\alpha_4,~\alpha_1+\alpha_2+\alpha_3,~\alpha_2$
\item $\alpha_1+2\alpha_2+4\alpha_3+2\alpha_4,~\alpha_1+\alpha_2+\alpha_3,~\alpha_2,~\alpha_4$
\item $\alpha_1+2\alpha_2+4\alpha_3+2\alpha_4,~\alpha_1+\alpha_2+\alpha_3,~\alpha_4$
\item $\alpha_1+2\alpha_2+4\alpha_3+2\alpha_4,~\alpha_1+\alpha_2$
\item $\alpha_1+2\alpha_2+4\alpha_3+2\alpha_4,~\alpha_1+\alpha_2,~\alpha_2+\alpha_3+\alpha_4$
\item $\alpha_1+2\alpha_2+4\alpha_3+2\alpha_4,~\alpha_1+\alpha_2,~\alpha_2+\alpha_3$
\item $\alpha_1+2\alpha_2+4\alpha_3+2\alpha_4,~\alpha_1+\alpha_2,~\alpha_2+\alpha_3,~\alpha_4$
\item $\alpha_1+2\alpha_2+4\alpha_3+2\alpha_4,~\alpha_1+\alpha_2,~\alpha_4$
\item $\alpha_1+2\alpha_2+4\alpha_3+2\alpha_4,~\alpha_1$
\item $\alpha_1+2\alpha_2+4\alpha_3+2\alpha_4,~\alpha_1,~\alpha_2+\alpha_3+\alpha_4$
\item $\alpha_1+2\alpha_2+4\alpha_3+2\alpha_4,~\alpha_1,~\alpha_2+\alpha_3+\alpha_4,~\alpha_2$
\item $\alpha_1+2\alpha_2+4\alpha_3+2\alpha_4,~\alpha_1,~\alpha_2+\alpha_3$
\item $\alpha_1+2\alpha_2+4\alpha_3+2\alpha_4,~\alpha_1,~\alpha_2+\alpha_3,~\alpha_2$
\item $\alpha_1+2\alpha_2+4\alpha_3+2\alpha_4,~\alpha_1,~\alpha_2+\alpha_3,~\alpha_2,~\alpha_4$
\item $\alpha_1+2\alpha_2+4\alpha_3+2\alpha_4,~\alpha_1,~\alpha_2+\alpha_3,~\alpha_4$
\item $\alpha_1+2\alpha_2+4\alpha_3+2\alpha_4$, $\alpha_1$, $\alpha_2$
\item $\alpha_1+2\alpha_2+4\alpha_3+2\alpha_4$, $\alpha_1$, $\alpha_2$, $\alpha_4$
\item $\alpha_1+2\alpha_2+4\alpha_3+2\alpha_4,~\alpha_1,~\alpha_4$
\item $\alpha_1+2\alpha_2+4\alpha_3+2\alpha_4,~\alpha_2+\alpha_3+\alpha_4$
\item $\alpha_1+2\alpha_2+4\alpha_3+2\alpha_4,~\alpha_2+\alpha_3+\alpha_4,~\alpha_2$
\item $\alpha_1+2\alpha_2+4\alpha_3+2\alpha_4,~\alpha_2+\alpha_3$
\item $\alpha_1+2\alpha_2+4\alpha_3+2\alpha_4,~\alpha_2+\alpha_3,~\alpha_2$
\item $\alpha_1+2\alpha_2+4\alpha_3+2\alpha_4,~\alpha_2+\alpha_3,~\alpha_2,~\alpha_4$
\item $\alpha_1+2\alpha_2+4\alpha_3+2\alpha_4,~\alpha_2+\alpha_3,~\alpha_4$
\item $\alpha_1+2\alpha_2+4\alpha_3+2\alpha_4,~\alpha_2$
\item $\alpha_1+2\alpha_2+4\alpha_3+2\alpha_4,~\alpha_2,~\alpha_4$
\item $\alpha_1+2\alpha_2+4\alpha_3+2\alpha_4,~\alpha_4$
\item $\alpha_1+3\alpha_2+4\alpha_3+2\alpha_4$
\item $\alpha_1+3\alpha_2+4\alpha_3+2\alpha_4,~\alpha_1+\alpha_2+2\alpha_3+2\alpha_4$
\item $\alpha_1+3\alpha_2+4\alpha_3+2\alpha_4,~\alpha_1+\alpha_2+2\alpha_3+2\alpha_4,~\alpha_1+\alpha_2+2\alpha_3$
\item $\alpha_1+3\alpha_2+4\alpha_3+2\alpha_4,~\alpha_1+\alpha_2+2\alpha_3+2\alpha_4,~\alpha_1+\alpha_2+2\alpha_3,~\alpha_1+\alpha_2$
\item $\alpha_1+3\alpha_2+4\alpha_3+2\alpha_4,~\alpha_1+\alpha_2+2\alpha_3+2\alpha_4,~\alpha_1+\alpha_2+2\alpha_3,~\alpha_1+\alpha_2,~\alpha_1$
\item $\alpha_1+3\alpha_2+4\alpha_3+2\alpha_4,~\alpha_1+\alpha_2+2\alpha_3+2\alpha_4,~\alpha_1+\alpha_2+2\alpha_3,~\alpha_1$
\item $\alpha_1+3\alpha_2+4\alpha_3+2\alpha_4,~\alpha_1+\alpha_2+2\alpha_3+2\alpha_4,~\alpha_1+\alpha_2+\alpha_3$
\item $\alpha_1+3\alpha_2+4\alpha_3+2\alpha_4,~\alpha_1+\alpha_2+2\alpha_3+2\alpha_4,~\alpha_1+\alpha_2+\alpha_3,~\alpha_1$
\item $\alpha_1+3\alpha_2+4\alpha_3+2\alpha_4,~\alpha_1+\alpha_2+2\alpha_3+2\alpha_4,~\alpha_1+\alpha_2$
\item $\alpha_1+3\alpha_2+4\alpha_3+2\alpha_4,~\alpha_1+\alpha_2+2\alpha_3+2\alpha_4,~\alpha_1+\alpha_2,~\alpha_1$
\item $\alpha_1+3\alpha_2+4\alpha_3+2\alpha_4,~\alpha_1+\alpha_2+2\alpha_3+2\alpha_4,~\alpha_1+\alpha_2,~\alpha_1,~\alpha_3$
\item $\alpha_1+3\alpha_2+4\alpha_3+2\alpha_4,~\alpha_1+\alpha_2+2\alpha_3+2\alpha_4,~\alpha_1+\alpha_2,~\alpha_3$
\item $\alpha_1+3\alpha_2+4\alpha_3+2\alpha_4,~\alpha_1+\alpha_2+2\alpha_3+2\alpha_4,~\alpha_1$
\item $\alpha_1+3\alpha_2+4\alpha_3+2\alpha_4,~\alpha_1+\alpha_2+2\alpha_3+2\alpha_4,~\alpha_1,~\alpha_3$
\item $\alpha_1+3\alpha_2+4\alpha_3+2\alpha_4,~\alpha_1+\alpha_2+2\alpha_3+2\alpha_4,~\alpha_3$
\item $\alpha_1+3\alpha_2+4\alpha_3+2\alpha_4,~\alpha_1+\alpha_2+2\alpha_3+\alpha_4$
\item $\alpha_1+3\alpha_2+4\alpha_3+2\alpha_4,~\alpha_1+\alpha_2+2\alpha_3+\alpha_4,~\alpha_1+\alpha_2$
\item $\alpha_1+3\alpha_2+4\alpha_3+2\alpha_4,~\alpha_1+\alpha_2+2\alpha_3+\alpha_4,~\alpha_1+\alpha_2,~\alpha_1$
\item $\alpha_1+3\alpha_2+4\alpha_3+2\alpha_4,~\alpha_1+\alpha_2+2\alpha_3+\alpha_4,~\alpha_1$
\item $\alpha_1+3\alpha_2+4\alpha_3+2\alpha_4,~\alpha_1+\alpha_2+2\alpha_3$
\item $\alpha_1+3\alpha_2+4\alpha_3+2\alpha_4,~\alpha_1+\alpha_2+2\alpha_3,~\alpha_1+\alpha_2+\alpha_3+\alpha_4$
\item $\alpha_1+3\alpha_2+4\alpha_3+2\alpha_4,~\alpha_1+\alpha_2+2\alpha_3,~\alpha_1+\alpha_2+\alpha_3+\alpha_4,~\alpha_1$
\item $\alpha_1+3\alpha_2+4\alpha_3+2\alpha_4,~\alpha_1+\alpha_2+2\alpha_3,~\alpha_1+\alpha_2+\alpha_3+\alpha_4,~\alpha_1,~\alpha_4$
\item $\alpha_1+3\alpha_2+4\alpha_3+2\alpha_4,~\alpha_1+\alpha_2+2\alpha_3,~\alpha_1+\alpha_2+\alpha_3+\alpha_4,~\alpha_4$
\item $\alpha_1+3\alpha_2+4\alpha_3+2\alpha_4,~\alpha_1+\alpha_2+2\alpha_3,~\alpha_1+\alpha_2$
\item $\alpha_1+3\alpha_2+4\alpha_3+2\alpha_4,~\alpha_1+\alpha_2+2\alpha_3,~\alpha_1+\alpha_2,~\alpha_1$
\item $\alpha_1+3\alpha_2+4\alpha_3+2\alpha_4,~\alpha_1+\alpha_2+2\alpha_3,~\alpha_1+\alpha_2,~\alpha_1,~\alpha_3+\alpha_4$
\item $\alpha_1+3\alpha_2+4\alpha_3+2\alpha_4,~\alpha_1+\alpha_2+2\alpha_3,~\alpha_1+\alpha_2,~\alpha_1,~\alpha_3+\alpha_4,~\alpha_4$
\item $\alpha_1+3\alpha_2+4\alpha_3+2\alpha_4,~\alpha_1+\alpha_2+2\alpha_3,~\alpha_1+\alpha_2,~\alpha_1,~\alpha_4$
\item $\alpha_1+3\alpha_2+4\alpha_3+2\alpha_4,~\alpha_1+\alpha_2+2\alpha_3,~\alpha_1+\alpha_2,~\alpha_3+\alpha_4$
\item $\alpha_1+3\alpha_2+4\alpha_3+2\alpha_4,~\alpha_1+\alpha_2+2\alpha_3,~\alpha_1+\alpha_2,~\alpha_3+\alpha_4,~\alpha_4$
\item $\alpha_1+3\alpha_2+4\alpha_3+2\alpha_4,~\alpha_1+\alpha_2+2\alpha_3,~\alpha_1+\alpha_2,~\alpha_4$
\item $\alpha_1+3\alpha_2+4\alpha_3+2\alpha_4,~\alpha_1+\alpha_2+2\alpha_3,~\alpha_1$
\item $\alpha_1+3\alpha_2+4\alpha_3+2\alpha_4,~\alpha_1+\alpha_2+2\alpha_3,~\alpha_1,~\alpha_3+\alpha_4$
\item $\alpha_1+3\alpha_2+4\alpha_3+2\alpha_4,~\alpha_1+\alpha_2+2\alpha_3,~\alpha_1,~\alpha_3+\alpha_4,~\alpha_4$
\item $\alpha_1+3\alpha_2+4\alpha_3+2\alpha_4,~\alpha_1+\alpha_2+2\alpha_3,~\alpha_1,~\alpha_4$
\item $\alpha_1+3\alpha_2+4\alpha_3+2\alpha_4,~\alpha_1+\alpha_2+2\alpha_3,~\alpha_3+\alpha_4$
\item $\alpha_1+3\alpha_2+4\alpha_3+2\alpha_4,~\alpha_1+\alpha_2+2\alpha_3,~\alpha_3+\alpha_4,~\alpha_4$
\item $\alpha_1+3\alpha_2+4\alpha_3+2\alpha_4,~\alpha_1+\alpha_2+2\alpha_3,~\alpha_4$
\item $\alpha_1+3\alpha_2+4\alpha_3+2\alpha_4,~\alpha_1+\alpha_2+\alpha_3+\alpha_4$
\item $\alpha_1+3\alpha_2+4\alpha_3+2\alpha_4,~\alpha_1+\alpha_2+\alpha_3+\alpha_4,~\alpha_1$
\item $\alpha_1+3\alpha_2+4\alpha_3+2\alpha_4,~\alpha_1+\alpha_2+\alpha_3+\alpha_4,~\alpha_1,~\alpha_3$
\item $\alpha_1+3\alpha_2+4\alpha_3+2\alpha_4,~\alpha_1+\alpha_2+\alpha_3+\alpha_4,~\alpha_3$
\item $\alpha_1+3\alpha_2+4\alpha_3+2\alpha_4,~\alpha_1+\alpha_2+\alpha_3$
\item $\alpha_1+3\alpha_2+4\alpha_3+2\alpha_4,~\alpha_1+\alpha_2+\alpha_3,~\alpha_1$
\item $\alpha_1+3\alpha_2+4\alpha_3+2\alpha_4,~\alpha_1+\alpha_2+\alpha_3,~\alpha_1,~\alpha_3+\alpha_4$
\item $\alpha_1+3\alpha_2+4\alpha_3+2\alpha_4,~\alpha_1+\alpha_2+\alpha_3,~\alpha_1,~\alpha_3+\alpha_4,~\alpha_4$
\item $\alpha_1+3\alpha_2+4\alpha_3+2\alpha_4,~\alpha_1+\alpha_2+\alpha_3,~\alpha_1,~\alpha_4$
\item $\alpha_1+3\alpha_2+4\alpha_3+2\alpha_4,~\alpha_1+\alpha_2+\alpha_3,~\alpha_3+\alpha_4$
\item $\alpha_1+3\alpha_2+4\alpha_3+2\alpha_4,~\alpha_1+\alpha_2+\alpha_3,~\alpha_3+\alpha_4,~\alpha_4$
\item $\alpha_1+3\alpha_2+4\alpha_3+2\alpha_4,~\alpha_1+\alpha_2+\alpha_3,~\alpha_4$
\item $\alpha_1+3\alpha_2+4\alpha_3+2\alpha_4,~\alpha_1+\alpha_2$
\item $\alpha_1+3\alpha_2+4\alpha_3+2\alpha_4,~\alpha_1+\alpha_2,~\alpha_1$
\item $\alpha_1+3\alpha_2+4\alpha_3+2\alpha_4,~\alpha_1+\alpha_2,~\alpha_1,~\alpha_3+\alpha_4$
\item $\alpha_1+3\alpha_2+4\alpha_3+2\alpha_4,~\alpha_1+\alpha_2,~\alpha_1,~\alpha_3$
\item $\alpha_1+3\alpha_2+4\alpha_3+2\alpha_4,~\alpha_1+\alpha_2,~\alpha_1,~\alpha_3,~\alpha_4$
\item $\alpha_1+3\alpha_2+4\alpha_3+2\alpha_4,~\alpha_1+\alpha_2,~\alpha_1,~\alpha_4$
\item $\alpha_1+3\alpha_2+4\alpha_3+2\alpha_4,~\alpha_1+\alpha_2,~\alpha_3+\alpha_4$
\item $\alpha_1+3\alpha_2+4\alpha_3+2\alpha_4,~\alpha_1+\alpha_2,~\alpha_3$
\item $\alpha_1+3\alpha_2+4\alpha_3+2\alpha_4,~\alpha_1+\alpha_2,~\alpha_3,~\alpha_4$
\item $\alpha_1+3\alpha_2+4\alpha_3+2\alpha_4,~\alpha_1+\alpha_2,~\alpha_4$
\item $\alpha_1+3\alpha_2+4\alpha_3+2\alpha_4,~\alpha_1$
\item $\alpha_1+3\alpha_2+4\alpha_3+2\alpha_4,~\alpha_1,~\alpha_3+\alpha_4$
\item $\alpha_1+3\alpha_2+4\alpha_3+2\alpha_4,~\alpha_1,~\alpha_3$
\item $\alpha_1+3\alpha_2+4\alpha_3+2\alpha_4$, $\alpha_1$, $\alpha_3$, $\alpha_4$
\item $\alpha_1+3\alpha_2+4\alpha_3+2\alpha_4,~\alpha_1,~\alpha_4$
\item $\alpha_1+3\alpha_2+4\alpha_3+2\alpha_4,~\alpha_3+\alpha_4$
\item $\alpha_1+3\alpha_2+4\alpha_3+2\alpha_4,~\alpha_3$
\item $\alpha_1+3\alpha_2+4\alpha_3+2\alpha_4,~\alpha_3,~\alpha_4$
\item $\alpha_1+3\alpha_2+4\alpha_3+2\alpha_4,~\alpha_4$
\item $2\alpha_1+3\alpha_2+4\alpha_3+2\alpha_4$
\item $2\alpha_1+3\alpha_2+4\alpha_3+2\alpha_4,~\alpha_2+2\alpha_3+2\alpha_4$
\item $2\alpha_1+3\alpha_2+4\alpha_3+2\alpha_4,~\alpha_2+2\alpha_3+2\alpha_4,~\alpha_2+2\alpha_3$
\item $2\alpha_1+3\alpha_2+4\alpha_3+2\alpha_4,~\alpha_2+2\alpha_3+2\alpha_4,~\alpha_2+2\alpha_3,~\alpha_2$
\item $2\alpha_1+3\alpha_2+4\alpha_3+2\alpha_4,~\alpha_2+2\alpha_3+2\alpha_4,~\alpha_2+\alpha_3$
\item $2\alpha_1+3\alpha_2+4\alpha_3+2\alpha_4,~\alpha_2+2\alpha_3+2\alpha_4,~\alpha_2$
\item $2\alpha_1+3\alpha_2+4\alpha_3+2\alpha_4,~\alpha_2+2\alpha_3+2\alpha_4,~\alpha_2,~\alpha_3$
\item $2\alpha_1+3\alpha_2+4\alpha_3+2\alpha_4,~\alpha_2+2\alpha_3+2\alpha_4,~\alpha_3$
\item $2\alpha_1+3\alpha_2+4\alpha_3+2\alpha_4,~\alpha_2+2\alpha_3+\alpha_4$
\item $2\alpha_1+3\alpha_2+4\alpha_3+2\alpha_4,~\alpha_2+2\alpha_3+\alpha_4,~\alpha_2$
\item $2\alpha_1+3\alpha_2+4\alpha_3+2\alpha_4,~\alpha_2+2\alpha_3$
\item $2\alpha_1+3\alpha_2+4\alpha_3+2\alpha_4,~\alpha_2+2\alpha_3,~\alpha_2+\alpha_3+\alpha_4$
\item $2\alpha_1+3\alpha_2+4\alpha_3+2\alpha_4,~\alpha_2+2\alpha_3,~\alpha_2+\alpha_3+\alpha_4,~\alpha_4$
\item $2\alpha_1+3\alpha_2+4\alpha_3+2\alpha_4,~\alpha_2+2\alpha_3,~\alpha_2$
\item $2\alpha_1+3\alpha_2+4\alpha_3+2\alpha_4,~\alpha_2+2\alpha_3,~\alpha_2,~\alpha_3+\alpha_4$
\item $2\alpha_1+3\alpha_2+4\alpha_3+2\alpha_4,~\alpha_2+2\alpha_3,~\alpha_2,~\alpha_3+\alpha_4,~\alpha_4$
\item $2\alpha_1+3\alpha_2+4\alpha_3+2\alpha_4,~\alpha_2+2\alpha_3,~\alpha_2,~\alpha_4$
\item $2\alpha_1+3\alpha_2+4\alpha_3+2\alpha_4,~\alpha_2+2\alpha_3,~\alpha_3+\alpha_4$
\item $2\alpha_1+3\alpha_2+4\alpha_3+2\alpha_4,~\alpha_2+2\alpha_3,~\alpha_3+\alpha_4,~\alpha_4$
\item $2\alpha_1+3\alpha_2+4\alpha_3+2\alpha_4,~\alpha_2+2\alpha_3,~\alpha_4$
\item $2\alpha_1+3\alpha_2+4\alpha_3+2\alpha_4,~\alpha_2+\alpha_3+\alpha_4$
\item $2\alpha_1+3\alpha_2+4\alpha_3+2\alpha_4,~\alpha_2+\alpha_3+\alpha_4,~\alpha_3$
\item $2\alpha_1+3\alpha_2+4\alpha_3+2\alpha_4,~\alpha_2+\alpha_3$
\item $2\alpha_1+3\alpha_2+4\alpha_3+2\alpha_4,~\alpha_2+\alpha_3,~\alpha_3+\alpha_4$
\item $2\alpha_1+3\alpha_2+4\alpha_3+2\alpha_4,~\alpha_2+\alpha_3,~\alpha_3+\alpha_4,~\alpha_4$
\item $2\alpha_1+3\alpha_2+4\alpha_3+2\alpha_4,~\alpha_2+\alpha_3,~\alpha_4$
\item $2\alpha_1+3\alpha_2+4\alpha_3+2\alpha_4,~\alpha_2$
\item $2\alpha_1+3\alpha_2+4\alpha_3+2\alpha_4,~\alpha_2,~\alpha_3+\alpha_4$
\item $2\alpha_1+3\alpha_2+4\alpha_3+2\alpha_4,~\alpha_2,~\alpha_3$
\item $2\alpha_1+3\alpha_2+4\alpha_3+2\alpha_4, \alpha_2, \alpha_3, \alpha_4$
\item $2\alpha_1+3\alpha_2+4\alpha_3+2\alpha_4,~\alpha_2,~\alpha_4$
\item $2\alpha_1+3\alpha_2+4\alpha_3+2\alpha_4,~\alpha_3+\alpha_4$
\item $2\alpha_1+3\alpha_2+4\alpha_3+2\alpha_4,~\alpha_3$
\item $2\alpha_1+3\alpha_2+4\alpha_3+2\alpha_4,~\alpha_3,~\alpha_4$
\item $2\alpha_1+3\alpha_2+4\alpha_3+2\alpha_4,~\alpha_4$
\end{enumerate}
\end{multicols}

\scriptsize\begin{verbatim}
import java.util.ArrayList;
import java.util.Scanner;
public class Carriers {
public static void main(String[] args) {new Carr();}}
class Carr{
    Carr(){
        a = new int[25][25];
        m = new int[25][25];
        a[23][10]=24;
        a[22][4]=23; a[22][11]=24;
        a[21][2]=22; a[21][5]=23; a[21][12]=24;
        a[20][1]=21; a[20][3]=22; a[20][6]=23; a[20][13]=24;
        a[19][2]=21; a[19][7]=23; a[19][14]=24;
        a[18][1]=19; a[18][2]=20; a[18][3]=21; a[18][8]=23; a[18][15]=24;
        a[17][1]=18; a[17][3]=20; a[17][9]=23; a[17][16]=24;
        a[16][4]=19; a[16][5]=21; a[16][7]=22;
        a[15][1]=16; a[15][4]=18; a[15][5]=20; a[15][6]=21; a[15][8]=22;
        a[14][1]=15; a[14][4]=17; a[14][6]=20; a[14][9]=22;
        a[13][2]=15; a[13][3]=16; a[13][5]=18; a[13][6]=19; a[13][7]=20; a[13][8]=21;
        a[12][1]=13; a[12][2]=14; a[12][3]=15; a[12][5]=17; a[12][6]=18; a[12][8]=20; a[12][9]=21;
        a[11][2]=12; a[11][3]=13; a[11][7]=17; a[11][8]=18; a[11][9]=19;
        a[10][4]=11; a[10][5]=12; a[10][6]=13; a[10][7]=14; a[10][8]=15; a[10][9]=16;
        a[8][1]=9;
        a[7][1]=8;
        a[6][2]=8; a[6][3]=9;
        a[5][1]=6; a[5][2]=7; a[5][3]=8;
        a[4][2]=5; a[4][3]=6;
        a[2][1]=3;

        m[23][10]=-1;
        m[22][4]=-1; m[22][11]=-1;
        m[21][2]=-2; m[21][5]=-2; m[21][12]=-2;
        m[20][1]=-1; m[20][3]=-2; m[20][6]=-2; m[20][13]=-2;
        m[19][2]=-1; m[19][7]=1; m[19][14]=1;
        m[18][1]=-2; m[18][2]=-1; m[18][3]=-1; m[18][8]=2; m[18][15]=2;
        m[17][1]=1; m[17][3]=1; m[17][9]=1; m[17][16]=1;
        m[16][4]=-1; m[16][5]=1; m[16][7]=1;
        m[15][1]=-2; m[15][4]=-1; m[15][5]=1; m[15][6]=1; m[15][8]=2;
        m[14][1]=1; m[14][4]=-1; m[14][6]=-1; m[14][9]=1;
        m[13][2]=-1; m[13][3]=2; m[13][5]=-1; m[13][6]=2; m[13][7]=-1; m[13][8]=1;
        m[12][1]=1; m[12][2]=-2; m[12][3]=-1; m[12][5]=-2; m[12][6]=-1; m[12][8]=1; m[12][9]=1;
        m[11][2]=1; m[11][3]=1; m[11][7]=-1; m[11][8]=-1; m[11][9]=-1;
        m[10][4]=1; m[10][5]=1; m[10][6]=1; m[10][7]=1; m[10][8]=1; m[10][9]=1;
        m[8][1]=-2;
        m[7][1]=1;
        m[6][2]=-1; m[6][3]=2;
        m[5][1]=1; m[5][2]=-2; m[5][3]=-1;
        m[4][2]=1; m[4][3]=1;
        m[2][1]=1;

        b = new String[25];
        b[1]="a_4"; b[2]="a_3"; b[3]="a_3+a_4"; b[4]="a_2"; b[5]="a_2+a_3";

        b[6]="a_2+a_3+a_4"; b[7]="a_2+2a_3"; b[8]="a_2+2a_3+a_4"; b[9]="a_2+2a_3+2a_4";

        b[10]="a_1"; b[11]="a_1+a_2"; b[12]="a_1+a_2+a_3"; b[13]="a_1+a_2+a_3+a_4";

        b[14]="a_1+a_2+2a_3"; b[15]="a_1+a_2+2a_3+a_4"; b[16]="a_1+a_2+2a_3+2a_4";

        b[17]="a_1+2a_2+2a_3"; b[18]="a_1+2a_2+2a_3+a_4"; b[19]="a_1+2a_2+2a_3+2a_4";

        b[20]="a_1+2a_2+3a_3+a_4"; b[21]="a_1+2a_2+3a_3+2a_4"; b[22]="a_1+2a_2+4a_3+2a_4";

        b[23]="a_1+3a_2+4a_3+2a_4"; b[24]="2a_1+3a_2+4a_3+2a_4";

        c = new double[25][25];
        lambda = new double[25];
        for(int i=1; i<=24; i++) { lambda[i] = 1;}
        roots = new ArrayList<>();
        roots.add(24);
        n = 0;
        depend = new ArrayList<>();
        Scanner in = new Scanner(System.in);
        System.out.println("0 - sufficient condition\n" + "else - necessary condition");
        x = in.nextInt();
        in.close();

        while(true) {
            last = roots.get(roots.size()-1);
            for(int i=24; i>=last; i--) {
                for(int j=1; j<=24; j++) {
                if(roots.contains(a[i][j])||roots.contains(a[j][i]))

                { c[i][j]=(m[i][j]-m[j][i])*lambda[a[i][j]+a[j][i]];}
                else { c[i][j]=0; }}}
            depend.clear();
            q = true;
            while(q) {
                q = false;
                for(int i=24; i>last; i--) {
                    p = true; dep = 0;
                    for(int j=1; j<=24; j++) {
                        if(c[i][j]!=0 && !depend.contains(j)) {
                            if(dep==0) { dep = j; }
                            else { p = false; break; }}}
                    if(dep!=0 && p) {
                        depend.add(dep); q = true; }}}
            rank = true;
            if(x==0) {
                for(int j=1; j<=24; j++) {
                    if(c[last][j]!=0 && !depend.contains(j)) { rank = false; break; }}}
            else {
                rowold = last;
                colold = 0;
                while(true) {
                    count = 0;
                    for(int j=1; j<=24; j++) {
                        if(c[rowold][j]!=0 && !depend.contains(j) && j!=colold) {
                            rank = false; colnew = j;
                            for(int i=24; i>last; i--) {
                                if(c[i][j]!=0 && i!=rowold) {
                                    rank = true; count++; rownew = i; }}
                            if (!rank) { break; }}}
                    if (!rank || count!=1) { break; }
                    else { rowold = rownew; colold = colnew; }}}
            if(rank) {
                n++;
                for(int h: roots) { System.out.print(b[h]+" "); }
                System.out.println();
                if(last!=1) { roots.add(last-1); }}
            else {
                if(last!=1) {
                    roots.remove((Integer) last);
                    roots.add(last-1); }}
            if(last==1) {
                if(roots.size()!=1) {
                    roots.remove((Integer) 1);
                    last = roots.get(roots.size()-1);
                    roots.remove((Integer) last);
                    roots.add(last-1);}
                else { break; }}}
        System.out.println("empty");
        n++;
        System.out.print(n); }
    int x; int[][]a; int[][]m; String[]b; double[][]c; double[]lambda;

    ArrayList<Integer> roots; ArrayList<Integer> depend;

    boolean q; boolean p; int dep; int last; boolean rank; int n; int count; int rowold; int colold;

    int rownew; int colnew;
}
\end{verbatim}

{\small
    
}

\end{document}